\definecolor{uuuuuu}{rgb}{0.26666666666666666,0.26666666666666666,0.26666666666666666}
\definecolor{xdxdff}{rgb}{0.49019607843137253,0.49019607843137253,1.}
\definecolor{ffqqqq}{rgb}{1.,0.,0.}
\newcommand{\degre}{\ensuremath{^\circ}}
\definecolor{uuuuuu}{rgb}{0.26666666666666666,0.26666666666666666,0.26666666666666666}
\definecolor{qqwuqq}{rgb}{0.,0.39215686274509803,0.}
\definecolor{zzttqq}{rgb}{0.6,0.2,0.}
\definecolor{xdxdff}{rgb}{0.49019607843137253,0.49019607843137253,1.}
\definecolor{qqqqff}{rgb}{0.,0.,1.}
\definecolor{cqcqcq}{rgb}{0.7529411764705882,0.7529411764705882,0.7529411764705882}
\theoremstyle{plain}
\newtheorem{conj}[subsection]{Conjecture}
\newtheorem{theorem}[subsubsection]{Theorem}
\newtheorem{lemma}[subsection]{Lemma}
\theoremstyle{definition}
\newtheorem{prop}[subsection]{Proposition}
\newtheorem{cor}[subsubsection]{Corollary}
\newtheorem{remark}[subsection]{Remark}
\newtheorem{note}[subsection]{Note}
\newcommand{\set}[1]{\{#1\}}
\newcommand{\ga}{\alpha}
\newcommand{\gb}{\beta}
\newcommand{\gl}{\lambda}
\newcommand{\gq}{\theta}
\newcommand{\tbf}{\textbf}
\newcommand{\tit}{\textit}
\newcommand{\D}[1]{\mathbb{#1}}
\newcommand{\te}{\text}
\newcommand{\nd}{\noindent}
\newcommand{\tri}{\triangle}
\begin{document}
To appear, Real Analysis Exchange
\title{Optimal quantizers for some absolutely continuous probability measures}

\author{ Mrinal Kanti Roychowdhury}
\address{School of Mathematical and Statistical Sciences\\
University of Texas Rio Grande Valley\\
1201 West University Drive\\
Edinburg, TX 78539-2999, USA.}
\email{mrinal.roychowdhury@utrgv.edu}

\subjclass[2010]{60Exx, 62Exx, 94A34.}
\keywords{Uniform and nonuniform distributions, optimal quantizers, quantization error}
\thanks{The research of the author was supported by U.S. National Security Agency (NSA) Grant H98230-14-1-0320}

\date{}
\maketitle

\pagestyle{myheadings}\markboth{Mrinal Kanti Roychowdhury}{Optimal quantizers for some absolutely continuous probability measures}

\begin{abstract}
The representation of a given quantity with less information is often referred to as `quantization' and it is an important subject in information theory. In this paper, we have considered absolutely continuous probability measures on unit discs, squares, and the real line. For these probability measures the optimal sets of $n$-means and the $n$th quantization errors are calculated for some positive integers $n$.
\end{abstract}

\section{Introduction}

Quantization is a nonlinear, zero-memory operation of converting a continuous signal
into a discrete signal that assumes only a finite number of levels. Quantization
occurs whenever physical quantities are represented numerically. W.F. Sheppard is the first person who studied a system of quantization (see \cite{S}). It has broad applications in signal processing, telecommunications, data compression, image processing and cluster analysis. For some details and comprehensive lists of references one can see \cite{GG, GKL, GN, Z}. Rigorous mathematical treatment of the quantization theory is given in Graf-Luschgy's book (see \cite{GL}).

The quantization of a probability distribution refers to the idea of estimating a given probability by a discrete probability with a given number $n$ of supporting points. Let $P$ denote a Borel probability measure on $\D R^d$ and let $\|\cdot\|$ denote the Euclidean norm on $\D R^d$ for any $d\geq 1$.  The $n$th \textit{quantization
error} for $P$ (of order $2$) is defined by
\begin{equation*} \label{eq1} V_n:=V_n(P)=\inf \Big\{\int \min_{a\in\alpha} \|x-a\|^2 dP(x) : \alpha \subset \mathbb R^d, \text{ card}(\alpha) \leq n \Big\},\end{equation*}
where the infimum is taken over all subsets $\alpha$ of $\mathbb R^d$ with card$(\alpha)\leq n$ for $n\geq 1$. If $\int \| x\|^2 dP(x)<\infty$, then there is some set $\alpha$ for
which the infimum is achieved (see \cite{GL}). A set $\alpha$ for which the infimum is achieved, i.e.,
$V_n=\int \min_{a \in \ga} \|x-a\|^2 dP(x),$ is called an \textit{optimal set of $n$-means}. Elements of an optimal set of $n$-means are called \tit{optimal quantizers}. In some literature it is also refereed to as \tit{principal points} (see \cite{MKT}, and the references therein). For a finite set $\ga$, the error $\int \min_{a \in \ga} \|x-a\|^2 dP(x)$ is often referred to as the \tit{cost} or \tit{distortion error} for $\ga$, and is denoted by $V(P; \ga)$. Thus, $V_n:=V_n(P)=\inf\set{V(P; \ga) :\alpha \subset \mathbb R^d, \text{ card}(\alpha) \leq n}$. It is known that for a continuous probability measure $P$ an optimal set of $n$-means always has exactly $n$ elements (see \cite{GL}). In this paper, we consider both uniform and nonuniform continuous probability distributions, i.e., the probability measures $P$ considered in this paper are absolutely continuous with respect to the Lebesgue measure $\gl$, i.e., for any Borel subset $B$ of $\D R^d$, we have $P(B)=\int_B f(x) d\gl(x)$, where $f$ is the density function, known as Radon-Nikodym derivative of $P$ with respect to $\gl$.
For a finite subset $\ga$ of $\D R^d$ the \tit{Voronoi region} generated by an element $a\in \ga$ is the set of all elements in $\D R^d$ which are nearest $a$, and is denoted by $M(a|\ga)$, i.e.,
\[M(a|\ga)=\set{x \in \D R^d : \|x-a\|=\min_{b \in \ga}\|x-b\|}.\]
The set $\set{M(a|\ga) : a \in \ga}$ is called the \tit{Voronoi diagram} or \tit{Voronoi tessellation} of $\D R^d$ with respect to the set $\ga$. Let us now state the following proposition  (see \cite[Section~4.1]{GL} and \cite[Chapter~6 and Chapter~11]{GG}).
\begin{prop} \label{prop10}
Let $\ga$ be an optimal set of $n$-means for a Borel probability measure $P$ on $\D R^d$. Let $a \in \ga$, and $M (a|\ga)$ be the Voronoi region generated by $a\in \ga$.
Then, for every $a \in\alpha$, $(i)$ $P(M(a|\ga))>0$, $(ii)$ $ P(\partial M(a|\ga))=0$, $(iii)$ $a=E(X : X \in M(a|\ga))$, and $(iv)$ $P$-almost surely the set $\set{M(a|\ga) : a \in \ga}$ forms a Voronoi partition of $\D R^d$.
\end{prop}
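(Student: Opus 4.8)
The plan is to establish the four assertions in the order (i), (ii), (iv), (iii), since the centroid condition (iii) reads most cleanly once the cells are known to tile $\D R^d$ modulo a $P$-null set. \emph{For (i)}, I argue by contradiction via a replacement. Suppose $P(M(a|\ga))=0$ for some $a\in\ga$, and put $\gb=\ga\setminus\set a$, which has at most $n-1$ points (so $n\ge 2$, since for $n=1$ one has $M(a|\ga)=\D R^d$). On $\D R^d\setminus M(a|\ga)$ the point $a$ is never a nearest point of $\ga$, so, using $P(M(a|\ga))=0$, $\int\min_{b\in\gb}\|x-b\|^2\,dP=\int_{\D R^d\setminus M(a|\ga)}\min_{c\in\ga}\|x-c\|^2\,dP=V_n$. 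Because $P$ is continuous its support is infinite, so there is a point $p\in\te{supp}(P)$ with $p\nin\gb$; with $\eta=\tfrac12\te{dist}(p,\gb)>0$ every $x\in B(p,\eta)$ is strictly closer to $p$ than to any point of $\gb$, while $P(B(p,\eta))>0$. Hence $V(P;\gb\uu\set p)<\int\min_{b\in\gb}\|x-b\|^2\,dP=V_n$, and $\gb\uu\set p$ has at most $n$ points, contradicting $V_n=\inf\set{V(P;\gamma):\te{card}(\gamma)\le n}$. Therefore $P(M(a|\ga))>0$.

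\emph{For (ii) and (iv)}, note that $M(a|\ga)=\II_{b\in\ga,\,b\neq a}\set{x:\|x-a\|\le\|x-b\|}$ is an intersection of finitely many closed half-spaces, so $\pa M(a|\ga)$ is contained in the finite union of the hyperplanes $\set{x:2\la x,\,b-a\ra=\|b\|^2-\|a\|^2}$, $b\in\ga\setminus\set a$. As $P$ is absolutely continuous with respect to $\gl$, each such hyperplane is $P$-null, so $P(\pa M(a|\ga))=0$, which is (ii). (For a general Borel $P$ this step fails, and one must instead rule out a positive-mass tie-set by shifting it between two neighbouring cells to obtain two incompatible conditional-mean conditions on one quantizer; that refinement is not needed here.) For (iv), set $N=\UU_{a\in\ga}\pa M(a|\ga)$, a $P$-null set. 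Every $x\nin N$ has a \emph{unique} nearest point of $\ga$ and therefore lies in exactly one cell $M(a|\ga)$, in fact in its interior, while $\UU_{a\in\ga}M(a|\ga)=\D R^d$ always. Hence off $N$ the cells $\set{M(a|\ga):a\in\ga}$ are pairwise disjoint and cover $\D R^d$, i.e.\ $P$-almost surely they form a Voronoi partition.

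\emph{For (iii)}, the previous step gives $\D R^d=N\uu\bigsqcup_{a\in\ga}\te{int}\,M(a|\ga)$ with $P(N)=0$, so $V_n=V(P;\ga)=\sum_{a\in\ga}\int_{\te{int}\,M(a|\ga)}\|x-a\|^2\,dP$. For any points $(b_a)_{a\in\ga}$ one has $\sum_{a\in\ga}\int_{\te{int}\,M(a|\ga)}\|x-b_a\|^2\,dP\ge\int\min_{a\in\ga}\|x-b_a\|^2\,dP\ge V_n$, since $\set{b_a:a\in\ga}$ has at most $n$ points; thus $(a)_{a\in\ga}$ minimizes this sum, and as the sum decouples over the index $a$, each $a$ minimizes $b\mapsto\int_{\te{int}\,M(a|\ga)}\|x-b\|^2\,dP$. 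Writing $\gq_a=\tfrac1{P(M(a|\ga))}\int_{M(a|\ga)}x\,dP=E(X:X\in M(a|\ga))$ — legitimate since $P(M(a|\ga))>0$ by (i) and $P(\pa M(a|\ga))=0$ by (ii) — the parallel-axis identity $\int_{M(a|\ga)}\|x-b\|^2\,dP=\int_{M(a|\ga)}\|x-\gq_a\|^2\,dP+P(M(a|\ga))\,\|\gq_a-b\|^2$ shows the unique minimizer is $b=\gq_a$. Therefore $a=E(X:X\in M(a|\ga))$.

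The substantive point is (iii): optimality of $\ga$ only says that moving a quantizer \emph{together with} its Voronoi cell cannot help, whereas the centroid condition concerns minimizing over a \emph{fixed} region; the device of comparing $\sum_a\int_{A_a}\|x-b_a\|^2\,dP$ with $\int\min_a\|x-b_a\|^2\,dP$ bridges this gap and renders the cell boundaries irrelevant, and it is precisely here that (i) (to divide by $P(M(a|\ga))$) and (ii) (to identify the cost with an integral over a genuine partition) are consumed. Everything else reduces to the parallel-axis identity and the elementary geometry of Voronoi polyhedra, so I expect no further obstacle.
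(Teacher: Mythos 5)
Your proof is correct, but there is nothing in the paper to measure it against: the paper does not prove Proposition \ref{prop10} at all, it simply states it with a citation to Graf--Luschgy (Section 4.1) and Gersho--Gray. What you have written is essentially the standard argument from that reference, organized sensibly: (i) by deleting the centre of a mass-zero cell and inserting a fresh point of $\te{supp}(P)$ (your appeal to continuity of $P$ is what guarantees the support is infinite, so such a point exists and every ball around it has positive mass); (iv) by observing that a point with two nearest centres lies on a bisecting hyperplane and hence outside every cell's interior; (iii) by the decoupling inequality comparing $\sum_{a}\int_{M(a|\ga)}\|x-b_a\|^2\,dP$ with $V(P;\set{b_a:a\in\ga})\ge V_n$, followed by the parallel-axis identity --- this is exactly the point where (i) and (ii) get consumed, as you say. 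The one place you genuinely depart from the statement's literal generality is (ii): the hyperplane argument needs $P\ll\gl$ (and your proof of (i) needs nonatomicity), whereas the proposition is stated for an arbitrary Borel $P$, for which (ii) requires Graf--Luschgy's more delicate mass-shifting argument; you flag this correctly, and since every measure in this paper is absolutely continuous your proof covers everything the paper actually uses. The only hypothesis you leave tacit is $\int\|x\|^2\,dP<\infty$, which is needed for the centroids and the parallel-axis identity to make sense and is part of the standing assumptions under which optimal sets exist.
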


\begin{remark}\label{rem1} With respect to a uniform distribution defined on a region with uniform density, the points in an optimal set are the mass centers, also known as centroids, of their own Voronoi regions.
\end{remark}

In \cite{DR}, Dettmann and Roychowdhury considered a uniform distribution on an equilateral triangle, and investigated the optimal sets of $n$-means and the $n$th quantization errors for this distribution for all $n\geq 1$. They first showed that the  Voronoi regions generated by the two points in an optimal set of two-means partition the equilateral triangle into an isosceles trapezoid and an equilateral triangle in the Golden ratio. Then, by mathematical calculation they determined the optimal sets of three- and four-means.  As the number of points increases, so does the number of algebraic equations to be solved. So, they applied a numerical search algorithm that makes random shifts to the point locations, accepting better configurations, and gradually decreasing the shift amplitude in the absence of improvement.  They presented the results of this numerical search
for $n\leq 21$ points and gave conjectures about the optimal configurations for $n$ points, a bound on the quantization errors for $n\to\infty$, and a final conjecture about uniform distributions in more general geometries.

In this paper, due to calculation simplicity, in Section~1 we determine the optimal sets of $n$-means  and the $n$th quantization errors for all $1\leq n\leq 6$ for a uniform distribution on a unit disc.  In Section~2, we determine them for a uniform distribution on a unit square. In Section~3, we determine the optimal sets of $n$-means and the $n$th quantization errors for all $1\leq n\leq 4$ for a nonuniform distribution on $\D R$ supported by the closed interval $[0, 1]$. The technique of this paper can be used to investigate the optimal quantization for many other uniform and nonuniform probability distributions defined on a region.

\section{Optimal sets and quantization error for uniform distribution on a disc}

In this section, we first give some basic results relating to optimal sets and a uniform distribution defined on a unit disc with uniform density. Then, we determine the optimal sets of $n$-means and the $n$th quantization errors with respect to the uniform distribution for all $1\leq n\leq 6$.

\subsection{Basic results}
Let $X:=(X_1, X_2)$ be a bivariate continuous random variable with uniform distribution taking values on the unit disc with center at the origin. Then, the probability density function (pdf) $f(x_1, x_2)$  of the random variable $X$ is given by
\[f(x_1, x_2)=\left\{\begin{array}{ccc}
\frac{1}{\pi} & \te{ for }  x_1^2+x_2^2 \leq 1, \\
\ 0  & \te{ otherwise}.
\end{array}\right.
\]
Notice that the pdf satisfies the following two necessary conditions:

$(i)$ $f(x_1, x_2)\geq 0$ for all $(x_1, x_2) \in \D R^2$, \te{ and }

$(ii)$ $\iint_{\D R^2} f(x_1, x_2)\, dx_1 dx_2=\int_{r=0}^1 \int_{\gq=0}^{2 \pi}f(r \cos \gq, r\sin \gq)\, r dr d\gq =1.$

Let $f_1(x_1)$ and $f_2(x_2)$ represent the marginal pdfs of the random variables $X_1$ and $X_2$, respectively. Then, following the definitions in Probability Theory, we have
\[f_1(x_1)=\int_{-\infty}^\infty f(x_1, x_2) \,dx_2 \te{ and } f_2(x_2)=\int_{-\infty}^\infty f(x_1, x_2) \,dx_1.\]
Since for $-1\leq  x_1\leq 1$,
\[\int_{-\sqrt{1-x_1^2}}^{\sqrt{1-x_1^2}} f(x_1, x_2)\, dx_2=\frac{2 \sqrt{1-x_1^2}}{\pi },\]
we have
\[f_1(x_1)=\left\{\begin{array}{cc} \frac{2 \sqrt{1-x_1^2}}{\pi } & \te{ for } -1\leq x_1\leq 1,\\
\
0   & \te{ otherwise}.
\end{array}\right.
\]
Similarly, we can write
\[f_2(x_2)=\left\{\begin{array}{cc} \frac{2 \sqrt{1-x_2^2}}{\pi } & \te{ for } -1\leq x_2\leq 1,\\
\
0   & \te{ otherwise}.
\end{array}\right.
\]
Notice that both $f_1(x_1)$ and $f_2(x_2)$ satisfy the necessary conditions for pdfs: $f_1(x_1)\geq 0$, $f_2(x_2)\geq 0$ for all $x_1, x_2\in \D R$, and
\[\int_{-\infty}^\infty f_1(x_1)\,dx_1=1=\int_{-\infty}^\infty f_2(x_2)\,dx_2.\]
For the random variable $X$, let $E(X)$  and $V(X)$ represent the expected vector and the expected squared distance of $X$. On the other hand, for $i=1, 2$, by $E(X_i)$ and $V(X_i)$ we denote the expectation and the variance of $X_i$, respectively. Let $i$ and $j$ be the unit vectors in the positive directions of $x_1$- and $x_2$-axes, respectively. By the position vector $\tilde a$ of a point $A$, it is meant that $\overrightarrow{OA}=\tilde a$. In the sequel, we will identify the position vector of a point $(a_1, a_2)$ by $(a_1, a_2):=a_1 i +a_2 j$, and apologize for any abuse in notation. For any two vectors $\vec u$ and $\vec v$, let $\vec u \cdot \vec v$ denote the dot product between the two vectors $\vec u$ and $\vec v$. Then, for any vector $\vec v$, by $(\vec v)^2$, we mean $(\vec v)^2:= \vec v\cdot \vec v$. Thus, $|\vec v|:=\sqrt{\vec v\cdot \vec v}$, which is called the length of the vector $\vec v$. For any two position vectors $\tilde a:=( a_1, a_2)$ and $\tilde b:=( b_1, b_2)$, we write $\rho(\tilde a, \tilde b):=(( a_1-b_1, a_2-b_2))^2=(a_1-b_1)^2 +(a_2-b_2)^2$.

Let us now prove the following lemma.
\begin{lemma}
Let $X:=(X_1, X_2)$ be a bivariate continuous random variable with uniform distribution taking values on the unit disc with center $(0, 0)$. Then,
\[E(X)=(0, 0) \te{ and } V(X)=V(X_1)+V(X_2)=\frac 1 {2}.\]
\end{lemma}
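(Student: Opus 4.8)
The plan is to exploit the rotational symmetry of the disc together with the constancy of the density, and then to reduce the whole statement to a single elementary integral. First I would record the general identity $V(X)=E(\|X-E(X)\|^2)$: once we know that $E(X)=(0,0)$, this becomes $V(X)=E(\|X\|^2)=E(X_1^2+X_2^2)=E(X_1^2)+E(X_2^2)$, while at the same time $V(X_i)=E(X_i^2)-(E(X_i))^2=E(X_i^2)$ for $i=1,2$. Combining these two observations yields the asserted equality $V(X)=V(X_1)+V(X_2)$ with no further work, so everything comes down to (a) verifying $E(X)=(0,0)$ and (b) evaluating one second moment.

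For (a), I would use that the marginal density $f_1$ computed just above is an even function of $x_1$, so that $x_1 f_1(x_1)$ is odd on $[-1,1]$ and hence $E(X_1)=\int_{-1}^{1}x_1 f_1(x_1)\,dx_1=0$; applying the same argument to $f_2$ gives $E(X_2)=0$, so $E(X)=(0,0)$.

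For (b), the cleanest route is to pass to polar coordinates in $\D R^2$ directly, using condition $(ii)$ on the density, so that
\[E(X_1^2+X_2^2)=\frac{1}{\pi}\int_{\gq=0}^{2\pi}\int_{r=0}^{1} r^2\cdot r\,dr\,d\gq=\frac{1}{\pi}\cdot 2\pi\cdot\frac14=\frac12.\]
As an independent check one can instead work with the marginals: $E(X_1^2)=\int_{-1}^{1}x_1^2\,\frac{2\sqrt{1-x_1^2}}{\pi}\,dx_1$, and the substitution $x_1=\sin\gq$ turns this into $\frac{2}{\pi}\int_{-\pi/2}^{\pi/2}\sin^2\gq\cos^2\gq\,d\gq=\frac14$, and likewise $E(X_2^2)=\frac14$, whence $V(X)=\frac14+\frac14=\frac12$. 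There is essentially no serious obstacle here; the only points that need care are keeping the normalizing constant $\frac1\pi$ and the Jacobian factor $r$ straight in the polar computation, and noting that the reformulation in terms of the marginal variances genuinely uses the vanishing of $E(X_1)$ and $E(X_2)$ established in step (a). I would present the polar-coordinate evaluation as the main line of argument and keep the marginal computation as a verification.
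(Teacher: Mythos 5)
Your proposal is correct, and its logical skeleton coincides with the paper's: establish $E(X)=(0,0)$, reduce $V(X)$ to second moments, and evaluate. The one genuine difference is in how the second moment is computed. The paper works entirely through the marginal densities, evaluating $E(X_1^2)=\int_{-1}^1 x_1^2\,\frac{2\sqrt{1-x_1^2}}{\pi}\,dx_1=\frac14$ (and likewise for $X_2$) and then summing, whereas your main line passes to polar coordinates on the joint density and gets $E(X_1^2+X_2^2)=\frac1\pi\int_0^{2\pi}\int_0^1 r^3\,dr\,d\gq=\frac12$ in one stroke; you relegate the marginal computation to a cross-check, which is exactly the paper's argument. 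The polar route is slightly cleaner here because it exploits the rotational symmetry of the disc and avoids the trigonometric substitution, though it yields only the sum $E(X_1^2)+E(X_2^2)$ rather than the individual values $V(X_1)=V(X_2)=\frac14$, which the paper records and reuses later (e.g.\ in Corollary~\ref{cor1}); your cross-check supplies those as well, so nothing is lost. Your explicit appeal to oddness of $x_1f_1(x_1)$ for $E(X_1)=0$ is just the reason behind the paper's unexplained evaluation of the same integral.
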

\begin{proof}
We have
\begin{align*}
E(X_1)& =\int_{-\infty}^\infty x_1 f_1(x_1)\, dx_1=\int_{-1}^1 \frac{x_1 (2 \sqrt{1-x_1^2})}{\pi } \, dx_1=0,\\
E(X_2) &=\int_{-\infty}^\infty x_2 f_2(x_2)\, dx_2=\int_{-1}^1 \frac{x_2(2 \sqrt{1-x_2^2})}{\pi } \, dx_2=0,
\end{align*}
and so,
\[E(X)=\iint(x_1 i+x_2 j) f(x_1, x_2) dx_1dx_2 =i \int x_1 f_1(x_1) dx_1+j \int x_2 f_2(x_2) dx_2=0 i +0 j=(0, 0).\]
Again, \begin{align*}
E(X_1^2)& =\int_{-\infty}^\infty x_1^2 f_1(x_1)\, dx_1=\int_{-1}^1 \frac{x_1^2 (2 \sqrt{1-x_1^2})}{\pi } \, dx_1=\frac 1 4,\\
E(X_2^2)&=\int_{-\infty}^\infty x_2^2 f_2(x_2)\, dx_2=\int_{-1}^1 \frac{x_2^2 (2 \sqrt{1-x_2^2})}{\pi } \, dx_2=\frac 1 4,
\end{align*}
and so,
\[V(X_1)=E(X_1^2)-[E(X_1)]^2=\frac{1}{4} \te{ and } V(X_2)=E(X_2^2)-[E(X_2)]^2=\frac{1}{4}. \]
Thus, we have
\[V(X)=E\|X-E(X)\|^2=\iint \Big((x_1-E(X_1))^2 +(x_2-E(X_2))^2\Big)f(x_1, x_2)\, dx_1 dx_2,\]
implying
\[V(X)=\int (x_1-E(X_1))^2f_1(x_1)\,dx_1+\int (x_2-E(X_2))^2f_2(x_2)\,dx_2=V(X_1)+V(X_2)=\frac{1}{2}.\]
Hence, the lemma is yielded.
\end{proof}
\begin{cor} \label{cor1} Since $E(X_1)=0$ and $E(X_2)=0$, for any two real numbers $a$ and $b$, we have $ E(X_1-a)^2 =E(X_1^2)+a^2=V(X_1)+a^2$, and similarly $E(X_2-b)^2=V(X_2)+b^2$.
Thus, for any $(a, b) \in \D R^2$, we have \begin{align*} &E\|X-(a, b)\|^2\\
&=\iint_{\D R^2} [(x_1-a)^2+(x_2-b)^2]f(x_1, x_2)dx_1dx_2=\int_{\D R} (x_1-a)^2 f_1(x_1) dx_1+\int_{\D R} (x_2-b)^2f_2(x_2) dx_2\\
&=E(X_1-a)^2 +E(X_2-b)^2=V(X_1)+V(X_2)+a^2+b^2=\frac 1 {2}+a^2+b^2.
\end{align*}

\end{cor}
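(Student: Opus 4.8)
The plan is to reduce everything to the one-variable moments already recorded in the Lemma, using only linearity of expectation and the vanishing of the first moments $E(X_1)=E(X_2)=0$.

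First I would handle the one-dimensional identities. Expanding the square gives $E(X_1-a)^2=E(X_1^2)-2aE(X_1)+a^2$; since the Lemma (via the marginal computation with $f_1$) shows $E(X_1)=0$, the middle term vanishes and $E(X_1-a)^2=E(X_1^2)+a^2$. Because $V(X_1)=E(X_1^2)-[E(X_1)]^2=E(X_1^2)$, this is the same as $V(X_1)+a^2$. The argument for $E(X_2-b)^2=V(X_2)+b^2$ is identical, with $f_2$ replacing $f_1$.

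Next I would pass to the plane. Writing $\|X-(a,b)\|^2=(X_1-a)^2+(X_2-b)^2$ and integrating against $f(x_1,x_2)$, Fubini's theorem (applicable since the integrand is bounded and supported on the compact unit disc) lets me integrate out the unused variable in each summand: $\iint_{\D R^2}(x_1-a)^2 f(x_1,x_2)\,dx_1dx_2=\int_{\D R}(x_1-a)^2 f_1(x_1)\,dx_1=E(X_1-a)^2$, and similarly the second term equals $E(X_2-b)^2$. Adding the two one-dimensional results and invoking $V(X_1)+V(X_2)=\frac12$ from the Lemma gives $E\|X-(a,b)\|^2=\frac12+a^2+b^2$.

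There is essentially no obstacle here: the computation is purely formal once the first and second moments of the marginals are in hand. The only point deserving a word of justification is that the double integral splits into a sum of single integrals against $f_1$ and $f_2$ — but this uses only Fubini, not independence of $X_1$ and $X_2$ (which in fact fails for the uniform law on the disc), since the integrand is already a sum in which each term depends on a single coordinate.
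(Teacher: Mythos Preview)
Your proposal is correct and follows essentially the same route as the paper: expand the square, use $E(X_1)=E(X_2)=0$ to drop the cross terms, then split the double integral into the two marginal integrals and invoke $V(X_1)+V(X_2)=\tfrac12$ from the preceding Lemma. Your added remark that only Fubini (not independence of $X_1$ and $X_2$) is needed for the splitting is a welcome clarification, but otherwise there is nothing to distinguish the two arguments.
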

\begin{note}
From Corollary~\ref{cor1} it is clear that the optimal set of one-mean consists of the expected vector $(0, 0)$ of the random variable $X$, which is the center of the disc, and the corresponding quantization error is  $\frac 1 {2}$ which is the expected squared distance of the random variable $X$.
\end{note}

Let us now give the following lemma.

\begin{lemma} \label{lemma12}
For the uniform distribution on the unit disc with center at the origin let $g(\gq_1, \gq_2)$ be the position vector of the centroid of the sector which makes a central angle of $(\gq_2-\gq_1)$ radians, and let $V(\gq_1, \gq_2)$ be the distortion error of the sector with respect to the centroid. Then,
\begin{align*}
g(\gq_1, \gq_2)&=\Big(\frac{2 (\sin \theta_1-\sin\theta_2)}{3 (\theta _1-\theta _2)},-\frac{2 (\cos \theta _1-\cos \theta _2)}{3 (\theta _1-\theta _2)}\Big) \te{ and } \\
V(\gq_1, \gq_2)&=\frac{9 \left(\theta _2-\theta _1\right){}^2-32 \sin ^2\left(\frac{1}{2} \left(\theta _2-\theta _1\right)\right)}{36 \pi  \left(\theta _2-\theta _1\right)}.
\end{align*}
\end{lemma}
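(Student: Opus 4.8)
The plan is to do everything in polar coordinates. Write the sector as $S=\set{(r\cos\gq,\ r\sin\gq):0\le r\le 1,\ \gq_1\le\gq\le\gq_2}$; on $S$ the density is the constant $\frac1\pi$. First I would record the mass of the sector, $P(S)=\frac1\pi\int_{\gq_1}^{\gq_2}\!\int_0^1 r\,dr\,d\gq=\frac{\gq_2-\gq_1}{2\pi}$. Next, using $\int_0^1 r^2\,dr=\frac13$, $\int_{\gq_1}^{\gq_2}\cos\gq\,d\gq=\sin\gq_2-\sin\gq_1$ and $\int_{\gq_1}^{\gq_2}\sin\gq\,d\gq=\cos\gq_1-\cos\gq_2$, I compute the two first moments
\[\frac1\pi\iint_S x_1\,dx_1dx_2=\frac{\sin\gq_2-\sin\gq_1}{3\pi},\qquad \frac1\pi\iint_S x_2\,dx_1dx_2=\frac{\cos\gq_1-\cos\gq_2}{3\pi}.\]
Dividing each of these by $P(S)$ gives the two coordinates of the centroid $g(\gq_1,\gq_2)$; rewriting $\frac{2(\sin\gq_2-\sin\gq_1)}{3(\gq_2-\gq_1)}=\frac{2(\sin\gq_1-\sin\gq_2)}{3(\gq_1-\gq_2)}$ and similarly for the second coordinate reproduces the stated form.

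For the distortion error I would use the identity that holds precisely because $g:=g(\gq_1,\gq_2)$ is the mass center of $S$: expanding $\|(x_1,x_2)-g\|^2$ and using $\frac1\pi\iint_S (x_1,x_2)\,dx_1dx_2=P(S)\,g$ gives
\[V(\gq_1,\gq_2)=\frac1\pi\iint_S (x_1^2+x_2^2)\,dx_1dx_2-P(S)\,\|g\|^2.\]
The first term is immediate in polar coordinates: $\frac1\pi\int_{\gq_1}^{\gq_2}\!\int_0^1 r^3\,dr\,d\gq=\frac{\gq_2-\gq_1}{4\pi}$.

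It remains to simplify $\|g\|^2=\bar x^2+\bar y^2$. The one piece of trigonometry needed is
\[(\sin\gq_2-\sin\gq_1)^2+(\cos\gq_2-\cos\gq_1)^2=2-2\cos(\gq_2-\gq_1)=4\sin^2\!\Big(\tfrac12(\gq_2-\gq_1)\Big),\]
which gives $\|g\|^2=\dfrac{16\sin^2\!\big(\frac12(\gq_2-\gq_1)\big)}{9(\gq_2-\gq_1)^2}$, hence $P(S)\,\|g\|^2=\dfrac{8\sin^2\!\big(\frac12(\gq_2-\gq_1)\big)}{9\pi(\gq_2-\gq_1)}$. Substituting into the displayed formula for $V(\gq_1,\gq_2)$ and collecting over the common denominator $36\pi(\gq_2-\gq_1)$ yields $\dfrac{9(\gq_2-\gq_1)^2-32\sin^2\!\big(\frac12(\gq_2-\gq_1)\big)}{36\pi(\gq_2-\gq_1)}$, as claimed.

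I do not expect a genuine obstacle here: the argument is a routine change to polar coordinates together with the single half-angle identity above. The two places that call for a little care are the sign bookkeeping in the centroid (so that the answer appears in terms of $\gq_1-\gq_2$, matching the statement) and the use of the mass-center identity for the distortion rather than minimizing over all base points — the latter being legitimate because, as in Proposition~\ref{prop10}$(iii)$ and Corollary~\ref{cor1}, the centroid is exactly the minimizer of $(a,b)\mapsto\frac1\pi\iint_S\|(x_1,x_2)-(a,b)\|^2\,dx_1dx_2$.
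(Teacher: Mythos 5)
Your proof is correct, and the centroid computation is exactly what the paper does: polar coordinates, first moments divided by the mass of the sector. Where you diverge is in the distortion error. The paper evaluates $V(\theta_1,\theta_2)$ by brute force, substituting the centroid coordinates into $\|x-g\|^2$ and integrating that expanded expression over the sector; you instead invoke the mass-center identity
\[
\iint_S \|x-g\|^2\,dP \;=\; \iint_S \|x\|^2\,dP \;-\; P(S)\,\|g\|^2,
\]
which reduces the problem to the second moment about the origin (trivially $\frac{\theta_2-\theta_1}{4\pi}$ in polar coordinates) plus the single identity $(\sin\theta_2-\sin\theta_1)^2+(\cos\theta_2-\cos\theta_1)^2=4\sin^2\bigl(\tfrac12(\theta_2-\theta_1)\bigr)$. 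I checked the arithmetic: $P(S)\|g\|^2=\frac{8\sin^2(\frac12(\theta_2-\theta_1))}{9\pi(\theta_2-\theta_1)}$, and combining over the denominator $36\pi(\theta_2-\theta_1)$ gives exactly the stated formula. Your route is cleaner and less error-prone than the paper's direct integration, and it is the same decomposition the paper itself uses implicitly in Corollary~\ref{cor1} for the full disc; the only point that needs stating (and you state it) is that the distortion error "with respect to the centroid" is what the lemma asks for, with the centroid being the minimizer by Proposition~\ref{prop10}$(iii)$. No gaps.
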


\begin{proof} Using the definitions of centroid and the distortion error, we have
\begin{align*}
g(\gq_1, \gq_2)&=\frac{\int _0^1\int _{\theta _1}^{\theta _2}\frac{r}{\pi } (r \cos (\theta ), \, r \sin (\theta ))d\theta dr}{\int _0^1\int _{\theta _1}^{\theta _2}\frac{r}{\pi }d\theta dr}=\Big(\frac{2 (\sin \theta_1-\sin\theta_2)}{3 (\theta _1-\theta _2)},-\frac{2 (\cos \theta _1-\cos \theta _2)}{3 (\theta _1-\theta _2)}\Big), \te{ and }
\end{align*}
\begin{align*}
&V(\gq_1, \gq_2)=\int _0^1\int _{\theta _1}^{\theta _2}\frac{r \left(\Big(\frac{2 \left(\cos \theta _1-\cos\theta _2\right)}{3 \left(\theta _1-\theta _2\right)}+r \sin (\theta )\Big){}^2+\left(r \cos \theta-\frac{2 \left(\sin \theta _1-\sin \theta _2\right)}{3 \left(\theta _1-\theta _2\right)}\right){}^2\right)}{\pi }d\theta dr\\
&=\int_{\theta _1}^{\theta _2} \frac{\frac{16 \left(\sin \left(\theta -\theta _1\right)-\sin \left(\theta -\theta _2\right)\right)}{\theta _1-\theta _2}-\frac{16 \left(\cos \left(\theta _1-\theta _2\right)-1\right)}{\left(\theta _1-\theta _2\right){}^2}+9}{36 \pi } \, d\theta=\frac{9 \left(\theta _2-\theta _1\right){}^2-32 \sin ^2\left(\frac{1}{2} \left(\theta _2-\theta _1\right)\right)}{36 \pi  \left(\theta _2-\theta _1\right)}.
\end{align*}
Thus, the lemma is yielded.
\end{proof}

We now give the following note.

\begin{note} \label{note100} With respect to any diagonal the unit disc equipped with the uniform distribution has maximum symmetry, i.e., with respect to any diagonal the unit disc is geometrically symmetric as well as symmetric with respect to the uniform distribution. By the `symmetric with respect to the uniform distribution' it is meant that if two regions of similar geometrical shape lie in the opposite sides of a diagonal and are equidistant from the diagonal, then they have the same probability. This motivates us to give the following conjecture.

\begin{conj} \label{conj1}  $(i)$ The Voronoi regions of the points in an optimal set of two-means with respect to the uniform distribution defined on a disc partition the disc into two regions bounded by the semicircles; $(ii)$ the Voronoi regions of the points in an optimal set of three-means partition the disc into three sectors each subtending a central angle of $\frac{2 \pi}{3}$ radians; $(iii)$ for $n=4, 5, 6$, the Voronoi regions of the points in an optimal set of $n$-means either form a regular $n$-gon with center same as the center of the disc, or one of the points lies at the center of the disc and the other $(n-1)$ points form a regular $(n-1)$-gon with center as the center of the disc.
 \end{conj}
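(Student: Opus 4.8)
The plan is to derive each clause from the necessary conditions of Proposition~\ref{prop10} together with the full rotational/reflection symmetry of the uniform distribution $P$, reducing each case to a low‑dimensional minimization that Lemma~\ref{lemma12} then evaluates in closed form. The guiding identity is that for a fixed Borel partition $\{C_1,\dots,C_n\}$ of the disc the cost $\sum_i\int_{C_i}\|x-p_i\|^2\,dP$ is minimized by $p_i=E(X\mid X\in C_i)$, so that $V_n$ is the least value of $\sum_i P(C_i)\operatorname{Var}(X\mid X\in C_i)$ over those partitions that arise as Voronoi diagrams of $n$ points; the task is to enumerate the relevant partition shapes and compare. For $n=2$ such a partition is the bipartition of the plane by the perpendicular bisector $\ell$ of the two points; $\ell$ must meet the disc $D$ (else a cell has $P$‑measure $0$, against Proposition~\ref{prop10}$(i)$), so after a rotation $\ell=\{x_1=d\}$, $d\in[0,1)$, and the two cells are circular segments symmetric about the $x_1$‑axis. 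Writing the best two‑point cost for this family as an explicit even function $W(d)$ (using the two segment centroids), I would check $W$ is strictly increasing on $[0,1)$, so the optimum is the diametral cut $d=0$; the cells are then semicircles, i.e. sectors of central angle $\pi$, and Lemma~\ref{lemma12} gives the quantizers $g(0,\pi)$ and its antipode, with $V_2=2V(0,\pi)$.

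For $n=3$ the Voronoi diagram of three points is either three parallel strips (collinear points, dispatched by a direct comparison) or a single vertex with three rays. The crux is that this vertex must be the centre $O$: if not, a rotation of the three‑ray pencil about the vertex together with a push of the vertex toward $O$ strictly lowers the cost, which I would make precise using Proposition~\ref{prop10}$(iii)$ and the symmetry of $P$. With the vertex at $O$ the three cells are sectors of angles $\psi_1,\psi_2,\psi_3>0$ with $\psi_1+\psi_2+\psi_3=2\pi$ and total cost $\sum_{i=1}^3\frac{9\psi_i^2-32\sin^2(\psi_i/2)}{36\pi\,\psi_i}$ by Lemma~\ref{lemma12}; minimizing this over the simplex yields $\psi_1=\psi_2=\psi_3=\tfrac{2\pi}{3}$, which is clause $(ii)$, with $V_3=3V(0,\tfrac{2\pi}{3})$ and the three quantizers $g(0,\tfrac{2\pi}{3})$ and its $\tfrac{2\pi}{3}$‑ and $\tfrac{4\pi}{3}$‑rotations.

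For $n=4,5,6$ the same reduction leaves two families of symmetric configurations to compare: $n$ congruent sectors of angle $\tfrac{2\pi}{n}$, of total cost $nV(0,\tfrac{2\pi}{n})$, and one point at $O$ ringed by $n-1$ congruent truncated sectors, whose common centroid and distortion I would compute directly (the cell of $O$ being a centrally symmetric region with centroid $O$). For each $n\in\{4,5,6\}$ one then compares the (finitely many) resulting critical values and records the smaller; I expect the regular $n$‑gon of sector‑centroids to win in all three cases, but both candidates must be produced and the losing one eliminated — which is precisely why the conjecture is stated as an ``either/or''.

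The main obstacle is the symmetry reduction: passing from ``an optimal set satisfies the stationarity conditions of Proposition~\ref{prop10}'' to ``its Voronoi diagram is a pencil of equal sectors, possibly plus a central point'' is not automatic, since Proposition~\ref{prop10} supplies only local first‑order information. The honest route is to enumerate \emph{every} configuration compatible with Proposition~\ref{prop10} — bounding the number of Voronoi vertices, forcing the edges to emanate from $O$, treating collinear and boundary sub‑cases — and then minimize the distortion over this finite list; Lemma~\ref{lemma12} makes the final numerical comparison mechanical, but this enumeration step, and in particular ruling out ``lopsided'' stationary sector configurations (e.g. two small sectors and one large, or an off‑centre central point) when $n=5$ and $n=6$, is where the genuine case analysis — and the real length of the argument — lies.
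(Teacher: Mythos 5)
The first thing to note is that the paper does not prove this statement: it is introduced, and used, purely as a conjecture. The surrounding text offers only a symmetry heuristic (``maximum symmetry'' of the disc with respect to diameters), the subsequent computations of $V_n$ for $2\le n\le 6$ are all carried out explicitly \emph{under} the conjecture, and the paper remarks that the analogous question for larger $n$ is open. So there is no proof in the paper to compare yours against; a complete argument here would be new mathematics rather than a reconstruction of the author's.

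Your proposal does not supply such an argument, and the gap is exactly where you say it is. The $n=2$ reduction (Voronoi cells of two points are half-planes, rotate the separating line to $\{x_1=d\}$, minimize the relaxed cost $W(d)$ over $d$) is sound, and clause $(i)$ is plausibly provable along those lines once the monotonicity of $W$ is checked. But for $n=3$ the decisive step --- that the single Voronoi vertex must be the centre $O$ --- is asserted via an unspecified ``rotation plus push toward $O$'' perturbation; Proposition~\ref{prop10} gives only necessary (first-order) conditions and does not by itself exclude asymmetric critical configurations, let alone asymmetric global minimizers, so this step needs an actual variational argument that you have not given. For $n=4,5,6$ the situation is worse: the Voronoi diagram of $n\ge 4$ points generically has several vertices (for four points the generic combinatorics is two vertices joined by an edge, not four sectors meeting at a point), so the claim that ``the same reduction leaves two families to compare'' is not a reduction at all --- it is a restatement of clause $(iii)$. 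You concede this in your closing paragraph: the enumeration of all Voronoi combinatorics compatible with Proposition~\ref{prop10} and the elimination of lopsided stationary configurations ``is where the real length of the argument lies.'' That enumeration is precisely the content of the conjecture, and it is left undone. As written, the proposal is a plan for a proof with its essential step missing, which is consistent with the paper's own decision to leave the statement unproved.
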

Under the above conjecture, in the following subsections, we determine the optimal sets of $n$-means for $2\leq n\leq 6$. It is extremely difficult and the answer is not known yet how the points in an optimal set of $n$-means for higher values of $n$ are located.

\end{note}
\subsection{Optimal sets of $2$-means} Let $\tilde p$ and $\tilde q$ be the position vectors of the two points in an optimal set of two-means. By Conjecture~\ref{conj1},  it is clear that $\tilde p$ and $\tilde q$ can lie on any diameter and will be  equidistant from the center of the disc. Also, the boundary of the Voronoi regions of $\tilde p$ and $\tilde q$ will be another diameter which is perpendicular to the diameter containing the two points.

First, we assume that the boundary of the Voronoi regions of $\tilde p$ and $\tilde q$ is the diagonal along the $x_2$-axis. Then, $\tilde p$ and $\tilde q$ will be the centroids of the right and left halves of the disc, and so by Lemma~\ref{lemma12}, we have
\begin{align*} \tilde p& =g(-\frac{\pi}2, \frac{\pi}2) =(\frac{4}{3 \pi },0) \te{ and } \tilde q =g(\frac {\pi}2, \frac {3\pi}2)=(-\frac{4}{3 \pi },0).
\end{align*} Notice that because of the uniform distribution the two quantization errors due to the points $\tilde p$ and $\tilde q$ will be the same, and so by Lemma~\ref{lemma12}, the quantization error for an optimal set of two-means is given by
\begin{align*}
V_2=2 \times V(-\frac{\pi}2, \frac{\pi}2)=\frac{9 \pi ^2-32}{18 \pi ^2}=0.319873.
\end{align*}
Similarly, it can be proved that if the boundary of the two Voronoi regions is the diagonal along the $x_1$-axis, then $\tilde p$ and $\tilde q$ will be the centroids of the upper and lower halves of the disc, and so
\[\tilde p=(0, \frac{4}{3 \pi }) \te{ and } \tilde q=(0, -\frac{4}{3 \pi }) \te{ with quantizaiton error } \frac{9 \pi ^2-32}{18 \pi ^2}.\]
Thus, due to rotational symmetry, we can deduce the following theorem (see Figure~\ref{FigA}$(a)$).
\begin{theorem}
For the uniform distribution on the unit disc, there are uncountably many optimal sets of two-means with quantization error $0.319873$. Any such two means form a diameter of the circle $x_1^2+x_2^2=\frac {16}{9 \pi^2}$.
\end{theorem}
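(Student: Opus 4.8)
The plan is to reduce the problem to the one-dimensional analysis already contained in Corollary~\ref{cor1} and Lemma~\ref{lemma12}, and then exploit the rotational symmetry of the disc to pass from one distinguished configuration to the full family. First I would invoke Conjecture~\ref{conj1}$(i)$, which we are assuming, to fix the geometric picture: the two Voronoi regions are the two half-discs determined by some diameter $\ell$, and by Proposition~\ref{prop10}$(iii)$ the two optimal quantizers $\tilde p,\tilde q$ must be the centroids of these two half-discs. By rotational invariance of the uniform density it suffices to compute the cost for one choice of $\ell$ — say $\ell$ along the $x_2$-axis — and I would read off from Lemma~\ref{lemma12} with $(\gq_1,\gq_2)=(-\tfrac{\pi}{2},\tfrac{\pi}{2})$ that $\tilde p=g(-\tfrac{\pi}{2},\tfrac{\pi}{2})=(\tfrac{4}{3\pi},0)$ and, by symmetry across $\ell$, $\tilde q=(-\tfrac{4}{3\pi},0)$, each half contributing $V(-\tfrac{\pi}{2},\tfrac{\pi}{2})$, so that $V_2=2V(-\tfrac{\pi}{2},\tfrac{\pi}{2})=\frac{9\pi^2-32}{18\pi^2}$. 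Numerically this is $0.319873$. Since a rotation of the plane by any angle $\vartheta$ carries the uniform distribution on the disc to itself, rotating this configuration produces, for each $\vartheta$, a pair of points at distance $\frac{4}{3\pi}$ from the origin in opposite directions, i.e.\ a diameter of the circle $x_1^2+x_2^2=\tfrac{16}{9\pi^2}$, all achieving the same cost; hence there are uncountably many optimal sets, and every such set is a diameter of that circle. Conversely, any optimal set of two-means, being (under the conjecture) a pair of centroids of complementary half-discs, is obtained this way.

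The main step that genuinely needs care — and the one I expect to be the real obstacle if one wanted a fully rigorous argument rather than a conjecture-assisted one — is justifying that the optimal two-point configuration \emph{must} split the disc by a diameter, i.e.\ establishing Conjecture~\ref{conj1}$(i)$ rather than assuming it. The paper sidesteps this by positing the conjecture, so within the scope of the excerpt the step is immediate; but if pressed I would argue as follows. By Proposition~\ref{prop10} the Voronoi partition for two optimal points consists of two half-planes meeting along the perpendicular bisector of the segment $\tilde p\tilde q$; intersected with the disc this bisector is a chord, and the two regions are the two centroids' cells. One then shows that the common value of the two one-cell distortions is minimized precisely when the chord is a diameter. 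A clean way to see this is to parametrize the chord by its signed distance $t$ from the center along the segment $\tilde p\tilde q$, write the total cost as a function $W(t)$ using the parallel-axis decomposition of Corollary~\ref{cor1} (moment of inertia of each piece about its own centroid, which is what Lemma~\ref{lemma12}'s computation amounts to), and check that $W$ is strictly convex with a unique critical point at $t=0$; alternatively one can use the fact that splitting along a line through the centroid of the whole disc and into regions of equal ($\tfrac12,\tfrac12$) mass is forced by the centroid condition together with the half-plane shape of Voronoi cells. Either route is a short one-variable calculus argument but does require the explicit formulas of Lemma~\ref{lemma12}.

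The remaining pieces are routine. The evaluation $2V(-\tfrac{\pi}{2},\tfrac{\pi}{2})=\frac{9\pi^2-32}{18\pi^2}$ is a direct substitution into the formula for $V(\gq_1,\gq_2)$ with $\gq_2-\gq_1=\pi$, using $\sin^2(\pi/2)=1$; the decimal $0.319873$ follows. The identification of the optimal sets with diameters of the circle of radius $\frac{4}{3\pi}$ is just the observation that $|g(-\tfrac\pi2,\tfrac\pi2)|=\tfrac{4}{3\pi}$ together with the rotational-symmetry transport argument above. Thus, modulo the conjecture (equivalently, modulo the convexity check sketched in the previous paragraph), the theorem follows by assembling Corollary~\ref{cor1}, Lemma~\ref{lemma12}, Proposition~\ref{prop10}$(iii)$, and the rotational invariance of the uniform distribution on the disc.
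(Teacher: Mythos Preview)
Your proposal is correct and follows essentially the same route as the paper: assume Conjecture~\ref{conj1}$(i)$, fix the boundary along the $x_2$-axis, read off the centroids and the half-disc distortion from Lemma~\ref{lemma12} at $(\gq_1,\gq_2)=(-\tfrac{\pi}{2},\tfrac{\pi}{2})$, double to get $V_2=\frac{9\pi^2-32}{18\pi^2}$, and then invoke rotational symmetry to obtain the full family on the circle of radius $\tfrac{4}{3\pi}$. Your additional paragraph sketching how one might actually \emph{prove} Conjecture~\ref{conj1}$(i)$ via a one-variable convexity argument in the chord-offset parameter goes beyond what the paper does (it simply assumes the conjecture), but it is a welcome and correct elaboration rather than a deviation.
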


\subsection{Optimal sets of 3-means}
By  Conjecture~\ref{conj1}, the Voronoi regions of the three points in an optimal set of three-means partition the disc into three sectors each making a central angle of $\frac {2 \pi}{3}$ radians. In Figure~\ref{FigA}$(b)$, we have considered such a configuration of three points $P$, $Q$, and $R$ with the Voronoi regions sectors $AOB$, $BOC$ and $COA$, respectively. Hence by Lemma~\ref{lemma12},
\begin{align*} \tilde p&=g(0,  \frac {2\pi} 3)=(\frac{\sqrt{3}}{2 \pi },\frac{3}{2 \pi }), \  \tilde q=g(\frac {2\pi} 3, \frac {4\pi} 3)=(-\frac{\sqrt{3}}{\pi },0), \te{ and } \tilde r=g(\frac {4\pi} 3, \frac {6\pi} 3)=(\frac{\sqrt{3}}{2 \pi },-\frac{3}{2 \pi }),
\end{align*}
with quantization error
\begin{align*}
V_3=3 \times (\te{Quantization error due to } P)=3\times V(0,  \frac {2\pi} 3)=0.196036.
\end{align*}
Notice that $P$, $Q$, and $R$ lie on the circle $x_1^2+x_2^2=\frac 3{\pi^2}$. Thus, we deduce the following theorem.

\begin{figure}\begin{tikzpicture}[line cap=round,line join=round,>=triangle 45,x=1.0cm,y=1.0cm]
\clip(-2.2,-3.4) rectangle (2.2,2.2);
\draw [color=ffqqqq] (0.,0.) circle (1.cm);
\draw(0.,0.) circle (1.9804039991880444cm);
\draw (0.3743093222426975,0.9273039044898994)-- (-0.38723248435509183,-0.9219821056073613);
\draw (0.,6.)-- (0.,0.);
\draw (-3.68,0.)-- (4.,0.);
\draw (0.,0.)-- (0.,6.);
\draw (0.,0.)-- (4.,0.);
\draw (0.,0.)-- (-8.,0.);
\draw (0.,0.)-- (0.,-2.);
\begin{scriptsize}
\draw [fill=xdxdff] (0.3743093222426975,0.9273039044898994) circle (1.5pt);
\draw[color=qqqqff] (0.5561800934114638,1.0622878702573413) node {$P$};
\draw [fill=xdxdff] (-0.38723248435509183,-0.9219821056073613) circle (1.5pt);
\draw[color=qqqqff] (-0.5877773286158623,-1.0987667896564022) node {$Q$};
\draw[color=qqqqff] (0.15276774216295861,-0.15880227130029842) node {$O$};
\draw [fill=xdxdff] (0.,6.) circle (1.5pt);
\draw[color=xdxdff] (0.19590755802205334,5.9126910144605755) node {$B$};
\draw [fill=xdxdff] (-3.68,0.) circle (1.5pt);
\draw[color=xdxdff] (-3.489957611731146,0.3977498958073007) node {$C$};
\draw [fill=xdxdff] (4.,0.) circle (1.5pt);
\draw[color=xdxdff] (4.1866187192586,0.3977498958073007) node {$D$};
\draw [fill=xdxdff] (-8.,0.) circle (1.5pt);
\draw[color=xdxdff] (-7.813228036404071,0.3977498958073007) node {$E$};
\draw [fill=xdxdff] (0.,-6.) circle (1.5pt);
\draw[color=xdxdff] (0.8887393568478427,-4.9786248630808165) node {$F$};
\draw [fill=uuuuuu] (0.,0.01835184555454793) circle (1.5pt);
\draw[] (0.04276774216295861,-2.5582947786169992) node { $(a)$ };
\end{scriptsize}
\end{tikzpicture}\qquad
\begin{tikzpicture}[line cap=round,line join=round,>=triangle 45,x=1.0cm,y=1.0cm]
\clip(-2.2,-3.4) rectangle (2.3,2.3);
\draw [shift={(0.,0.)},color=qqwuqq,fill=qqwuqq,fill opacity=0.1] (0,0) -- (0.:0.40519331260804353) arc (0.:120.0003803191083:0.40519331260804353) -- cycle;
\draw(0.,0.) circle (2.0073020397932857cm);
\draw (0.,0.)-- (-1.0036625588930772,1.7383678974355474);
\draw (0.,0.)-- (-1.0054417292685358,-1.7373394625126608);
\draw (0.,0.)-- (2.0073020397932857,0.);
\draw (0.,0.)-- (2.0073020397932857,0.);
\draw [color=ffqqqq] (0.,0.) circle (1.1397458329113863cm);
\draw (0.,0.)-- (-3.430788544548882, 0.);
\draw (0.,0.)-- (0.,2.6929868544624704);
\draw (0.,0.)-- (0.,-2.0);
\draw (0.,0.)-- (4.,0.);
\draw [dotted] (0.6102660268693533,0.9625985352616965)-- (-1.1397458329113863,0.);
\draw [dotted] (-1.1397458329113863,0.)-- (0.6532220947527585,-0.933981508685096);
\draw [dotted] (0.6102660268693533,0.9625985352616965)-- (0.6532220947527585,-0.933981508685096);
\begin{scriptsize}
\draw [fill=uuuuuu] (0.,0.) circle (1.5pt);
\draw [fill=xdxdff] (-1.0036625588930772,1.7383678974355474) circle (1.5pt);
\draw[color=qqqqff] (-1.1346931064366357,1.936626004260788) node {$B$};
\draw [fill=xdxdff] (-1.0054417292685358,-1.7373394625126608) circle (1.5pt);
\draw[color=qqqqff] (-1.0941737751758314,-1.9397233530228342) node {$C$};
\draw [fill=uuuuuu] (2.0073020397932857,0.) circle (1.5pt);
\draw[color=qqqqff] (2.1338662819349152,0.18078831629259673) node {$A$};
\draw[color=qqwuqq] (0.4320543689811326,0.45091719136462616) node {$120\textrm{\degre}$};
\draw [fill=xdxdff] (0.6102660268693533,0.9625985352616965) circle (1.5pt);
\draw[color=qqqqff] (0.7021832440531616,1.1532522665519025) node {$P$};
\draw [fill=xdxdff] (0.6532220947527585,-0.933981508685096) circle (1.5pt);
\draw[color=qqqqff] (0.7832219065747703,-0.9672594027635284) node {$R$};
\draw [fill=uuuuuu] (-1.1397458329113863,0.) circle (1.5pt);
\draw[color=qqqqff] (-1.2832639877262517,-0.18985989004023712) node {$Q$};
\draw[color=qqqqff] (0.15276774216295861,-0.15880227130029842) node {$O$};
\draw [fill=xdxdff] (-3.430788544548882,0.) circle (1.5pt);
\draw[] (-0.02326398772625107,-2.5582947786169992) node { $(b)$ };
\end{scriptsize}
\end{tikzpicture} \qquad
\begin{tikzpicture}[line cap=round,line join=round,>=triangle 45,x=1.0cm,y=1.0cm]
\clip(-2.4,-3.4) rectangle (2.3,2.5);
\draw(0.,0.) circle (1.9940258199265823cm);
\draw [color=ffqqqq] (0.,0.) circle (1.226196159861592cm);
\draw (0.,-2.0) -- (0.,2.4);
\draw [domain=-2.4:2.4] plot(\x,{(-0.-0.*\x)/3.85861419223955});
\draw (0.9160804125828327,0.8150789533176415)-- (-0.9136263155800033,-0.8178288194598079);
\draw (-0.8934272702187612,0.8398480429748989)-- (0.8858861009090672,-0.8477988196945341);
\draw [dotted] (-0.8934272702187612,0.8398480429748989)-- (0.9160804125828327,0.8150789533176415);
\draw [dotted] (0.8858861009090672,-0.8477988196945341)-- (0.9160804125828327,0.8150789533176415);
\draw [dotted] (-0.8934272702187612,0.8398480429748989)-- (-0.9136263155800033,-0.817828819459808);
\draw [dotted] (-0.9136263155800033,-0.817828819459808)-- (0.8858861009090672,-0.8477988196945341);
\begin{scriptsize}
\draw [fill=xdxdff] (0.,3.8587587184996655) circle (1.5pt);
\draw[color=xdxdff] (0.18838882901011528,4.212197473351449) node {$B_1$};
\draw[color=black] (0.2092457114179062,4.504193827060523) node {$a$};
\draw [fill=xdxdff] (-3.85861419223955,0.) circle (1.5pt);
\draw[color=xdxdff] (-3.6701344164312033,0.35367422791012393) node {$C_1$};
\draw[color=black] (-4.546123477558422,0.35367422791012393) node {$b$};
\draw [fill=xdxdff] (0.9160804125828327,0.8150789533176415) circle (1.5pt);
\draw[color=qqqqff] (1.0643778901373335,1.0045219945905982) node {$P$};
\draw [fill=xdxdff] (-0.8934272702187612,0.8398480429748989) circle (1.5pt);
\draw[color=qqqqff] (-1.087308527496294,0.9793807001438524) node {$Q$};
\draw [fill=xdxdff] (0.8858861009090672,-0.8477988196945341) circle (1.5pt);
\draw[color=qqqqff] (0.98095036050617,-1.0228800110040788) node {$S$};
\draw [fill=uuuuuu] (0.,0.) circle (1.5pt);
\draw[color=qqqqff] (0.06324753456336983,-0.1886047146924408) node {$O$};
\draw [fill=uuuuuu] (1.9940258199265823,0.) circle (1.5pt);
\draw[color=qqqqff] (2.148935775342461, 0.18078831629259673) node {$A$};
\draw [fill=uuuuuu] (0.,1.9940258199265826) circle (1.5pt);
\draw[color=qqqqff] (0.14667506419453347,2.203364291834682) node {$B$};
\draw [fill=uuuuuu] (-1.9940258199265823,0.) circle (1.5pt);
\draw[color=qqqqff] (-2.1818664127014216,0.1802466982789601) node {$C$};
\draw [fill=uuuuuu] (0.,-1.9940258199265826) circle (1.5pt);
\draw[color=qqqqff] (0.14667506419453347,-2.1282947786169992) node {$D$};
\draw [fill=uuuuuu] (-0.9136263155800033,-0.817828819459808) circle (1.5pt);
\draw[color=qqqqff] (-0.9587397034183849,-1.0020231285962877) node {$R$};
\draw[] (0.06324753456336983,-2.5582947786169992) node { $(c)$ };
\end{scriptsize}
\end{tikzpicture}
\caption{$(a)$ Optimal quantizers $P$, $Q$ form a diameter of the circle $x_1^2+x_2^2=\frac {16}{9 \pi^2}$; $(b)$ optimal quantizers $P$, $Q$, $R$ form an equilateral triangle inscribed in the circle $x_1^2+x_2^2=\frac {3}{\pi^2}$; $(c)$ optimal quantizers $P$, $Q$, $R$ and $S$ form a square inscribed in the circle $x_1^2+x_2^2=\frac{32}{9 \pi ^2}$.} \label{FigA}
\end{figure}
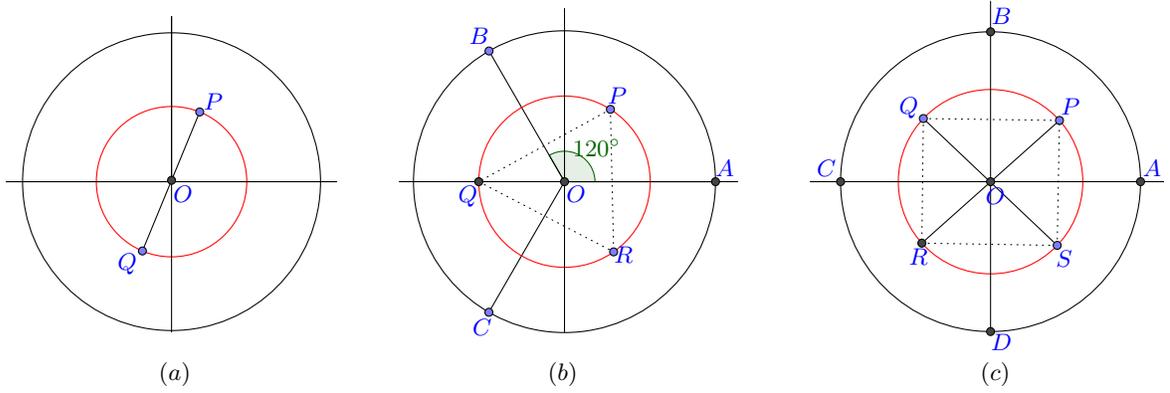

\begin{theorem}
For the uniform distribution on the unit disc, there are uncountably many optimal sets of three-means with quantization error $0.196036$. Any such three means form an equilateral triangle inscribed in the circle $x_1^2+x_2^2=\frac 3{\pi^2}$.
\end{theorem}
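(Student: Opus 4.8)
The plan is to wrap up the computation begun in the subsection above, using Conjecture~\ref{conj1}$(ii)$ to pin down the shape of the optimal Voronoi partition, Proposition~\ref{prop10}$(iii)$ to identify the three points as centroids, and Lemma~\ref{lemma12} for the closed forms. First I would observe that, granted Conjecture~\ref{conj1}$(ii)$, every optimal set of three-means $\set{\tilde p,\tilde q,\tilde r}$ has Voronoi diagram consisting of three sectors each of central angle $\frac{2\pi}{3}$; such a partition is specified by a single rotation parameter $\phi\in[0,\frac{2\pi}{3})$, the sectors running over the angular intervals $[\phi,\phi+\frac{2\pi}{3}]$, $[\phi+\frac{2\pi}{3},\phi+\frac{4\pi}{3}]$, $[\phi+\frac{4\pi}{3},\phi+2\pi]$. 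By Proposition~\ref{prop10}$(iii)$ each point is the conditional expectation of $X$ over its region, i.e.\ the centroid of the corresponding sector, so Lemma~\ref{lemma12} gives $\tilde p=g(\phi,\phi+\frac{2\pi}{3})$, $\tilde q=g(\phi+\frac{2\pi}{3},\phi+\frac{4\pi}{3})$, $\tilde r=g(\phi+\frac{4\pi}{3},\phi+2\pi)$; the case $\phi=0$ recovers the configuration displayed above. I would also verify self-consistency: since the three centroids are equidistant from $O$ and have arguments spaced $\frac{2\pi}{3}$ apart, they form an equilateral triangle centered at $O$, so the perpendicular bisector of each pair passes through $O$ and the three bisectors do cut the disc into exactly these sectors.

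Next I would compute the quantization error. By Lemma~\ref{lemma12} the quantity $V(\theta_1,\theta_2)$ depends on $(\theta_1,\theta_2)$ only through $\theta_2-\theta_1$, which equals $\frac{2\pi}{3}$ for each of the three sectors; hence the total distortion of the configuration above is
\[
3\,V\!\left(0,\tfrac{2\pi}{3}\right)=3\cdot\frac{9\left(\tfrac{2\pi}{3}\right)^{2}-32\sin^{2}\!\left(\tfrac{\pi}{3}\right)}{36\pi\cdot\tfrac{2\pi}{3}}=\frac{\pi^{2}-6}{2\pi^{2}}=0.196036,
\]
independently of $\phi$. Since an optimal set exists and, by Conjecture~\ref{conj1}$(ii)$, is forced to be of this form, this common value is $V_3$, and consequently \emph{every} configuration of this form attains $V_3$ and is therefore optimal.

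Finally, for the geometric statement I would apply the product-to-sum identities: for any $\phi$,
\[
g\!\left(\phi,\phi+\tfrac{2\pi}{3}\right)=\frac{4\sin(\pi/3)}{3\cdot(2\pi/3)}\,\bigl(\cos(\phi+\tfrac{\pi}{3}),\,\sin(\phi+\tfrac{\pi}{3})\bigr)=\frac{\sqrt{3}}{\pi}\,\bigl(\cos(\phi+\tfrac{\pi}{3}),\,\sin(\phi+\tfrac{\pi}{3})\bigr),
\]
so each of $\tilde p,\tilde q,\tilde r$ has norm $\frac{\sqrt3}{\pi}$ and they lie on the circle $x_1^{2}+x_2^{2}=\frac{3}{\pi^{2}}$; being equally spaced on it, they are the vertices of an equilateral triangle inscribed in that circle. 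As $\phi$ ranges over $[0,\tfrac{2\pi}{3})$ the resulting three-point sets are pairwise distinct (their angular positions differ by a non-multiple of $\tfrac{2\pi}{3}$), yielding a continuum of optimal sets, which gives the ``uncountably many'' assertion.

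As for difficulty: once Conjecture~\ref{conj1}$(ii)$ is assumed there is no real obstacle — everything reduces to the explicit formulas of Lemma~\ref{lemma12} and elementary trigonometry. The genuinely hard content, which this paper assumes rather than proves, is the conjecture itself, namely that the optimal three-point Voronoi partition is into three equal $\frac{2\pi}{3}$-sectors rather than some asymmetric arrangement; a self-contained proof of that would require ruling out all competing partitions, presumably via a symmetrization argument exploiting the maximal symmetry of the uniform disc noted in Note~\ref{note100}.
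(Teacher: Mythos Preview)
Your proposal is correct and follows essentially the same route as the paper: assume Conjecture~\ref{conj1}$(ii)$, read off the centroids and the distortion from Lemma~\ref{lemma12}, and invoke rotational symmetry for the uncountable family. You are in fact more thorough than the paper, which computes only the single configuration $\phi=0$ and then asserts the theorem; your explicit $\phi$-parametrization, the self-consistency check, and the product-to-sum computation of $\|g(\phi,\phi+\tfrac{2\pi}{3})\|=\tfrac{\sqrt3}{\pi}$ are details the paper leaves implicit.
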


\subsection{Optimal sets of $4$-means}
By  Conjecture~\ref{conj1}, the Voronoi regions of the four points in an optimal set of four-means partition the disc into four sectors each making a central angle of $\frac {\pi}{2}$ radians. As shown for $n=5$ in the next subsection, we can show that the optimal set of four-means does not contain the center of the disc.  In Figure~\ref{FigA}$(c)$, we have considered such a configuration of four optimal quantizers $P$, $Q$, $R$, and $S$ with the Voronoi regions sectors $AOB$, $BOC$, $COD$, and $DOA$, respectively. Hence by Lemma~\ref{lemma12},
\begin{align*} \tilde p & =g(0,  \frac {\pi} 2)=(\frac{4}{3 \pi },\frac{4}{3 \pi }), \  \tilde q =g(\frac {\pi} 2, \pi)=(-\frac{4}{3 \pi },\frac{4}{3 \pi }), \
 \tilde r =g(\pi, \frac {3\pi} 2)=(-\frac{4}{3 \pi },-\frac{4}{3 \pi }),\\
  \tilde s &=g(\frac {3\pi} 2, 2 \pi)=(\frac{4}{3 \pi },-\frac{4}{3 \pi }),
\end{align*}
with quantization error
$
V_4=4 \times (\te{Quantization error due to } P)=4\times V(0,  \frac {\pi} 2)=\frac{9 \pi ^2-64}{18 \pi ^2}=0.139747.
$
Thus, we can deduce the following theorem.
\begin{theorem}
For the uniform distribution on the unit disc, there are uncountably many optimal sets of four-means with quantization error $0.139747$. Any such four means form a square inscribed in the circle $x_1^2+x_2^2=\frac{32}{9 \pi ^2}$.
\end{theorem}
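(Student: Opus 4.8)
The plan is to run the argument under Conjecture~\ref{conj1}, in the same spirit as the proofs for $n=2$ and $n=3$, the only new feature being that for $n=4$ the conjecture permits a second type of configuration that must be eliminated. By Conjecture~\ref{conj1}$(iii)$ an optimal set $\ga$ of four-means is either \emph{(a)} the four vertices of a square centred at $O$, or \emph{(b)} a point at $O$ together with the three vertices of an equilateral triangle centred at $O$. In case (a) the perpendicular bisectors of adjacent vertices are four rays through $O$, so the Voronoi diagram inside the disc consists of four sectors of central angle $\frac{\pi}{2}$; invoking the rotational symmetry of the disc and of the uniform density (Note~\ref{note100}), I may take these sectors to be the ones with angle pairs $(0,\frac{\pi}{2})$, $(\frac{\pi}{2},\pi)$, $(\pi,\frac{3\pi}{2})$, $(\frac{3\pi}{2},2\pi)$.

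For case (a), Proposition~\ref{prop10}$(iii)$ forces each vertex to be the centroid of its own sector, so Lemma~\ref{lemma12} gives $\tilde p=g(0,\frac{\pi}{2})=(\frac{4}{3\pi},\frac{4}{3\pi})$ together with the three rotated copies $\tilde q,\tilde r,\tilde s$; one checks that this quadruple is self-consistent, i.e.\ the four sectors really are its Voronoi regions (each point lies on the bisector separating its sector from the neighbouring ones). The cost is the sum of the four equal sector errors from Lemma~\ref{lemma12},
\[
V(P;\ga)=4\,V\!\left(0,\tfrac{\pi}{2}\right)=\frac{9\pi^2-64}{18\pi^2}=0.139747\ldots,
\]
and since $\bigl(\tfrac{4}{3\pi}\bigr)^2+\bigl(\tfrac{4}{3\pi}\bigr)^2=\tfrac{32}{9\pi^2}$, the four points form a square inscribed in $x_1^2+x_2^2=\tfrac{32}{9\pi^2}$. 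Rotating this square about $O$ yields uncountably many configurations all with this same cost, which accounts for the ``uncountably many'' in the statement.

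The remaining and main step is to rule out case (b); here I follow the method used for $n=5$ in the next subsection. Parametrize by the common distance $s>0$ from $O$ of the three triangle vertices. The Voronoi cell of $O$ is the equilateral triangle cut out by the three perpendicular bisectors of $O$ with the outer points, clipped to the disc, and each outer cell is the complementary curvilinear region bounded by two of those bisectors, the bisector between the two neighbouring outer points, and an arc of the circle. The centroid of $O$'s cell is $O$ by symmetry, while Proposition~\ref{prop10}$(iii)$ applied to an outer point pins down $s$; with that value a direct integration over the four cells (splitting along the bisector lines, exactly as $V(\gq_1,\gq_2)$ was computed in Lemma~\ref{lemma12}) produces the total cost of configuration (b). The assertion to be verified is that this cost is strictly larger than $0.139747\ldots$, so (b) is never optimal. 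This comparison is the real obstacle: unlike in case (a), the cells here are bounded by a mixture of chords and circular arcs, so the integrals are appreciably heavier, and one must also check that for the relevant $s$ the central triangle indeed lies inside the disc. Once (b) is discarded, case (a) is the unique geometry, and the theorem follows.
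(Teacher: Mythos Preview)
Your proposal is correct and follows essentially the same approach as the paper: invoke Conjecture~\ref{conj1}$(iii)$ to reduce to the two configurations, compute case~(a) via Lemma~\ref{lemma12} to obtain the four centroids $(\pm\tfrac{4}{3\pi},\pm\tfrac{4}{3\pi})$ and cost $\tfrac{9\pi^2-64}{18\pi^2}$, and eliminate case~(b) (centre plus equilateral triangle) by the parametrized computation carried out for $n=5$. The paper in fact does not write out the case~(b) calculation for $n=4$ at all---it simply says ``As shown for $n=5$ in the next subsection, we can show that the optimal set of four-means does not contain the center of the disc''---so your outline of how that exclusion would actually be executed is, if anything, more explicit than what the paper provides.
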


\subsection{Optimal sets of $5$-means} By  Conjecture~\ref{conj1}, one of the following two cases can arise:

\tbf{Case 1.} In this case, we assume that the five optimal quantizers are equidistant from the center of the disc. So, the Voronoi regions of the points partition the disc into five sectors each making a central angle of $\frac{2\pi}{5}$ radians. As shown in Figure~\ref{FigB}$(a)$, we assume that the five optimal quantizers are $P$, $Q$, $R$, $S$ and $T$ with Voronoi regions the sectors $AOB$, $BOC$, $COD$, $DOE$ and $EOA$, respectively, each subtending a central angle of $\frac{2\pi}{5}$ radians. Hence by Lemma~\ref{lemma12},
\begin{align*} \tilde p & =g(0,  \frac {2\pi} 5)=(0.504551,0.366578), \ \tilde q=g(\frac {2\pi} 5, \frac {4\pi} 5)=(-0.192721,0.593135), \\
 \tilde r & =(\frac {4\pi} 5, \frac {6\pi} 5)=(-0.62366,0),  \ \tilde s  =(\frac {6\pi} 5, \frac {8\pi} 5)=(-0.192721,-0.593135), \\
  \tilde t& =(\frac {8\pi} 5, \frac {10\pi} 5)=(0.504551,-0.366578),
\end{align*}
with quantization error $
V_5=5 \times (\te{Quantization error due to } P)=5\times V(0,  \frac {2\pi} 5)=5\times 0.0222098=0.111049.$

\begin{figure}\begin{tikzpicture}[line cap=round,line join=round,>=triangle 45,x=1.0cm,y=1.0cm]
\clip(-2.2,-3.4) rectangle (2.3,2.3);
\fill[color=zzttqq,fill=zzttqq,fill opacity=0.1] (1.0091,-0.733156) -- (1.0091,0.733156) -- (-0.3854455825217787,1.186271327055918) -- (-1.2473221513812598,0.) -- (-0.3854455825217793,-1.1862713270559175) -- cycle;
\draw(0.,0.) circle (1.9940258199265823cm);
\draw (0.,-2.2) -- (0.,2.2);
\draw [domain=-2.2:2.2] plot(\x,{(-0.-0.*\x)/3.85861419223955});
\draw [dotted] (0.,0.)-- (0.6161887643200652,1.8964309576832978);
\draw [dotted] (0.,0.)-- (-1.6131996408052047,1.1720605314743078);
\draw [dotted] (0.,0.)-- (-1.6131996408052047,-1.1720605314743078);
\draw [dotted] (0.,0.)-- (0.6161887643200652,-1.8964309576832978);
\draw [color=zzttqq] (1.0091,-0.733156)-- (1.0091,0.733156);
\draw [color=zzttqq] (1.0091,0.733156)-- (-0.3854455825217787,1.186271327055918);
\draw [color=zzttqq] (-0.3854455825217787,1.186271327055918)-- (-1.2473221513812598,0.);
\draw [color=zzttqq] (-1.2473221513812598,0.)-- (-0.3854455825217793,-1.1862713270559175);
\draw [color=zzttqq] (-0.3854455825217793,-1.1862713270559175)-- (1.0091,-0.733156);
\draw [dotted] (0.,0.)-- (1.9940258199265823,0.);
\draw [dotted,color=ffqqqq] (0.,0.) circle (1.2473173334544823cm);
\begin{scriptsize}
\draw [fill=xdxdff] (0.,3.8587587184996655) circle (1.5pt);
\draw[color=xdxdff] (-2.9974547382915024,3.1582764937211794) node {$B_1$};
\draw[color=black] (0.1456335798886154,2.90197315447044) node {$a$};
\draw [fill=xdxdff] (-3.85861419223955,0.) circle (1.5pt);
\draw[color=xdxdff] (-2.9974547382915024,3.1582764937211794) node {$C_1$};
\draw[color=black] (-2.956985789988754,0.2175329170547984) node {$b$};
\draw [fill=uuuuuu] (0.,0.) circle (1.5pt);
\draw[color=qqqqff] (-0.2185869548361193,-0.14668761766993682) node {$O$};
\draw [fill=uuuuuu] (1.9940258199265823,0.) circle (1.5pt);
\draw[color=qqqqff] (2.088143098420534,0.1905536181862995) node {$A$};
\draw [fill=qqqqff] (1.0091,0.733156) circle (1.5pt);
\draw[color=qqqqff] (1.1033986897203252,0.91899468763577) node {$P$};
\draw [fill=qqqqff] (-0.385443,1.18627) circle (1.5pt);
\draw[color=qqqqff] (-0.5693178401266046,1.3101945212290043) node {$Q$};
\draw [fill=xdxdff] (-1.24732,0.) circle (1.5pt);
\draw[color=qqqqff] (-1.3517175073130716,0.1905536181862995) node {$R$};
\draw [fill=qqqqff] (-0.385443,-1.18627) circle (1.5pt);
\draw[color=qqqqff] (-0.4614006446526091,-1.3867974690153898) node {$S$};
\draw [fill=qqqqff] (1.0091,-0.733156) circle (1.5pt);
\draw[color=qqqqff] (1.1438676380230735,-0.5783563995659194) node {$T$};
\draw [fill=xdxdff] (0.6161887643200652,1.8964309576832978) circle (1.5pt);
\draw[color=qqqqff] (0.7121988561270916,2.079104538981223) node {$B$};
\draw [fill=xdxdff] (-1.6131996408052047,1.1720605314743078) circle (1.5pt);
\draw[color=qqqqff] (-1.6889587431693074,1.3371738200975032) node {$C$};
\draw [fill=xdxdff] (-1.6131996408052047,-1.1720605314743078) circle (1.5pt);
\draw[color=qqqqff] (-1.702448392603557,-1.3472664173181381) node {$D$};
\draw [fill=xdxdff] (0.6161887643200652,-1.8964309576832978) circle (1.5pt);
\draw[color=qqqqff] (0.7121988561270916,-2.0947695901621119) node {$E$};
\draw [fill=uuuuuu] (-0.3854455825217787,1.186271327055918) circle (1.5pt);
\draw [fill=uuuuuu] (-1.2473221513812598,0.) circle (1.5pt);
\draw [fill=uuuuuu] (-0.3854455825217793,-1.1862713270559175) circle (1.5pt);
\draw [fill=uuuuuu] (-0.38544856018153495,1.1862672286588642) circle (1.5pt);
\draw[] (-0.0285869548361193 ,-2.5582947786169992) node { $(a)$ };
\end{scriptsize}
\end{tikzpicture}\qquad
\begin{tikzpicture}[line cap=round,line join=round,>=triangle 45,x=1.0cm,y=1.0cm]
\clip(-2.2,-3.4) rectangle (2.3,2.3);
\draw(0.,0.) circle (1.9804039991880444cm);
\draw (0.,6.)-- (0.,0.);
\draw (-3.68,0.)-- (4.,0.);
\draw (0.,0.)-- (0.,6.);
\draw (0.,0.)-- (4.,0.);
\draw (0.,0.)-- (-8.,0.);
\draw (0.,0.)-- (0.,-2.1);
\draw (0.,0.89)-- (0.8891187041486748,0.);
\draw [dotted] (0.,0.89)-- (0.89,0.89);
\draw [dotted] (0.89,0.89)-- (0.8891187041486748,0.);
\draw (0.89,0.89)-- (0.,0.);
\draw (0.9263774825946924,1.2567192648773278) node[anchor=north west] {$(a, a)$};
\draw [dotted] (0.,0.89)-- (-0.89,0.89);
\draw [dotted] (-0.89,0.89)-- (-0.89,-0.893);
\draw [dotted] (-0.89,-0.893)-- (0.89,-0.89);
\draw [dotted] (0.89,-0.89)-- (0.8891187041486748,0.);
\begin{scriptsize}
\draw [fill=xdxdff] (0.,6.) circle (1.5pt);
\draw[color=xdxdff] (-2.0791639787173755,3.39288922911565) node {$B_1$};
\draw [fill=xdxdff] (-3.68,0.) circle (1.5pt);
\draw[color=xdxdff] (-2.0791639787173755,3.39288922911565) node {$C_1$};
\draw [fill=xdxdff] (4.,0.) circle (1.5pt);
\draw[color=xdxdff] (4.105793243321508,0.21347346838884507) node {$D_1$};
\draw [fill=xdxdff] (-8.,0.) circle (1.5pt);
\draw[color=qqqqff] (-2.1040031643480535,3.355630450669633) node {$E$};
\draw [fill=xdxdff] (0.,-6.) circle (1.5pt);
\draw[color=xdxdff] (-2.1040031643480535,3.355630450669633) node {$F$};
\draw [fill=xdxdff] (0.,0.89) circle (1.5pt);
\draw[color=qqqqff] (-0.14170749952447215,1.120103743908598) node {$C$};
\draw [fill=xdxdff] (0.8891187041486748,0.) circle (1.5pt);
\draw[color=qqqqff] (1.0381538179327445,-0.1342751304406492) node {$D$};
\draw [fill=qqqqff] (0.89,0.89) circle (1.5pt);
\draw[color=qqqqff] (0.8642795185179968,1.0952645582779197) node {$P$};
\draw [fill=uuuuuu] (0.,0.) circle (1.5pt);
\draw[color=qqqqff] (-0.2185869548361193,-0.14668761766993682) node {$O$};
\draw [fill=qqqqff] (1.9804039991880442,0.) circle (1.5pt);
\draw[color=qqqqff] (2.1559171713132654,-0.1342751304406492) node {$A$};
\draw [fill=uuuuuu] (0.,1.9804039991880442) circle (1.5pt);
\draw[color=qqqqff] (0.1066843567823103,2.163349540397081) node {$B$};
\draw [fill=qqqqff] (-0.89,0.89) circle (1.5pt);
\draw[color=qqqqff] (-1.0234985894135498,1.107684151093259) node {$Q$};
\draw [fill=qqqqff] (-0.89,-0.893) circle (1.5pt);
\draw[color=qqqqff] (-1.0110789965982108,-1.0657445915910804) node {$R$};
\draw [fill=qqqqff] (0.89,-0.89) circle (1.5pt);
\draw[color=qqqqff] (1.000895039486727,-1.0409054059604022) node {$S$};
\draw[] (-0.0205869548361193 ,-2.5582947786169992) node { $(b)$ };
\end{scriptsize}
\end{tikzpicture}
\caption{$(a)$ $P$, $Q$, $R$, $S$ and $T$ form an optimal set of 5-means; $(b)$ $O$, $P$, $Q$, $R$ and $S$ do not form an optimal set of 5-means.} \label{FigB}
\end{figure}
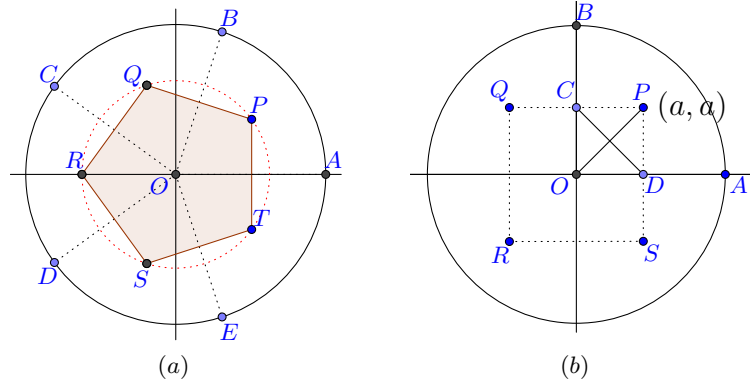

\tbf{Case 2.} In this case, we assume that one of the five optimal quantizers is the center $O$ of the disc, and the other four are the points $P$, $Q$, $R$  and $S$ equidistant from the center of the disc. Due to uniform distribution, the Voronoi regions of each of the points $P$, $Q$, $R$ and $S$ will subtend a central angle of $\frac \pi 2$ radians. As shown in Figure~\ref{FigB}$(b)$, let $ABCD$ be the Voronoi region of the point $P$, where the line $CD$ is the boundary of the Voronoi regions of the points $P$ and $O$. $A$ has the coordinates $(1, 0)$. $P$ being the centroid of its Voronoi region, the line segment $OP$ will be the perpendicular bisector of the line segment $CD$.  Let the equation of the line $CD$ be $x_1+x_2=a$ for some constant $a$. Then, the coordinates of $P$ are $(a, a)$. If $V(AOB)$ is the quantization error in the sector $AOB$ due to the points $O$ and $P$, then
\begin{align*}& V(AOB)\\
&=\int _0^a\int _0^{a-x_1}\frac{\left(x_1-0\right){}^2+\left(x_2-0\right){}^2}{\pi }dx_2dx_1+\int _0^a\int _{a-x_1}^{\sqrt{1-x_1^2}}\frac{\left(x_1-a\right){}^2+\left(x_2-a\right){}^2}{\pi }dx_2dx_1\\
&+\int _a^1\int _0^{\sqrt{1-x_1^2}}\frac{\left(x_1-a\right){}^2+\left(x_2-a\right){}^2}{\pi }dx_2dx_1\\
&=\frac{a^4}{6 \pi }+\frac{a \left(-2 a \left(a^2-3 \sqrt{1-a^2} a+6\right)+9 \sqrt{1-a^2}-8\right)+3 \left(4 a^2+1\right) \sin ^{-1}(a)}{12 \pi }\\
&+\frac{3 \left(4 a^2+1\right) \cos ^{-1}(a)-a \left(2 a \left(2 a^2+3 \sqrt{1-a^2} a-6\right)+9 \sqrt{1-a^2}+8\right)}{12 \pi }.
\end{align*}
It is easy to see that the above expression is minimum when $a=0.467885$, and the minimum value is $0.0307967$, i.e., $V(AOB)=0.0307967$. Thus, if $V_5(\te{Case 2})$ is the quantization error due to the five points in this case, we have
\[V_5(\te{Case 2})=4 V(AOB)=4 \times 0.0307967=0.123187.\]
Since $V_5(\te{Case 1})<V_5(\te{Case 2})$, the five points in Case 1 give the optimal set of five-means and hence, we can deduce the following theorem.

\begin{theorem}
For the uniform distribution on the unit disc, there are uncountably many optimal sets of five-means with quantization error $0.111049$. Any such five points form a regular pentagon inscribed in the circle $x_1^2+x_2^2=0.388951$.
\end{theorem}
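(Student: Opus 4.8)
The plan is to leverage Conjecture~\ref{conj1} to reduce the optimization over all $5$-point subsets of $\D R^2$ to the two candidate configurations already isolated in the text, and then to compare their costs. First I would invoke Conjecture~\ref{conj1}$(iii)$: an optimal set of five-means either forms a regular pentagon centered at the origin, or consists of the center $O$ together with four points forming a square centered at the origin. These are the only two admissible shapes, so it suffices to compute the quantization error in each case and keep the smaller.

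For Case~1 (the regular pentagon), rotational symmetry of the uniform distribution forces the five Voronoi regions to be the five congruent sectors each subtending $\frac{2\pi}{5}$, and by Proposition~\ref{prop10}$(iii)$ each quantizer must be the centroid of its own sector. I would apply Lemma~\ref{lemma12} with $(\gq_1,\gq_2)=(0,\frac{2\pi}{5})$ to get the centroid $\tilde p=g(0,\frac{2\pi}{5})$ and the per-sector error $V(0,\frac{2\pi}{5})$; multiplying the latter by $5$ gives $V_5(\te{Case 1})=5\times 0.0222098=0.111049$. The other four quantizers are obtained by rotating $\tilde p$ through multiples of $\frac{2\pi}{5}$, and one checks that all five lie on the circle $x_1^2+x_2^2=|\tilde p|^2=0.388951$.

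For Case~2, with $O$ among the quantizers, symmetry again forces the remaining four points into a square centered at $O$, each owning a $\frac{\pi}{2}$-Voronoi region; the boundary between $O$'s region and a neighbor's region is a line perpendicular to the segment from $O$ to that neighbor (the perpendicular bisector), so in the wedge $AOB$ of Figure~\ref{FigB}$(b)$ this boundary has the form $x_1+x_2=a$ and the outer quantizer sits at $(a,a)$. I would write $V(AOB)$ as the sum of the three integrals displayed in the text, minimize the resulting closed-form expression in $a$ (the minimizing value is $a=0.467885$ with minimum $0.0307967$), and conclude $V_5(\te{Case 2})=4\times 0.0307967=0.123187$. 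Since $0.111049<0.123187$, Case~1 wins, giving the theorem; the ``uncountably many'' clause follows because the whole configuration can be rotated about the origin by any angle without changing the cost.

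The main obstacle is not the comparison itself but the honest minimization of $V(AOB)$ in Case~2: the integrand splits across the chord $x_1+x_2=a$ and the circular arc, producing an expression involving $a^4$, $\sqrt{1-a^2}$, and both $\sin^{-1}(a)$ and $\cos^{-1}(a)$, so differentiating and solving $\frac{d}{da}V(AOB)=0$ must be handled carefully (in practice numerically) to certify that $a=0.467885$ is the global minimizer on $[0,1]$ and hence that Case~2 genuinely costs more than Case~1. A secondary point worth a remark is that the theorem is stated conditionally on Conjecture~\ref{conj1}; a fully unconditional proof would additionally need to rule out asymmetric or non-sector Voronoi partitions, which the paper does not attempt here.
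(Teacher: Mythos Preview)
Your proposal is correct and follows essentially the same approach as the paper: you invoke Conjecture~\ref{conj1}$(iii)$ to reduce to the two candidate configurations, use Lemma~\ref{lemma12} for the pentagon case, set up and numerically minimize the same $V(AOB)$ integral for the center-plus-square case, and compare. Your added caveats about the numerical minimization and the conditionality on the conjecture are apt and mirror the paper's own reliance on these points.
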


\subsection{Optimal sets of 6-means}

\begin{figure}
\begin{tikzpicture}[line cap=round,line join=round,>=triangle 45,x=1.2cm,y=1.2cm]
\clip(-2.0,-2.0) rectangle (2.3,2.3);
\draw [shift={(0.,0.)},color=qqwuqq,fill=qqwuqq,fill opacity=0.1] (0,0) -- (0.:0.29807022756813895) arc (0.:71.99997284687713:0.29807022756813895) -- cycle;
\fill[color=zzttqq,fill=zzttqq,fill opacity=0.1] (1.11,-0.81) -- (1.1097,0.806241) -- (-0.42752924005170767,1.3054016190506708) -- (-1.3772891589038343,-0.0023411525305764526) -- (-0.427043829855083,-1.3097312529604481) -- cycle;
\draw(0.,0.) circle (2.3764847990256532cm);
\draw (0.,6.)-- (0.,0.);
\draw (-3.68,0.)-- (4.,0.);
\draw (0.,0.)-- (0.,6.);
\draw (0.,0.)-- (4.,0.);
\draw (0.,0.)-- (-8.,0.);
\draw (0.,0.)-- (0.,-2.1);
\draw (0.,0.)-- (1.9804039991880442,0.);
\draw (0.,0.)-- (0.6119793840779486,1.8834758383009789);
\draw (0.2619640029264046,0.8062409990491519)-- (0.8477320000000002,0.);
\draw [dotted] (0.,0.)-- (1.1097,0.806241);
\draw [dotted] (0.6119793840779486,1.8834758383009789)-- (0.6158876622112144,0.8096139235251195);
\draw [dotted] (0.8394403328873186,1.790761755936907)-- (0.8477320000000002,0.);
\draw [dotted] (0.2619640029264046,0.8062409990491519)-- (1.8017021729187577,0.8205965386593443);
\draw [color=zzttqq] (1.11,-0.81)-- (1.1097,0.806241);
\draw [color=zzttqq] (1.1097,0.806241)-- (-0.42752924005170767,1.3054016190506708);
\draw [color=zzttqq] (-0.42752924005170767,1.3054016190506708)-- (-1.3772891589038343,-0.0023411525305764526);
\draw [color=zzttqq] (-1.3772891589038343,-0.0023411525305764526)-- (-0.427043829855083,-1.3097312529604481);
\draw [color=zzttqq] (-0.427043829855083,-1.3097312529604481)-- (1.11,-0.81);
\draw [dotted] (0.,0.) circle (1.6489414786462253cm);
\begin{scriptsize}
\draw [fill=xdxdff] (0.,6.) circle (1.5pt);
\draw[color=xdxdff] (-4.047371780286109,2.6204316125067324) node {$B_1$};
\draw [fill=xdxdff] (-3.68,0.) circle (1.5pt);
\draw[color=xdxdff] (-3.5903307646816294,0.16632007219572978) node {$C_1$};
\draw [fill=xdxdff] (4.,0.) circle (1.5pt);
\draw[color=xdxdff] (4.089945432324085,0.16632007219572978) node {$D_1$};
\draw [fill=xdxdff] (-8.,0.) circle (1.5pt);
\draw[color=xdxdff] (-4.067243128790651,2.5906245897499187) node {$E$};
\draw [fill=xdxdff] (0.,-6.) circle (1.5pt);
\draw[color=xdxdff] (-4.067243128790651,2.5906245897499187) node {$F$};
\draw [fill=uuuuuu] (0.,0.) circle (1.5pt);
\draw[color=qqqqff] (-0.12290910213440332,-0.14181448112013694) node {$O$};
\draw [fill=uuuuuu] (1.9804039991880442,0.) circle (1.5pt);
\draw[color=qqqqff] (2.1226819303743674,-0.11187880686786568) node {$A$};
\draw [fill=xdxdff] (0.6119793840779486,1.8834758383009789) circle (1.5pt);
\draw[color=qqqqff] (0.6621378152904868,2.0540981801272706) node {$B$};
\draw[color=qqwuqq] (0.3441962392178052,0.24580546621389993) node {$72\textrm{\degre}$};
\draw [fill=xdxdff] (0.8477320000000002,0.) circle (1.5pt);
\draw[color=qqqqff] (0.9005939973449979,-0.19117122364284963) node {$D$};
\draw [fill=xdxdff] (0.2619640029264046,0.8062409990491519) circle (1.5pt);
\draw[color=qqqqff] (0.13554707992010787,0.9413026638728885) node {$C$};
\draw [fill=qqqqff] (1.1097,0.806241) circle (1.5pt);
\draw[color=qqqqff] (1.1887285506608656,0.9810453608819736) node {$P$};
\draw [fill=qqqqff] (0.8394403328873186,1.790761755936907) circle (1.5pt);
\draw [fill=qqqqff] (0.6158876622112144,0.8096139235251195) circle (1.5pt);
\draw [fill=qqqqff] (1.8017021729187577,0.8205965386593443) circle (1.5pt);
\draw [fill=qqqqff] (1.11,-0.81) circle (1.5pt);
\draw[color=qqqqff] (1.2284712476699509,-0.8669900500404819) node {$T$};
\draw [fill=uuuuuu] (-0.42752924005170767,1.3054016190506708) circle (1.5pt);
\draw[color=qqqqff] (-0.3612366326934571,1.4480220507387231) node {$Q$};
\draw [fill=uuuuuu] (-1.3772891589038343,-0.0023411525305764526) circle (1.5pt);
\draw[color=qqqqff] (-1.5137748459569278,0.16632007219572978) node {$R$};
\draw [fill=uuuuuu] (-0.427043829855083,-1.3097312529604481) circle (1.5pt);
\draw[color=qqqqff] (-0.3811079811979997,-1.423387808167673) node {$S$};
\end{scriptsize}
\end{tikzpicture}
\caption{$O$, $P$, $Q$, $R$, $S$ and $T$ form an optimal set of 6-means.} \label{FigC}
\end{figure}
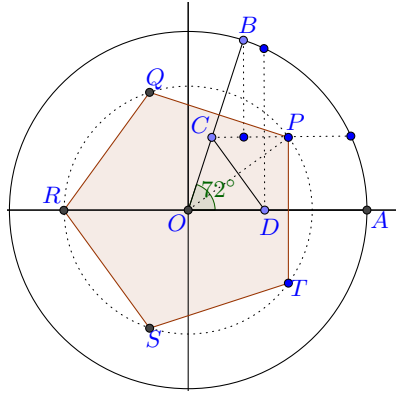

In this section, we calculate the optimal sets of 6-means. By  Conjecture~\ref{conj1}, one of the following two cases can arise:

\tbf{Case 1.} In this case, we assume that the six optimal quantizers are equidistant from the center of the disc. So, the Voronoi regions of the points partition the disc into six sectors each making a central angle of $\frac{\pi}{3}$ radians. Let $P$ be one of the six optimal quantizers whose Voronoi region is the sector
$AOB$ where $O$ is the origin, and $A$, $B$ are the points $(1, 0)$, $(\frac 12, \frac{\sqrt{3}}{2})$. Let $V(P)$ be the quantization error due to the point $P$ in the sector $AOB$. Then, we have
\begin{align*}
\tilde p&=\frac{\int _{\frac{1}{2}}^1\int _0^{\sqrt{1-x_1^2}}\frac{(x_1,x_2)}{\pi }dx_2dx_1+\int _0^{\frac{1}{2}}\int _0^{x_1 \tan \left(\frac{\pi }{3}\right)}\frac{(x_1,x_2)}{\pi }dx_2dx_1}{\int _{\frac{1}{2}}^1\int _0^{\sqrt{1-x_1^2}}\frac{1}{\pi }dx_2dx_1+\int _0^{\frac{1}{2}}\int _0^{x_1 \tan \left(\frac{\pi }{3}\right)}\frac{1}{\pi }dx_2dx_1}=(0.551329,0.31831),
\end{align*}
and
\begin{align*}
V(P)&=\int _{\frac{1}{2}}^1\int _0^{\sqrt{1-x_1^2}}\frac{\left(x_1-0.551329\right){}^2+\left(x_2-0.31831\right){}^2}{\pi }dx_2dx_1\\
&+\int _0^{\frac{1}{2}}\int _0^{x_1 \tan \left(\frac{\pi }{3}\right)}\frac{\left(x_1-0.551329\right){}^2+\left(x_2-0.31831\right){}^2}{\pi }dx_2dx_1\\
&=0.0157859.
\end{align*}
If $V_6(\te{Case 1})$ is the quantization error due to the six points in this case, we have
$V_6(\te{Case 1})=6\times 0.0157859=0.0947153.$

\tbf{Case 2.} In this case, we assume that one of the six optimal quantizers is the center $O$ of the disc, and the other five are equidistant from the center of the disc. Due to the uniform distribution, the Voronoi regions of each of these five points will subtend a central angle of $\frac {2\pi} 5$ radians. As shown in Figure~\ref{FigC}, let $ABCD$ be the Voronoi region of such a point $P$, where $A$ has the coordinates $(1, 0)$ and the line $CD$ is the boundary of the Voronoi regions of the points $P$ and $O$. Let $OC=OD=a$ for some constant $a$. Then, the coordinates of $C$ and $D$ are, respectively, $(a\cos\frac{2\pi}{5}, a\sin\frac{2\pi}{5})$ and $(a, 0)$, and the equation of the line $CD$ is $x_2=-(x_1-a)\cot{\frac \pi 5}$. Thus, we have the coordinates of $P$ as $(2a \cos^2\frac\pi 5, a\sin\frac{2\pi}{5})$. Let $\tilde m$ represent the position vector of the centroid of the sector $AOB$, and let $V(AOB)$ be the quantization error of the sector $AOB$ due to the points $O$ and $P$. Then,
\begin{align*} \tilde m
&=\frac{\int _{\cos \left(\frac{2 \pi }{5}\right)}^1\int _0^{\sqrt{1-x_1^2}}\frac{(x_1,x_2)}{\pi }dx_2dx_1+\int _0^{\cos \left(\frac{2 \pi }{5}\right)}\int _0^{x_1 \tan \left(\frac{2 \pi }{5}\right)}\frac{(x_1,x_2)}{\pi }dx_2dx_1}{\int _{\cos \left(\frac{2 \pi }{5}\right)}^1\int _0^{\sqrt{1-x_1^2}}\frac{1}{\pi }dx_2dx_1+\int _0^{\cos \left(\frac{2 \pi }{5}\right)}\int _0^{x_1 \tan \left(\frac{2 \pi }{5}\right)}\frac{1}{\pi }dx_2dx_1}=(0.504551,0.366578), \\
\tilde p &=\frac{(\te{Area of the sector } AOB) \,\tilde m-(\te{Area of the } \tri  OCD)(\te{Centroid of the } \tri OCD)}{\te{Area of the sector } AOB-\te{Area of the } \tri OCD}\\
&=\frac{\frac{1}{5} \pi  (0.504551, \, 0.366578)-\frac{a^2 \sin \left(\frac{2 \pi }{5}\right) \left(2 a \cos ^2\left(\frac{\pi }{5}\right), \,  a \sin \left(\frac{2 \pi }{5}\right)\right)}{2\cdot 3}}{\frac{\pi }{5}-\frac{1}{2} a^2 \sin \left(\frac{2 \pi }{5}\right)}\\
&=\left (\frac{0.666667\, -0.436339 a^3}{1.32131\, -a^2},\frac{0.484362\, -0.317019 a^3}{1.32131\, - a^2}\right).
\end{align*}
Since the line segments $OP$ and $CD$ bisect each other, we have
\[\frac{0.484362\, -0.317019 a^3}{1.32131\, -a^2}=a \sin \left(\frac{2 \pi }{5}\right) \te{which yields } a=0.423866,\]
and hence, using Figure~\ref{FigC}, we have
\begin{align*}& V(AOB)=\int _0^{a \cos \left(\frac{2 \pi }{5}\right)}\int _0^{x_1 \tan \left(\frac{2 \pi }{5}\right)}\frac{\left(x_1-0\right){}^2+\left(x_2-0\right){}^2}{\pi }dx_2dx_1\\
&+\int _{a \cos \left(\frac{2 \pi }{5}\right)}^a\int _0^{\cot \left(\frac{\pi }{5}\right) \left(-\left(x_1-a\right)\right)}\frac{\left(x_1-0\right){}^2+\left(x_2-0\right){}^2}{\pi }dx_2dx_1\\
&+\int _{a \cos \left(\frac{2 \pi }{5}\right)}^a\int _{\cot \left(\frac{\pi }{5}\right) \left(-\left(x_1-a\right)\right)}^{a \sin \left(\frac{2 \pi }{5}\right)}\frac{\left(x_2-a \sin \left(\frac{2 \pi }{5}\right)\right){}^2+\left(x_1-2 a \cos ^2\left(\frac{\pi }{5}\right)\right){}^2}{\pi }dx_2dx_1\\
&+\int _a^1\int _0^{\sqrt{1-x_1^2}}\frac{\left(x_2-a \sin \left(\frac{2 \pi }{5}\right)\right){}^2+\left(x_1-2 a \cos ^2\left(\frac{\pi }{5}\right)\right){}^2}{\pi }dx_2dx_1\\
&+\int _{a \cos \left(\frac{2 \pi }{5}\right)}^{\cos \left(\frac{2 \pi }{5}\right)}\int _{a \sin \left(\frac{2 \pi }{5}\right)}^{x_1 \tan \left(\frac{2 \pi }{5}\right)}\frac{\left(x_2-a \sin \left(\frac{2 \pi }{5}\right)\right){}^2+\left(x_1-2 a \cos ^2\left(\frac{\pi }{5}\right)\right){}^2}{\pi }dx_2dx_1\\
&+\int _{\cos \left(\frac{2 \pi }{5}\right)}^a\int _{a \sin \left(\frac{2 \pi }{5}\right)}^{\sqrt{1-x_1^2}}\frac{\left(x_2-a \sin \left(\frac{2 \pi }{5}\right)\right){}^2+\left(x_1-2 a \cos ^2\left(\frac{\pi }{5}\right)\right){}^2}{\pi }dx_2dx_1\\
&=0.018719.
\end{align*}
Thus, if $V_6(\te{Case 2})$ is the quantization error due to the six points in this case, we have
\[V_6(\te{Case 2})=5\times 0.018719=0.093595.\]
Since $V_6(\te{Case 2})<V_6(\te{Case 1})$, the six points in Case 2 give the optimal set of six-means. Moreover, the coordinates of $P$ are $(0.554847, 0.40312)$, and hence, we can deduce the following theorem.

\begin{theorem}
For the uniform distribution on the unit disc, there are uncountably many optimal sets of six-means with quantization error $0.093595$. One of such six points is the center of the disc, and the other five form a regular pentagon inscribed in the circle $x_1^2+x_2^2=0.470361$.
\end{theorem}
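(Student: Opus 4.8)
The plan is to follow the two-case dichotomy supplied by Conjecture~\ref{conj1}: either all six optimal quantizers are equidistant from the origin, or one of them is the center $O$ and the remaining five are equidistant from $O$. In each case the rotational symmetry of the disc together with Proposition~\ref{prop10}$(iii)$ determines the configuration up to a rotation, so it suffices to compute one representative distortion in each case and compare the two numbers; the smaller one is $V_6$, and the corresponding family, rotated arbitrarily about $O$, furnishes uncountably many optimal sets.

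For Case~1 the six Voronoi regions are congruent circular sectors of central angle $\frac{\pi}{3}$, and by Proposition~\ref{prop10}$(iii)$ the quantizer in the sector $AOB$ with $A=(1,0)$, $B=(\frac12,\frac{\sqrt3}{2})$ is its centroid. Describing the sector by splitting at $x_1=\frac12$ into the part below the line $x_2=x_1\tan\frac{\pi}{3}$ and the part below the arc $x_1^2+x_2^2=1$ reduces the centroid and the distortion to the two pairs of iterated integrals recorded above; they evaluate to $\tilde p=(0.551329,0.31831)$ and per-point error $0.0157859$, so $V_6(\te{Case 1})=6\times 0.0157859=0.0947153$. This step is mechanical once the region is written out.

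For Case~2 the content is the geometry of a non-central Voronoi region. Each of the five outer quantizers $P$ has Voronoi region $ABCD$ equal to the circular sector of angle $\frac{2\pi}{5}$ with the isosceles triangle $OCD$ removed, where $CD$ is the perpendicular bisector of $OP$; writing $OC=OD=a$ gives $C=(a\cos\frac{2\pi}{5},a\sin\frac{2\pi}{5})$, $D=(a,0)$, and the line $CD:\ x_2=-(x_1-a)\cot\frac{\pi}{5}$, whence $P=(2a\cos^2\frac{\pi}{5},a\sin\frac{2\pi}{5})$. Imposing Proposition~\ref{prop10}$(iii)$, i.e.\ that $P$ be the area-weighted centroid of the sector minus the triangle and that $OP$ and $CD$ bisect each other, produces a single self-consistency equation in $a$, namely $\frac{0.484362-0.317019 a^3}{1.32131-a^2}=a\sin\frac{2\pi}{5}$, whose relevant root is $a=0.423866$. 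With $a$ fixed, $V(AOB)$ is the sum of six iterated integrals — two over the inner triangle $OCD$ using $O$ as the reference point and four over the pieces of the lens-shaped part of the region using $P$ as the reference point, the decomposition dictated by where the boundary arc, the chord $CD$, and the bounding radii cross — and it evaluates to $0.018719$, so $V_6(\te{Case 2})=5\times 0.018719=0.093595$.

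Comparing, $V_6(\te{Case 2})<V_6(\te{Case 1})$, so Case~2 realizes the infimum; the five outer points satisfy $|\tilde p|^2=(0.554847)^2+(0.40312)^2=0.470361$, giving the stated circle, and rotation about $O$ yields uncountably many optimal sets. The main obstacle is the Case~2 bookkeeping: one must partition $ABCD$ into regions on which exactly one of $O,P$ is the nearest quantizer \emph{and} the outer boundary is a single smooth curve, so that the six integrals above really tile $ABCD$ without overlap or gap; once the decomposition is correct the evaluation is routine. A secondary issue is rigor in the minimization — one should either check that the critical configuration solving the self-consistency equation is a genuine minimum of the distortion in $a$ (a second-order or boundary analysis), or simply invoke the existence of an optimal set which, under Conjecture~\ref{conj1}, must belong to one of the two candidate families, so that comparing the two values of $V_6$ is conclusive.
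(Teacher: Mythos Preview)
Your proposal is correct and follows essentially the same approach as the paper: the same two-case split from Conjecture~\ref{conj1}, the same sector-centroid computation in Case~1, the same parametrization by $a=OC=OD$ and moment-subtraction self-consistency equation in Case~2 (yielding $a=0.423866$), and the same six-integral decomposition of $V(AOB)$ into two pieces over the inner triangle $OCD$ with reference $O$ and four pieces over the remaining region with reference $P$. Your closing remarks on checking that the partition tiles $AOB$ and that the critical $a$ is a genuine minimizer are apt cautions that the paper leaves implicit.
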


\section{Optimal sets and quantization error for uniform distribution on a square}

In this section, we determine the optimal sets of $n$-means and the $n$th quantization errors for all $1\leq n\leq 5$ for a uniform distribution defined on a region with uniform density whose boundary forms a square. Let $X:=(X_1, X_2)$ be a bivariate continuous random variable with the uniform distribution taking values on the region whose boundary forms a square with vertices $ O (0, 0), \, A (1, 0), \, B (1, 1)$ and $C (0, 1)$. In the sequel by the term `square' we mean the square region and denote it by $J$. As mentioned in the previous section, let $f(x_1, x_2)$ denote the pdf of the random variable $X$, $f_1(x_1)$ and $f_2(x_2)$ represent the marginal pdfs of the random variables $X_1$ and $X_2$, respectively. We also keep the other notations same as before. Thus, we have
\[f(x_1, x_2)=\left\{\begin{array}{ccc}
1 & \te{ for }  (x_1, x_2) \in J, \\
 0  & \te{ otherwise},
\end{array}\right.
\]
\[f_1(x_1)=\left\{\begin{array}{ccc}
1 & \te{ for }   0<x_1<1, \\
0   & \te{ otherwise},
\end{array}\right.
\te{ and }
f_2(x_2)=\left\{\begin{array}{ccc}
1 & \te{ for }   0<x_2<1, \\
0   & \te{ otherwise}.
\end{array}\right.
\]
Let us now prove the following lemma.
\begin{lemma}
Let $X:=(X_1, X_2)$ be a bivariate continuous random variable with uniform distribution taking values on the square $J$. Then,
\[E(X)=(E(X_1), E(X_2))=(\frac 1 2, \frac12) \te{ and } V(X)=V(X_1)+V(X_2)=\frac 1 {6}.\]
\end{lemma}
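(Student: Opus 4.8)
The plan is to imitate the proof of the corresponding lemma for the disc, exploiting the fact that the uniform distribution on $J$ factors as a product of two independent uniform distributions on $[0,1]$; this product structure is visible in the identity $f(x_1,x_2)=f_1(x_1)f_2(x_2)$ with $f_1,f_2$ the marginal densities written just above the statement, and it will reduce every two-dimensional integral to a pair of one-dimensional ones.

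First I would compute the one-dimensional first moments. Since $f_1$ and $f_2$ are both the density of the uniform distribution on $[0,1]$, direct integration gives $E(X_1)=\int_0^1 x_1\,dx_1=\frac12$ and likewise $E(X_2)=\frac12$. Writing $E(X)=\iint(x_1 i+x_2 j)f(x_1,x_2)\,dx_1dx_2$ and applying Fubini together with $\int f_1=\int f_2=1$, the two coordinates separate, yielding $E(X)=\frac12\,i+\frac12\,j=(\frac12,\frac12)$.

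Next I would compute the second moments $E(X_1^2)=\int_0^1 x_1^2\,dx_1=\frac13$ and $E(X_2^2)=\frac13$, so that $V(X_1)=E(X_1^2)-[E(X_1)]^2=\frac13-\frac14=\frac1{12}$ and similarly $V(X_2)=\frac1{12}$. Finally, expanding $V(X)=E\|X-E(X)\|^2=\iint\bigl((x_1-E(X_1))^2+(x_2-E(X_2))^2\bigr)f(x_1,x_2)\,dx_1dx_2$ and once more separating variables via Fubini gives $V(X)=\int(x_1-E(X_1))^2 f_1(x_1)\,dx_1+\int(x_2-E(X_2))^2 f_2(x_2)\,dx_2=V(X_1)+V(X_2)=\frac16$, which is the assertion.

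There is no substantive obstacle here: the computation is entirely routine and parallel to the disc case, the only point worth flagging being the appeal to the product structure of the square's uniform law. I would also record, as in Corollary~\ref{cor1}, the consequence $E\|X-(a,b)\|^2=\frac16+(a-\frac12)^2+(b-\frac12)^2$ for every $(a,b)\in\D R^2$, since that is the form actually used to identify optimal sets in the sequel.
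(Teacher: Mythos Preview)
Your proof is correct and follows essentially the same route as the paper's: compute the one-dimensional moments $E(X_i)=\frac12$, $E(X_i^2)=\frac13$, deduce $V(X_i)=\frac1{12}$, and separate variables to obtain $E(X)=(\frac12,\frac12)$ and $V(X)=V(X_1)+V(X_2)=\frac16$. The consequence $E\|X-(a,b)\|^2=\frac16+(a-\frac12)^2+(b-\frac12)^2$ that you flag is exactly what the paper records in Note~\ref{note1}.
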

\begin{proof}
$E(X_1)  =\int_0^1 x_1 \, dx_1=\frac 1 2$, $E(X_2) =\int_0^1 x_2 \, dx_2=\frac 1 2$, $E(X_1^2) =\int_0^1 x_1^2 \, dx_1=\frac 1 3$, and $E(X_2^2) =\int_0^1 x_2^2 \, dx_2=\frac 1 3$.  Then, \begin{align*}
E(X)& =\iint(x_1 i+x_2 j) f(x_1, x_2) dx_1dx_2 =i \int x_1 f_1(x_1) dx_1+j \int x_2 f_2(x_2) dx_2\\
& =(E(X_1), E(X_2))=(\frac 1 2, \frac{1}{2}),\\
V(X_1)& =E(X_1^2)-[E(X_1)]^2=\frac{1}{12} \te{ and } V(X_2)=E(X_2^2)-[E(X_2)]^2=\frac{1}{12}.
\end{align*}
Thus, we have
\[V(X)=E\|X-E(X)\|^2=\iint \Big((x_1-E(X_1))^2 +(x_2-E(X_2))^2\Big)f(x_1, x_2)\, dx_1 dx_2,\]
implying
\[V(X)=\int (x_1-E(X_1))^2f_1(x_1)\,dx_1+\int (x_2-E(X_2))^2f_2(x_2)\,dx_2=V(X_1)+V(X_2)=\frac{1}{6}.\]
Thus, the lemma is yielded.
\end{proof}
\begin{note} \label{note1} For any $(a, b) \in \D R^2$, since \begin{align*} & E\|X-(a, b)\|^2=V(X_1)+V(X_2)+(a-\frac 1 2)^2+(b-\frac 1 2)^2=\frac 1 6+\|(a, b)-(\frac 12, \frac 1 2)\|^2,
\end{align*}
we see that the optimal set of one-mean consists of the expected vector $(\frac 12, \frac 1 2)$ of the random variable $X$, which is the centroid of the square $J$ and the corresponding quantization error is  $\frac 1 {6}$ which is the expected squared distance of the random variable $X$.
\end{note}

\begin{figure}
\begin{tikzpicture}[line cap=round,line join=round,>=triangle 45,x=3.0cm,y=3.0cm]
\clip(-0.1,-0.2) rectangle (1.2,1.2);
\draw (0.,1.)-- (1.,1.);
\draw (1.,0.)-- (1.,1.);
\draw (1.,0.)-- (0.,0.);
\draw (0.,0.)-- (0.,1.);
\draw (0.71,1.)-- (0.29215656532836076,0.);
\begin{scriptsize}
\draw [fill=xdxdff] (0.,1.) circle (1.5pt);
\draw[color=qqqqff] (-4.2665609340125296E-4,1.0663184606346283) node {$C$};
\draw [fill=qqqqff] (1.,1.) circle (1.5pt);
\draw[color=qqqqff] (1.0454027311163012,1.0663184606346283) node {$B$};
\draw [fill=xdxdff] (1.,0.) circle (1.5pt);
\draw[color=qqqqff] (1.0178530809640356, -0.06288855043308171) node {$A$};
\draw [fill=xdxdff] (0.,0.) circle (1.5pt);
\draw[color=qqqqff] (-0.037777705636604916,-0.06288855043308171) node {$O$};
\draw [fill=xdxdff] (0.71,1.) circle (1.5pt);
\draw[color=qqqqff] (0.7528195096945393,1.0663184606346283) node {$F$};
\draw [fill=xdxdff] (0.29215656532836076,0.) circle (1.5pt);
\draw[color=qqqqff] (0.27970621548062624,-0.06488855043308171) node {$E$};
\draw [fill=qqqqff] (0.2859313904044935,0.5883044667252469) circle (1.5pt);
\draw[color=qqqqff] (0.32950761487156444,0.6754569156593887) node {$P$};
\draw [fill=qqqqff] (0.71,0.41) circle (1.5pt);
\draw[color=qqqqff] (0.7528195096945393,0.49492684286723787) node {$Q$};
\end{scriptsize}
\end{tikzpicture}
\caption{Two points $P$ and $Q$.} \label{FigD}
\end{figure}
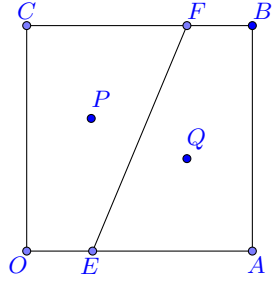

\subsection{Optimal sets of 2-means}
In this subsection, we investigate the optimal sets of two-means and the corresponding quantization error.
Let us divide $J$ by a straight line $\ell$ into two regions as shown in Figure~\ref{FigD}. Let $\ell$ intersect the side $OA$ at the point $E$ and the side $BC$ at the point $F$. It might be that the line $\ell$ is a diagonal of the square. Thus, the square $J$ is divided into two regions: the quadrilateral $OEFC$ and the quadrilateral $EABF$.  Let the position vectors of $A, \, B, \, C, \, E, \, F$ be denoted, respectively, by $\tilde a$, $\tilde b$, $\tilde c$, $\tilde e$ and $\tilde f$. We know $\tilde a=(1, 0)$, $\tilde b=(1, 1)$ and $\tilde c=(0, 1)$. Let the lengths of $OE$ and $CF$ be, respectively, $\ga$ and $\gb$. Then, we have  $ \tilde e=\ga \tilde a, \  \tilde f=\gb \tilde b+(1-\gb)\tilde c$. Now, the area of the triangles $OEC$, $ECF$, $EAF$ and $ABF$ are, respectively, $\frac 1 2 \ga$, $\frac 12 \gb$, $\frac 1 2 (1-\ga)$ and $\frac 1 2(1-\gb)$. By Remark~\ref{rem1}, we know that the points in an optimal set are the centroids of their corresponding Voronoi regions. Let $\tilde p$ and $\tilde q$ be the position vectors of the centroids $P$ and $Q$ of the quadrilaterals $OEFC$ and $EABF$, respectively.
Then, taking moments about the origin, we have
\[\tilde p=\frac{\frac 1 2 \alpha \frac 1 3  (\tilde c+\tilde e)+\frac 1 2\beta \frac 13 (\tilde c+\tilde e+\tilde f)}{\frac 1 2 \ga+\frac 1 2\gb}=\Big(\frac{\alpha ^2+\alpha  \beta +\beta ^2}{3 (\alpha +\beta )},\frac{\alpha +2 \beta }{3 (\alpha +\beta )}\Big ),\]
and
\[\tilde q=\frac{\frac 1 2 (1-\ga)\frac{1}{3} (\tilde a+\tilde e+\tilde f)+\frac 1 2 (1-\gb)\frac{1}{3} (\tilde a+\tilde b+\tilde f)}{\frac{1}{2} (2-\alpha -\beta)}=\Big(\frac{\alpha ^2+\alpha  \beta +\beta ^2-3}{3 (\alpha +\beta -2)},\frac{\alpha +2 \beta -3}{3 (\alpha +\beta -2)}\Big).\]
If $\tilde p$ and $\tilde q$ are the optimal quantizers, we must have,
\begin{equation*}\rho(\tilde p, \tilde e)=\rho(\tilde q,\tilde e) \te{ and } \rho(\tilde p,\tilde f)=\rho(\tilde q,\tilde f).\end{equation*}
Solving the above two equations, we have three possible sets of solutions for $\ga$ and $\gb$: $\{\alpha=1,\beta =0\},\, \{\alpha = 0,\beta =1\}, \, \{\alpha = \frac{1}{2}, \beta = \frac{1}{2}\}$. If  $\{\alpha=1,\beta =0\}$, then $\tilde p= (\frac{1}{3},\frac{1}{3})$, $\tilde q= (\frac{2}{3},\frac{2}{3})$, and the corresponding distortion error is
\[\int _0^1\int _0^{1-x_1}\Big((x_1-\frac{1}{3}){}^2+(x_2-\frac{1}{3}){}^2\Big)dx_2dx_1+\int _0^1\int _{1-x_1}^1\Big((x_1-\frac{2}{3}){}^2+(x_2-\frac{2}{3}){}^2\Big)dx_2dx_1=\frac 1 9.\]
Similarly, if  $\{\alpha=0,\beta =1\}$, then $\tilde p= (\frac{1}{3},\frac{2}{3})$,  $\tilde q= (\frac{2}{3},\frac{1}{3})$, and the corresponding distortion error is $\frac 1 9$. If $\{\alpha = \frac{1}{2}, \beta = \frac{1}{2}\}$, then $\tilde p=(\frac 1 4, \frac 1 2)$ and $\tilde q= (\frac 3 4, \frac 12)$, and the corresponding distortion error is
\begin{align*} \int_0^{\frac{1}{2}}\int_0^1\Big((x_1-\frac{1}{4})^2+(x_2-\frac{1}{2})^2\Big)dx_2dx_1
+\int_{\frac{1}{2}}^1\int_0^1\Big((x_1-\frac{3}{4})^2+(x_2-\frac{1}{2})^2\Big)dx_2dx_1=\frac{5}{48}.\end{align*}
Due to symmetry, we can say that the points $(\frac 12, \frac 14)$ and $(\frac 12, \frac 34)$ will also give the same distortion error $\frac 5{48}$.
Since $\frac{5}{48}<\frac 1 9$, we can deduce the following theorem.
\begin{theorem}
For the uniform distribution on the square $J$ with vertices $(0, 0)$, $(1, 0)$, $(1, 1)$ and $(0, 1)$, there are two optimal sets of two-means $\set{(\frac 1 4, \frac 1 2),  ( \frac 3 4, \frac 12)}$ and $\set{(\frac 1 2, \frac 1 4),  ( \frac 12, \frac 34)}$ with quantization error $\frac{5}{48}$.
\end{theorem}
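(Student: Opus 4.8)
The plan is to carry out the strategy already set up before the statement: narrow the optimal partition down to an explicit two-parameter family of candidates, read off the stationarity equations, solve them, and compare the resulting costs.

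First I would invoke Proposition~\ref{prop10} together with Remark~\ref{rem1}. If $\set{\tilde p,\tilde q}$ is an optimal set of two-means, then both Voronoi regions have positive probability, they partition $J$ up to a Lebesgue-null set, each is convex (being a half-plane intersected with the convex set $J$), and $\tilde p,\tilde q$ are their centroids. Consequently the common boundary of the two regions inside $J$ is a chord $\ell$ of the square, and $\ell$ is the perpendicular bisector of the segment $PQ$. Up to the $8$-element dihedral symmetry group of $(J,P)$ there are only two combinatorial types of such a chord: one joining a pair of opposite sides, and one cutting a triangle off a corner. For the opposite-sides type I take $\ell$ to meet $OA$ at $E$ and $BC$ at $F$ with $|OE|=\alpha$ and $|CF|=\beta$, exactly as in Figure~\ref{FigD}, so that $\tilde p$ and $\tilde q$ are the centroids of the quadrilaterals $OEFC$ and $EABF$ --- the explicit rational functions of $(\alpha,\beta)$ displayed above. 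The corner type can be handled by the same recipe with a triangle and a pentagon, and one checks that its stationarity equations produce no new optimal configuration (the critical points either degenerate to a diagonal, already covered, or fall outside $J$).

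Next, since $E$ and $F$ are two distinct points of $\ell$, the requirement that $\ell$ be the perpendicular bisector of $PQ$ is equivalent to $E$ and $F$ each being equidistant from $P$ and $Q$, i.e., to $\rho(\tilde p,\tilde e)=\rho(\tilde q,\tilde e)$ and $\rho(\tilde p,\tilde f)=\rho(\tilde q,\tilde f)$. Substituting the centroid formulas and clearing denominators turns this into a polynomial system in $(\alpha,\beta)\in[0,1]^2$, which I expect to factor so that its only solutions in the unit square are $\set{\alpha=1,\beta=0}$, $\set{\alpha=0,\beta=1}$, and $\set{\alpha=\tfrac12,\beta=\tfrac12}$. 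For each solution I would compute $\tilde p,\tilde q$ and evaluate $V(P;\set{\tilde p,\tilde q})$ by integrating $\|x-\tilde p\|^2$ over one quadrilateral and $\|x-\tilde q\|^2$ over the other: the first two solutions give cost $\tfrac19$ and the third gives $\tfrac5{48}$. Applying the $90^\circ$ rotation to the third configuration yields the second set $\set{(\tfrac12,\tfrac14),(\tfrac12,\tfrac34)}$, with the same cost $\tfrac5{48}$. Since an optimal set of two-means exists (because $\int\|x\|^2\,dP(x)<\infty$), since every optimal set must be one of these finitely many stationary configurations, and since $\tfrac5{48}<\tfrac19$, it follows that the two configurations of cost $\tfrac5{48}$ are exactly the optimal sets and that $V_2=\tfrac5{48}$.

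The main obstacle is not any single computation but making the reduction airtight: one has to be certain that the Voronoi boundary is a single straight chord --- this is precisely where convexity of $J$ and parts $(i)$ and $(iv)$ of Proposition~\ref{prop10} enter --- and one has to genuinely dispose of the corner-triangle type rather than tacitly assume the chord meets opposite sides. Once that is settled, the remaining steps (solving the polynomial system, confirming the three listed solutions are the only ones in $[0,1]^2$, and comparing the two numbers $\tfrac19$ and $\tfrac5{48}$) are routine, if a bit tedious.
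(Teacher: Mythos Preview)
Your proposal is correct and follows essentially the same approach as the paper: parametrize the chord by $(\alpha,\beta)$, compute the two centroids, impose the equidistance conditions $\rho(\tilde p,\tilde e)=\rho(\tilde q,\tilde e)$ and $\rho(\tilde p,\tilde f)=\rho(\tilde q,\tilde f)$, solve to obtain the three solutions $(1,0)$, $(0,1)$, $(\tfrac12,\tfrac12)$, evaluate the costs, and compare $\tfrac19$ with $\tfrac5{48}$. If anything, your write-up is more scrupulous than the paper's: the paper simply posits that the dividing line meets the pair of opposite sides $OA$ and $BC$ (allowing the diagonal as a degenerate case) without separately disposing of a chord that cuts a triangle off a corner, whereas you explicitly flag that case and the convexity argument that reduces the Voronoi boundary to a single chord.
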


We now give the following note.
\begin{note} \label{note101} With respect to the uniform distribution the unit square has four lines of maximum symmetry: the two diagonals, and the two lines which bisect the opposite sides of the square. By the maximum symmetry it is meant that with respect to any of the above lines the unit square is geometrically symmetric, also symmetric with respect to the uniform distribution. By the `symmetric with respect to the uniform distribution' it is meant that if two regions of similar geometrical shape are equidistant from any of the above four lines and are in opposite sides of the line, then they have the same probability. This motivates us to give the following conjecture.

\begin{conj} \label{conj2} $(i)$ The points in an optimal sets of three-means are symmetrically distributed over the square with respect to one of the two diagonals of the square, or with respect to one of the two lines which are perpendicular bisectors of the opposite sides of the square; $(ii)$  the two points in an optimal set of four-means lie on a line of symmetry of the square, and the other two are in opposite sides and equidistant from the line of symmetry; $(iii)$ one point in an optimal set of five-means is at the center $(\frac 12, \frac 12)$ of the
square; and the other four lie on the lines $x_1=\frac 12$ and $x_2=\frac 12$ and equidistant from the center of the square, or lie on the two diagonals of the square and equidistant from the center of the square.
\end{conj}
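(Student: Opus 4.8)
The plan is to handle this the way Conjecture~\ref{conj1} was handled for the disc: rather than a computation-free proof, one exploits the symmetry of the uniform law on $J$ together with the self-consistency conditions of Proposition~\ref{prop10} to cut the possible optimal configurations down to a finite list, and then verifies that the claimed symmetric configurations are the cost-minimizers for $n=3,4,5$. The organizing observation is that the uniform measure on $J$ is invariant under the dihedral group generated by the two diagonal reflections and the two mid-line reflections; hence if $\ga$ is an optimal set of $n$-means, so is $g\ga$ for every $g$ in this group, with the same quantization error, and the induced Voronoi partition is carried to another optimal Voronoi partition. Combined with Proposition~\ref{prop10}(iii) --- each optimal quantizer is the centroid of its own Voronoi cell, and the cells tile $J$ up to a null set --- this reduces the problem to a finite case analysis over the combinatorial types of Voronoi tessellations of the square into $n$ convex polygons.

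For part $(i)$ I would split according to whether the three optimal points are collinear. If they are collinear, the three Voronoi cells are slabs of $J$ cut off by two parallel chords perpendicular to the line through the points; taking centroids forces the line either to be a mid-line of $J$ (via the identity $E\|X-(a,b)\|^2 = V(X_1)+V(X_2)+(a-\tfrac12)^2+(b-\tfrac12)^2$ of Note~\ref{note1}, which decouples the coordinates and hands the problem to the symmetric one-dimensional three-point quantizer of the uniform law on $[0,1]$) or to be a diagonal, and one compares the two costs. If the three points are not collinear, the three perpendicular bisectors meet at a single interior point and the cells are convex polygons radiating from it; the number of corners of $J$ lying in each cell gives finitely many subcases, and in each the six scalar centroid equations pin the configuration down, after which one checks that the minimal-cost solution is fixed by a diagonal reflection. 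Parts $(ii)$ and $(iii)$ go the same way: enumerate the combinatorial types of Voronoi tilings of $J$ into four, respectively five, convex cells --- for $n=5$ allowing, as in the disc, a central cell with centroid $(\tfrac12,\tfrac12)$ surrounded by four congruent cells --- write the centroid equations for each, and compare the finitely many resulting distortion errors, with Note~\ref{note1} and the one-dimensional results again doing most of the work.

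The \emph{main obstacle} is ruling out a genuinely asymmetric critical configuration of strictly smaller cost, i.e. showing that in each case the symmetric solution is the global minimizer and not merely one critical point among several. Two routes exist. One is an exhaustive but finite comparison: bound the number of combinatorial types of Voronoi tessellations of a convex quadrilateral into $n$ convex cells, solve the centroid equations for each, and compare critical values; this is elementary but bookkeeping-heavy, and the centroid equations become high-degree systems with spurious roots that must be discarded by direct cost comparison. The cleaner route would be a symmetrization argument: given an optimal Voronoi partition, reflect it in a fixed axis of $J$ and argue that a suitably symmetrized partition has no larger cost (using that, for a fixed partition, the cost is minimized by taking cell centroids, together with the invariance of the measure), so that some optimal partition is invariant under that reflection. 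The delicate point is that the common refinement of a partition and its reflection is generally finer than $n$ cells and need not itself be a Voronoi partition, so this step needs real care --- it is precisely this gap that keeps the statement at the level of a conjecture, and the rest of the paper proceeds by assuming it, exactly as the disc section proceeded under Conjecture~\ref{conj1}.
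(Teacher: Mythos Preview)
The paper offers no proof of this statement: it is explicitly a \emph{conjecture}, motivated only by the symmetry remarks in Note~\ref{note101}, and the subsequent subsections compute the optimal sets of three-, four-, and five-means \emph{under the assumption} that the conjecture holds, exactly as you say in your final paragraph. So there is nothing in the paper to compare your proposal against; you have correctly diagnosed the situation.

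Your outline is a sensible programme for \emph{attacking} the conjecture, and you are honest about where it breaks down. The case-by-case enumeration of combinatorial types of Voronoi tessellations of $J$ into $n$ convex cells, followed by solving the centroid equations in each type, is in principle a complete (if laborious) strategy, and for $n=3,4,5$ the number of types is genuinely finite. The symmetrization route you sketch is more elegant but, as you note, the refinement of a partition with its reflection need not have $n$ cells or be a Voronoi partition, so the cost-comparison step is not automatic; this is indeed the real obstruction, and nothing in the paper addresses it. In short: your proposal goes well beyond what the paper does, the paper simply asserts the conjecture and moves on.
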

Under the above conjecture, in the following subsections, we determine the optimal sets of $n$-means for $n=3, 4,$ and $5$. It is extremely difficult and the answer is not known yet how the points in an optimal sets of $n$-means for higher values of $n$ are located.
\end{note}

\subsection{Optimal sets of 3-means}
In this subsection, we determine the optimal sets of three-means. By Conjecture~\ref{conj2}, the following two cases can arise:

\begin{figure} \begin{tikzpicture}[line cap=round,line join=round,>=triangle 45,x=3.0cm,y=3.0cm]
\clip(-0.2,-0.3) rectangle (1.2,1.3);
\draw (0.,1.)-- (0.,0.);
\draw (0.,0.)-- (1.,0.);
\draw (1.,1.)-- (1.,0.);
\draw (1.,1.)-- (0.,1.);
\draw (0.,1.)-- (0.,0.);
\draw (1.,1.)-- (0.,0.);
\draw (0.3481831396431663,1.)-- (0.508777,0.508777);
\draw (1.,0.378352)-- (0.508777,0.508777);
\begin{scriptsize}
\draw [fill=xdxdff] (0.,1.) circle (1.5pt);
\draw[color=qqqqff] (-0.04022805548433947,1.0704567143853105) node {$C$};
\draw [fill=uuuuuu] (0.,0.) circle (1.5pt);
\draw[color=qqqqff] (-0.029102180865003086,-0.05911372535694898) node {$O$};
\draw [fill=xdxdff] (1.,0.) circle (1.5pt);
\draw[color=qqqqff] (1.039177556192434,-0.040438200585347174) node {$A$};
\draw [fill=qqqqff] (1.,1.) circle (1.5pt);
\draw[color=qqqqff] (1.0454027311163012,1.0704567143853105) node {$B$};
\draw [fill=xdxdff] (0.7313255000000001,0.7313255000000001) circle (1.5pt);
\draw[color=qqqqff] (0.8088460840093448,0.7190331401264596) node {$P$};
\draw [fill=qqqqff] (0.203336,0.592112) circle (1.5pt);
\draw[color=qqqqff] (0.24858034086128983,0.651682090583256) node {$Q$};
\draw [fill=qqqqff] (0.592112,0.203336) circle (1.5pt);
\draw[color=qqqqff] (0.634541186141061,0.25949607037961805) node {$R$};
\draw [fill=xdxdff] (0.508777,0.508777) circle (1.5pt);
\draw[color=qqqqff] (0.63096496167399,0.5409534171820434) node {$M$};
\draw [fill=xdxdff] (0.3481831396431663,1.) circle (1.5pt);
\draw[color=qqqqff] (0.3917593641102372,1.0704567143853105) node {$D$};
\draw [fill=qqqqff] (1.,0.378352) circle (1.5pt);
\draw[color=qqqqff] (1.0516279060401685,0.451350618400167) node {$E$};
\draw[] (0.50096496167399,-0.160438200585347174) node { $(a)$ };
\end{scriptsize}
\end{tikzpicture}\qquad
\begin{tikzpicture}[line cap=round,line join=round,>=triangle 45,x=3.0cm,y=3.0cm]
\clip(-0.2,-0.3) rectangle (1.2,1.3);
\draw (0.,1.)-- (1.,1.);
\draw (1.,0.)-- (1.,1.);
\draw (0.,1.)-- (0.,0.);
\draw (1.,0.)-- (0.,0.);
\draw (0.5,0.)-- (0.5,1.);
\draw (6.552678678847919E-4,0.7648095641098724)-- (0.5,0.5);
\draw (0.9977664630287412,0.7547883460680548)-- (0.5,0.5);
\begin{scriptsize}
\draw [fill=xdxdff] (0.,1.) circle (1.5pt);
\draw[color=qqqqff] (-0.01938716821575037,1.0704567143853105) node {$C$};
\draw [fill=qqqqff] (1.,1.) circle (1.5pt);
\draw[color=qqqqff] (1.0328407261751031,1.0704567143853105) node {$B$};
\draw [fill=xdxdff] (1.,0.) circle (1.5pt);
\draw[color=qqqqff] (1.027851335196012,-0.03383483413099496) node {$A$};
\draw [fill=uuuuuu] (0.,0.) circle (1.5pt);
\draw[color=qqqqff] (-0.04441899527847685,-0.056887879235539066) node {$O$};
\draw [fill=xdxdff] (0.5,0.) circle (1.5pt);
\draw[color=qqqqff] (0.5217586060424028,-0.056887879235539066) node {$G$};
\draw [fill=xdxdff] (0.5,1.) circle (1.5pt);
\draw[color=qqqqff] (0.5367904331051293,1.0704567143853105) node {$H$};
\draw [fill=xdxdff] (0.5,0.803764) circle (1.5pt);
\draw[color=qqqqff] (0.5367904331051293,0.8750429625698665) node {$P$};
\draw [fill=qqqqff] (0.231591,0.317285) circle (1.5pt);
\draw[color=qqqqff] (0.17602658359969386,0.26875927103989905) node {$Q$};
\draw [fill=qqqqff] (0.768409,0.317285) circle (1.5pt);
\draw[color=qqqqff] (0.8123739292551146,0.2787804890817167) node {$R$};
\draw [fill=qqqqff] (6.552678678847919E-4,0.7648095641098724) circle (1.5pt);
\draw[color=qqqqff] (-0.05446143136211215,0.7948732182353253) node {$D$};
\draw [fill=qqqqff] (0.9977664630287412,0.7547883460680548) circle (1.5pt);
\draw[color=qqqqff] (1.0358407261751031,0.8249368723607782) node {$E$};
\draw [fill=qqqqff] (0.5,0.5) circle (1.5pt);
\draw[color=qqqqff] (0.5667692150633116,0.4641730228553431) node {$M$};
\draw[] (0.50096496167399,-0.160438200585347174) node { $(b)$ };
\end{scriptsize}
\end{tikzpicture}
\caption{$(a)$ Three points $P$, $Q$ and $R$ in Case 1 (not an optimal configuration); $(b)$ three points $P$, $Q$ and $R$ in Case 2 (optimal configuration).} \label{FigE}
\end{figure}
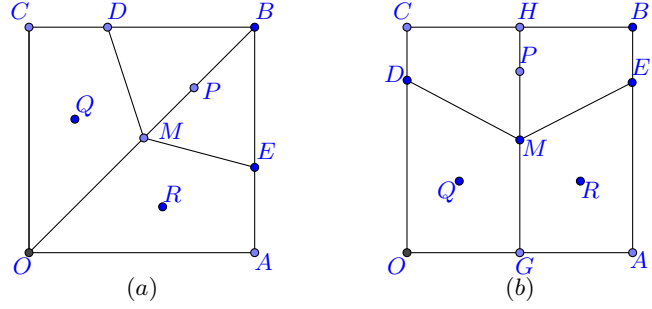

\tbf{Case 1.}  As shown in Figure~\ref{FigE}$(a)$, in this case we assume that one of the three points in an optimal set of three-means lies on the diagonal $OB$, and the other two are on either side of the diagonal $OB$.
Let the boundaries of the three Voronoi regions meet at the point $M$ on the diagonal, and cut the side $BC$ and $AB$ at the points $D$ and $E$, respectively. Let $BD=\ga$ and $BM=\gb \sqrt 2$. Then, due to symmetry, we have $BE=\ga$. Then, the position vectors of $D$, $E$ and $M$ are given by,
$\tilde d=(1-\alpha ) \tilde b+\alpha  \tilde c, \, \tilde e=\alpha \tilde a  +(1-\alpha ) \tilde b \te{ and } \tilde m=(1-\beta ) \tilde b.$
$\te{Area of the triangle } BMD=\te{Area of the triangle } BME=\frac 12 \ga \gb.$
Let $P$, $Q$, $R$ be the centroids of the quadrilaterals $MEBD$, $OMDC$, and $OAEM$. Then, we have
\begin{align*} \tilde p&=\frac{\frac 12 \ga\gb \frac 1 3   (\tilde b+\tilde d+\tilde m)+\frac 1 2 \alpha  \beta \frac 1 3  (\tilde b+\tilde e+\tilde m)}{\alpha  \beta }=\Big(\frac{1}{6} (-\alpha -2 \beta +6), \frac{1}{6} (-\alpha -2 \beta +6)\Big),\\
\tilde q &=\frac{\frac 1 2 \cdot\frac 1 3  (\tilde b+\tilde c)-\frac 1 2 \alpha  \beta  \frac 1 3 (\tilde b+\tilde d+\tilde m)}{\frac{1-\alpha  \beta }{2}}=\Big(-\frac{\alpha ^2 \beta +\alpha  (\beta -3) \beta +1}{3 \alpha  \beta -3},-\frac{\alpha  (\beta -3) \beta +2}{3 \alpha  \beta -3}\Big),\\
\tilde r &=\frac{\frac 1 2 \cdot\frac 1 3  (\tilde b+\tilde a)-\frac 1 2 \alpha  \beta  \frac 1 3 (\tilde b+\tilde e+\tilde m)}{\frac{1-\alpha  \beta }{2}}=\Big (-\frac{\alpha  (\beta -3) \beta +2}{3 \alpha  \beta -3},-\frac{\alpha ^2 \beta +\alpha  (\beta -3) \beta +1}{3 \alpha  \beta -3}\Big).
\end{align*}
If $\tilde p$, $\tilde q$ and $\tilde r$ form an optimal set of three-means, we must have \[\rho(\tilde p, \tilde d)=\rho(\tilde q, \tilde d) \te{ and } \rho(\tilde  p,  \tilde m)=\rho(\tilde q,  \tilde m).\]
Solving the above two equations, we have $\{\alpha = 0.621648,\beta =0.491223\}$, and so
\[\tilde p= (0.732651, 0.732651), \, \tilde q=(0.203336, 0.592112) \te{ and } \tilde r=(0.592112,  0.203336).\]
Moreover, we have $\tilde m=(0.508777,0.508777)$, $\tilde d=(0.378352,1)$, and so the equation of the line $MD$ is $x_2=0.508777-3.76633 (x_1-0.508777)$. Now, if $V_3(\te{Case 1})$ is the distortion error in this case, we have
\begin{align*}
&V_3(\te{Case 1})=2\times (\te{Distortion error in the triangle } OBC)\\
&=2 \times \Big(\int _0^{0.378352}\int _{x_1}^1\left(\left(x_2-0.592112\right){}^2+\left(x_1-0.203336\right){}^2\right)dx_2dx_1\\
&+\int _{0.378352}^{0.508777}\int _{x_1}^{0.508777\, -3.76633 \left(x_1-0.508777\right)}\left(\left(x_2-0.592112\right){}^2+\left(x_1-0.203336\right){}^2\right)dx_2dx_1\\
&+\int _{0.378352}^{0.508777}\int _{0.508777\, -3.76633 \left(x_1-0.508777\right)}^1\left(\left(x_1-0.732651\right){}^2+\left(x_2-0.732651\right){}^2\right)dx_2dx_1\\
& +\int _{0.508777}^1\int _{x_1}^1\left(\left(x_1-0.732651\right){}^2+\left(x_2-0.732651\right){}^2\right)dx_2dx_1\Big)\\
&=2 \times 0.0332909=0.0665818.
\end{align*}
 \tbf{Case 2.} As shown in Figure~\ref{FigE}$(b)$, in this case we assume that one of the three points in an optimal set of three-means lies on the line $GH$ which bisects the opposite sides $OA$ and $BC$ at the points $G$ and $H$, and the other two points are equidistant from $GH$.
Let the boundaries of the three Voronoi regions meet at the point $M$ on the line $GH$, and cut the sides $OC$ and $AB$ at the points $D$ and $E$, respectively. Let $OD=\ga$ and $GM=\gb$. Then, due to symmetry, we have $AE=\ga$. Then,
Area of the triangle $OMD=$ Area of the triangle  AME $=\frac 1 4 \ga$,  Area of the triangle $OGM=$ Area of the triangle $AGM=\frac 1 4 \gb$,  Area of the triangle $ MDH=$ Area of the triangle $MEH=\frac 1 4 (1-\gb)$,  and Area of the triangle $ CDH=$ Area of the triangle $ BEH=\frac 1 4 (1-\ga)$. Moreover, the position vectors of the points $D, E, G, H$, and $M$ are, respectively, given by $\tilde d=\ga \tilde c$, $\tilde e=\ga \tilde b+(1-\ga) \tilde a$, $\tilde g=(\frac 12, 0)$, $\tilde h=(\frac 12, 1)$, and $\tilde m=\gb \tilde h+(1-\gb) \tilde g$.
Thus,
\begin{align*}
&\tilde p=\frac{\frac{(1-\alpha ) (b+e+h)}{3\cdot 4}+\frac{(1-\alpha ) (c+d+h)}{3\cdot 4}+\frac{(1-\beta ) (d+h+m)}{3\cdot 4}+\frac{(1-\beta ) (e+h+m)}{3\cdot 4}}{\frac{1}{2} (-\alpha -\beta +2)}=\Big(\frac{1}{2},\frac{\alpha ^2+\alpha  \beta +\beta ^2-3}{3 (\alpha +\beta -2)}\Big),\\
\tilde q&=\frac{\frac{\alpha  (d+m)}{3\ 4}+\frac{\beta  (g+m)}{3\ 4}}{\frac{\alpha +\beta }{4}}=\Big(\frac{\alpha +2 \beta }{6 (\alpha +\beta )},\frac{\alpha ^2+\alpha  \beta +\beta ^2}{3 (\alpha +\beta )}\Big), \\
\tilde r&=\frac{\frac{\alpha  (a+e+m)}{3\ 4}+\frac{\beta  (a+g+m)}{3\ 4}}{\frac{\alpha +\beta }{4}}=\Big(\frac{5 \alpha +4 \beta }{6 (\alpha +\beta )},\frac{\alpha ^2+\alpha  \beta +\beta ^2}{3 (\alpha +\beta )}\Big).
\end{align*}
If $\tilde p$, $\tilde q$ and $\tilde r$ form an optimal set of 3-means, we must have
\[\rho(\tilde p,  \tilde d)=\rho(\tilde  q,  \tilde  d) \te{ and }
\rho(\tilde  p,  \tilde  m)=\rho(\tilde   q,  \tilde m).\]
Solving the above two equations, we have
$\{\alpha = 0.762348,\beta= 0.486479\}$, and so
\[\tilde p=(0.5, 0.803764) , \, \tilde q=(0.231591, 0.317285), \te{ and } \tilde r=(0.768409, 0.317285).\]
Moreover, we have $\tilde m=(\frac{1}{2}, 0.486479)$, $\tilde d=(0, 0.762348)$, and so the equation of the line $MD$ is $x_2=0.486479\, -0.551737 (x_1-0.5)$. Now, if $V_3(\te{Case 2})$ is the distortion error in this case, we have
\begin{align*}
&V_3(\te{Case 2})=2\times (\te{Distortion error in the rectangle } OGHC)\\
&=2 \times \Big(\int _0^{\frac{1}{2}}\int _{0.486479\, -0.551737 (x_1-0.5)}^1\Big((x_2-0.803764){}^2+(x_1-0.5){}^2\Big)dx_2dx_1\\
&+\int _0^{\frac{1}{2}}\int _0^{0.486479\, -0.551737 (x_1-0.5)}\Big((x_2-0.317285){}^2+(x_1-0.231591){}^2\Big)dx_2dx_1\Big)\\
&=2 \times 0.0330899=0.0661797.
\end{align*}
Since $V_3(\te{Case 2})< V_3(\te{Case 1})$, we see that the points in Case 2 give the error minimum. Hence the points $P$, $Q$ and $R$ in Case 2 form an optimal set of three-means. Notice that due to rotational symmetry there are three more optimal sets of three-means. Thus, we deduce the following theorem.
\begin{theorem}
For the uniform distribution on the square there are four optimal sets of three-means with quantization error $0.0661797$. One of the four optimal sets of three-means is the set
$\set{(0.5, 0.803764), \, (0.231591, 0.317285), \, (0.768409, 0.317285)}$.
\end{theorem}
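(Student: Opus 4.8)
The plan is to work under Conjecture~\ref{conj2}, which reduces the determination of an optimal set of three-means to exactly two geometric configurations, and then to compare the two resulting distortion errors directly; the four-fold multiplicity will come from the rotational symmetries of the square.

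First I would record the tools common to both cases. By Proposition~\ref{prop10} every point of an optimal set of three-means is the centroid (conditional expectation over the uniform measure) of its own Voronoi region, and adjacent Voronoi regions are separated by the perpendicular bisector of the two generating points; by Remark~\ref{rem1} the required centroids can be computed by taking moments about the origin over the polygonal pieces into which the square decomposes. In Case~1 the axis of symmetry is the diagonal $OB$: one point $P$ lies on $OB$, the points $Q,R$ are mirror images across $OB$, the three Voronoi cells meet at a point $M\in OB$, and the separating segments cut $BC$ and $AB$ at $D,E$ with $BD=BE=\alpha$ and $BM=\beta\sqrt 2$. Writing $\tilde p,\tilde q,\tilde r$ as the centroids of the quadrilaterals $MEBD$, $OMDC$, $OAEM$ expresses them as explicit rational functions of $(\alpha,\beta)$; the two independent Voronoi-boundary conditions $\rho(\tilde p,\tilde d)=\rho(\tilde q,\tilde d)$ and $\rho(\tilde p,\tilde m)=\rho(\tilde q,\tilde m)$ then pin down $\{\alpha=0.621648,\ \beta=0.491223\}$, and integrating the squared distance over the three cells (each split into triangular and quadrilateral pieces, because the boundary lines $MD$, $ME$ are slanted) gives $V_3(\te{Case }1)=0.0665818$. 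Case~2 is treated in the same way with the axis taken to be the midline $GH$ given by $x_1=\frac{1}{2}$: with $OD=AE=\alpha$ and $GM=\beta$ the analogous centroid formulas and the two boundary equations give $\{\alpha=0.762348,\ \beta=0.486479\}$ and hence $V_3(\te{Case }2)=0.0661797$.

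With both values computed, $V_3(\te{Case }2)<V_3(\te{Case }1)$ is immediate, so under Conjecture~\ref{conj2} the configuration of Case~2 is optimal. Applying the four rotational symmetries of the square to that configuration produces four distinct optimal triples, one of which is $\set{(0.5,0.803764),\,(0.231591,0.317285),\,(0.768409,0.317285)}$, which is the assertion of the theorem.

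I expect the main obstacle to be bookkeeping rather than ideas: in both cases the Voronoi cells are bounded in part by slanted segments, so each distortion integral must be broken into several sub-rectangles and sub-triangles with carefully chosen limits, and one must verify that the system arising from the two $\rho$-equations has the quoted $(\alpha,\beta)$ as its only admissible root (the spurious roots corresponding to degenerate or non-Voronoi partitions). Conceptually, all of the reduction is carried by Conjecture~\ref{conj2}: the theorem as stated is conditional on it, and a genuinely unconditional proof would further have to exclude every configuration not symmetric about a line of symmetry of the square, which is not attempted here.
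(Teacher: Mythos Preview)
Your proposal is correct and follows essentially the same approach as the paper: you work under Conjecture~\ref{conj2}, split into the diagonal-axis case and the midline-axis case with the same parameters $(\alpha,\beta)$, impose the same pair of equidistance conditions $\rho(\tilde p,\tilde d)=\rho(\tilde q,\tilde d)$ and $\rho(\tilde p,\tilde m)=\rho(\tilde q,\tilde m)$ to solve for $(\alpha,\beta)$, compute the two distortion errors, compare, and invoke the rotational symmetries of the square to count four optimal sets. Your explicit acknowledgment that the argument is conditional on Conjecture~\ref{conj2} matches the paper's framing.
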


\subsection{Optimal set of 4-means}

In this subsection, we calculate the optimal set of four-means. By Conjecture~\ref{conj2}, the following two cases can arise:

 \tbf{Case 1.}   As shown in Figure~\ref{FigF}$(a)$, in this case we assume that two of the four points, say $P$ and $Q$ lie on the diagonal $OB$, and the other two, say $R$ and $S$ are equidistant from the diagonal $OB$, in fact due to symmetry $R$ and $S$ lie on the other diagonal $AC$. Thus, the boundaries of the Voronoi regions of the four optimal quantizers are the lines $x_1=\frac 12$ and $x_2=\frac 12$, i.e., the Voronoi regions of the optimal quantizers in this case divide the square $J$ into four congruent squares. Due to symmetry, we have
\[\tilde p=(\frac 1 4, \frac 14), \, \tilde q=(\frac 34, \frac 34), \, \tilde r=(\frac 34, \frac 14),  \te{ and } \tilde s=(\frac 14, \frac 34), \]
and the corresponding distortion error is given by
\begin{align*} &4\times (\te{Distortion error in the Voronoi region of } P)\\
&=4 \times \Big( \int _0^{\frac{1}{2}}\int _0^{\frac{1}{2}}\Big((x_1-\frac{1}{4}){}^2+(x_2-\frac{1}{4}){}^2\Big)dx_2dx_1\Big)=4 \times \frac 1{96}=0.0416667.
\end{align*}

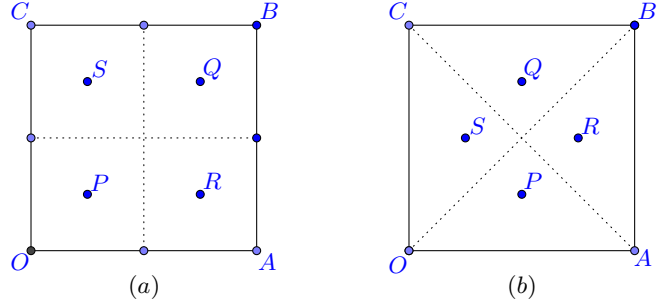
\begin{figure}
\begin{tikzpicture}[line cap=round,line join=round,>=triangle 45,x=3.0cm,y=3.0cm]
\clip(-0.2,-0.3) rectangle (1.2,1.3);
\draw (0.,1.)-- (0.,0.);
\draw (0.,0.)-- (1.,0.);
\draw (1.,0.)-- (1.,1.);
\draw (1.,1.)-- (0.,1.);
\draw [dotted] (0.,0.5)-- (1.,0.5);
\draw [dotted] (0.5,1.)-- (0.5,0.);
\begin{scriptsize}
\draw [fill=xdxdff] (0.,1.) circle (1.5pt);
\draw[color=qqqqff] (-0.04862534625887355,1.0626837060992063) node {$C$};
\draw [fill=uuuuuu] (0.,0.) circle (1.5pt);
\draw[color=qqqqff] (-0.04862534625887355,-0.051044215130043346) node {$O$};
\draw [fill=xdxdff] (1.,0.) circle (1.5pt);
\draw[color=qqqqff] (1.0467224433855023,-0.048720241447018184) node {$A$};
\draw [fill=qqqqff] (1.,1.) circle (1.5pt);
\draw[color=qqqqff] (1.0567224433855023,1.0626597324161811) node {$B$};
\draw [fill=xdxdff] (0.5,1.) circle (1.5pt);
\draw [fill=xdxdff] (0.5,0.) circle (1.5pt);
\draw [fill=xdxdff] (0.,0.5) circle (1.5pt);
\draw [fill=qqqqff] (1.,0.5) circle (1.5pt);
\draw [fill=qqqqff] (0.75,0.75) circle (1.5pt);
\draw[color=qqqqff] (0.8034135749253328,0.8070508639560116) node {$Q$};
\draw [fill=qqqqff] (0.25,0.75) circle (1.5pt);
\draw[color=qqqqff] (0.30447186432196877,0.8070508639560116) node {$S$};
\draw [fill=qqqqff] (0.25,0.25) circle (1.5pt);
\draw[color=qqqqff] (0.30447186432196877,0.3010915335264776) node {$P$};
\draw [fill=qqqqff] (0.75,0.25) circle (1.5pt);
\draw[color=qqqqff] (0.8034135749253328,0.30810915335264776) node {$R$};
\draw[] (0.50096496167399,-0.160438200585347174) node { $(a)$ };
\end{scriptsize}
\end{tikzpicture}\qquad
\begin{tikzpicture}[line cap=round,line join=round,>=triangle 45,x=3.0cm,y=3.0cm]
\clip(-0.2,-0.3) rectangle (1.2,1.3);
\draw (0.,1.)-- (0.,0.);
\draw (0.,0.)-- (1.,0.);
\draw (1.,0.)-- (1.,1.);
\draw (1.,1.)-- (0.,1.);
\draw [dotted] (0.,0.)-- (1.,1.);
\draw [dotted] (0.,1.)-- (1.,0.);
\begin{scriptsize}
\draw [fill=xdxdff] (0.,1.) circle (1.5pt);
\draw[color=qqqqff] (-0.04862534625887355,1.0626837060992063) node {$C$};
\draw [fill=uuuuuu] (0.,0.) circle (1.5pt);
\draw[color=qqqqff] (-0.04862534625887355,-0.062644215130043346) node {$O$};
\draw [fill=xdxdff] (1.,0.) circle (1.5pt);
\draw[color=qqqqff] (1.0367224433855023,-0.032620241447018184) node {$A$};
\draw [fill=qqqqff] (1.,1.) circle (1.5pt);
\draw[color=qqqqff] (1.0567224433855023,1.0626597324161811) node {$B$};
\draw [fill=xdxdff] (0.,0.) circle (1.5pt);
\draw [fill=qqqqff] (1.,1.) circle (1.5pt);
\draw [fill=qqqqff] (0.5,0.75) circle (1.5pt);
\draw[color=qqqqff] (0.5501047064651634,0.8070508639560116) node {$Q$};
\draw [fill=qqqqff] (0.25,0.5) circle (1.5pt);
\draw[color=qqqqff] (0.30447186432196877,0.5537419954958423) node {$S$};
\draw [fill=qqqqff] (0.5,0.25) circle (1.5pt);
\draw[color=qqqqff] (0.5501047064651634,0.30810915335264776) node {$P$};
\draw [fill=qqqqff] (0.75,0.5) circle (1.5pt);
\draw[color=qqqqff] (0.8034135749253328,0.5537419954958423) node {$R$};
\draw[] (0.50096496167399,-0.160438200585347174) node { $(b)$ };
\end{scriptsize}
\end{tikzpicture}
\caption{$(a)$ Four points $P$, $Q$, $R$ and $S$ in Case 1 (optimal configuration); $(b)$ four points $P$, $Q$, $R$ and $S$ in Case 2 (not an optimal configuration).} \label{FigF}
\end{figure}

\tbf{Case 2.}   As shown in Figure~\ref{FigF}$(b)$, in this case we assume that two of the four optimal quantizers, say $P$ and $Q$, lie on the line of
symmetry $x_1=\frac 12$ of the square and the other two, say $R$ and $S$, lie on the line of symmetry $x_2=\frac 12$ of the square. Thus, the boundaries of the Voronoi regions in this case are the two diagonals of the square, in other words, the Voronoi regions in this case partition the square $J$ into four congruent triangles. Due to symmetry, we have
\[\tilde p=(\frac 1 2, \frac 14), \, \tilde q=(\frac 12, \frac 34), \, \tilde r=(\frac 34, \frac 12),  \te{ and } \tilde s=(\frac 14, \frac 12),\]
and the corresponding distortion error is given by
\begin{align*} &4\times (\te{Distortion error in the Voronoi region of } P)\\
&=4 \times \Big( \int _0^{\frac{1}{2}}\int _{x_2}^{1-x_2}\Big((x_1-\frac{1}{2}){}^2+(x_2-\frac{1}{4}){}^2\Big)dx_1dx_2\Big)
=4 \times \frac 1{64}=0.0625.
\end{align*}

Notice that the error in Case 1 is less than the error in Case 2. Thus, we deduce the following theorem.
\begin{theorem}
For the uniform distribution on the unit square with vertices $(0, 0)$, $(1, 0)$, $(1, 1)$ and $(0, 1)$, the set $\set{(\frac 1 4, \frac 14), \, (\frac 34, \frac 34), \, (\frac 34, \frac 14),  (\frac 14, \frac 34)}$ forms a unique optimal set of four-means with quantization error $0.0416667$.
\end{theorem}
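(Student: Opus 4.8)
The plan is to exploit the dichotomy provided by Conjecture~\ref{conj2}$(ii)$. The square $J$ has exactly four axes of symmetry, of two congruence types --- the two diagonals and the two perpendicular bisectors of opposite sides --- so, up to a symmetry of $J$, an optimal set of four-means falls into one of two configurations: two of the quantizers lie on a diagonal (Case~1), or two lie on a side-bisector (Case~2). I would identify the best admissible configuration in each case, compute its distortion error, and keep the smaller; a short remark on which configuration attains it then yields both optimality and uniqueness.

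For Case~1, take the diagonal to be $OB$. Reflection of $J$ across $OB$ preserves the uniform law, hence permutes an optimal set, so the two off-diagonal quantizers are mirror images across $OB$. Reflection across the complementary diagonal $AC$ fixes the line $OB$ setwise and therefore also permutes an optimal set of this type; together these two reflections force the four points to be the vertices of an axis-parallel square centred at $(\tfrac12,\tfrac12)$. By Remark~\ref{rem1} and Proposition~\ref{prop10}$(iii)$ each optimal quantizer is the centroid of its Voronoi region and the Voronoi regions tile $J$, which pins the points down as the centroids $(\tfrac14,\tfrac14)$, $(\tfrac34,\tfrac14)$, $(\tfrac34,\tfrac34)$, $(\tfrac14,\tfrac34)$ of the four congruent quarter-squares $[0,\tfrac12]^2$ and its images, with Voronoi boundaries $x_1=\tfrac12$ and $x_2=\tfrac12$. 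Hence
\[
V_4(\te{Case 1})=4\int_0^{1/2}\int_0^{1/2}\Big((x_1-\tfrac14)^2+(x_2-\tfrac14)^2\Big)\,dx_2\,dx_1=4\cdot\tfrac1{96}=\tfrac1{24}.
\]

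For Case~2, take the side-bisector to be $x_1=\tfrac12$, so the remaining two quantizers lie symmetrically about it, i.e.\ on $x_2=\tfrac12$. The analogous symmetry argument (reflections across $x_1=\tfrac12$ and $x_2=\tfrac12$) forces the four points to be the vertices of a square centred at $(\tfrac12,\tfrac12)$ but rotated by $45^{\circ}$; for every such configuration the four perpendicular bisectors of neighbouring points are precisely the two diagonals of $J$, so the Voronoi regions are the four congruent triangles cut off by those diagonals, and self-consistency determines the configuration shown in Figure~\ref{FigF}$(b)$. Its distortion error is
\[
V_4(\te{Case 2})=4\int_0^{1/2}\int_{x_2}^{1-x_2}\Big((x_1-\tfrac12)^2+(x_2-\tfrac14)^2\Big)\,dx_1\,dx_2=4\cdot\tfrac1{64}=\tfrac1{16}.
\]
Since $\tfrac1{24}<\tfrac1{16}$, the Case~1 configuration is the optimal one. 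For uniqueness I would observe that $\{(\tfrac14,\tfrac14),(\tfrac34,\tfrac14),(\tfrac34,\tfrac34),(\tfrac14,\tfrac34)\}$ is carried onto itself by every symmetry of $J$, so no rotation or reflection produces a new optimal set --- in contrast to the two- and three-means cases, whose optimal configurations are less symmetric.

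The step I expect to be the main obstacle is the reduction, inside each case, to a single admissible configuration: one must verify that the centroid fixed-point equations for four points constrained by a prescribed axis of symmetry admit only the fully symmetric solution, ruling out any lopsided critical configuration, and that the Voronoi partition is genuinely the claimed one. Granting Conjecture~\ref{conj2}$(ii)$, this reduces to a finite algebraic check, after which the numerical comparison above completes the proof.
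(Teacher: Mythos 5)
Your proposal is correct and follows essentially the same route as the paper: the same two cases from Conjecture~\ref{conj2}$(ii)$, the same symmetric configurations (quarter-square centroids versus the four triangles cut off by the diagonals), and the same two integrals giving $4\cdot\frac1{96}=\frac1{24}$ and $4\cdot\frac1{64}=\frac1{16}$, with Case~1 winning. The additional symmetry discussion you supply to pin down each configuration and to address uniqueness is a reasonable elaboration of what the paper asserts with ``due to symmetry,'' not a different argument.
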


\subsection{Optimal set of 5-means}

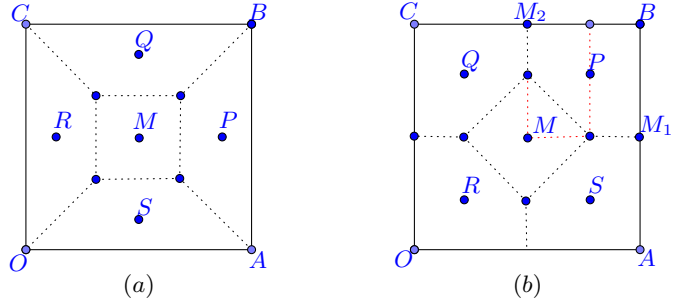
\begin{figure}
\begin{tikzpicture}[line cap=round,line join=round,>=triangle 45,x=3.0cm,y=3.0cm]
\clip(-0.2,-0.3) rectangle (1.2,1.3);
\draw (0.,1.)-- (0.,0.);
\draw (0.,0.)-- (1.,0.);
\draw (1.,0.)-- (1.,1.);
\draw (1.,1.)-- (0.,1.);
\draw [dotted] (0.3104558126320994,0.6830340757908577)-- (0.6856550950737798,0.6830340757908577);
\draw [dotted] (0.6856550950737798,0.6830340757908577)-- (0.6816636133456768,0.31581775680538365);
\draw [dotted] (0.6816636133456768,0.31581775680538365)-- (0.3104558126320994,0.31182627507728067);
\draw [dotted] (0.3104558126320994,0.6830340757908577)-- (0.3104558126320994,0.31182627507728067);
\draw [dotted] (1.,1.)-- (0.6856550950737798,0.6830340757908577);
\draw [dotted] (1.,0.)-- (0.6816636133456768,0.31581775680538365);
\draw [dotted] (0.,1.)-- (0.3104558126320994,0.6830340757908577);
\draw [dotted] (0.3104558126320994,0.31182627507728067)-- (0.,0.);
\begin{scriptsize}
\draw [fill=xdxdff] (0.,1.) circle (1.5pt);
\draw[color=qqqqff] (-0.024828652528551074,1.0502503947763318) node {$C$};
\draw [fill=uuuuuu] (0.,0.) circle (1.5pt);
\draw[color=qqqqff] (-0.03281161598475704,-0.057449671811472537) node {$O$};
\draw [fill=xdxdff] (1.,0.) circle (1.5pt);
\draw[color=qqqqff] (1.0289225236906363,-0.043458190083369558) node {$A$};
\draw [fill=qqqqff] (1.,1.) circle (1.5pt);
\draw[color=qqqqff] (1.0289225236906363,1.0564418765044348) node {$B$};
\draw [fill=xdxdff] (0.,0.) circle (1.5pt);
\draw [fill=qqqqff] (1.,1.) circle (1.5pt);
\draw [fill=qqqqff] (0.5,0.866025) circle (1.5pt);
\draw[color=qqqqff] (0.5259958259496604,0.9225229794770364) node {$Q$};
\draw [fill=qqqqff] (0.5,0.133975) circle (1.5pt);
\draw[color=qqqqff] (0.5259958259496604,0.1880903415060883) node {$S$};
\draw [fill=qqqqff] (0.87,0.5) circle (1.5pt);
\draw[color=qqqqff] (0.8972036266632378,0.5753066604915624) node {$P$};
\draw [fill=qqqqff] (0.133975,0.5) circle (1.5pt);
\draw[color=qqqqff] (0.1627709886922891,0.5753066604915624) node {$R$};
\draw [fill=qqqqff] (0.6856550950737798,0.6830340757908577) circle (1.5pt);
\draw [fill=qqqqff] (0.6816636133456768,0.31581775680538365) circle (1.5pt);
\draw [fill=qqqqff] (0.3104558126320994,0.31182627507728067) circle (1.5pt);
\draw [fill=qqqqff] (0.3104558126320994,0.6830340757908577) circle (1.5pt);
\draw [fill=qqqqff] (0.5020469355810426,0.4954344345700177) circle (1.5pt);
\draw[color=qqqqff] (0.5299873076777634,0.5713151787634594) node {$M$};
\draw[] (0.50096496167399,-0.160438200585347174) node { $(a)$ };
\end{scriptsize}
\end{tikzpicture}
\qquad
\begin{tikzpicture}[line cap=round,line join=round,>=triangle 45,x=3.0cm,y=3.0cm]
\clip(-0.2,-0.3) rectangle (1.2,1.3);
\draw (0.,1.)-- (0.,0.);
\draw (0.,0.)-- (1.,0.);
\draw (1.,0.)-- (1.,1.);
\draw (1.,1.)-- (0.,1.);
\draw [dotted] (0.7774591748201484,0.5034173980262237)-- (0.5020469355810426,0.7748381555372262);
\draw [dotted] (0.21865173288573084,0.49942591629812066)-- (0.5020469355810426,0.7748381555372262);
\draw [dotted] (0.21865173288573084,0.49942591629812066)-- (0.4940639721248366,0.21603071360280918);
\draw [dotted] (0.4940639721248366,0.21603071360280918)-- (0.7774591748201484,0.5034173980262237);
\draw [dotted] (0.5,1.)-- (0.5020469355810426,0.7748381555372262);
\draw [dotted] (0.4940639721248366,0.21603071360280918)-- (0.4980554538529396,-0.0035007814428546654);
\draw [dotted] (0.21865173288573084,0.49942591629812066)-- (-8.797621599331807E-4,0.5034173980262237);
\draw [dotted] (0.7774591748201484,0.5034173980262237)-- (0.9969906698658124,0.49942591629812066);
\draw [dotted,color=ffqqqq] (0.7774591748201484,0.5034173980262237)-- (0.7774591748201488,1.);
\draw [dotted,color=ffqqqq] (0.5020469355810426,0.7748381555372262)-- (0.5020469355810426,0.4954344345700177);
\draw [dotted,color=ffqqqq] (0.7774591748201484,0.5034173980262237)-- (0.5020469355810426,0.4954344345700177);
\begin{scriptsize}
\draw [fill=xdxdff] (0.,1.) circle (1.5pt);
\draw[color=qqqqff] (-0.02482865252855103,1.0582333582325374) node {$C$};
\draw [fill=uuuuuu] (0.,0.) circle (1.5pt);
\draw[color=qqqqff] (-0.046811615984757,-0.042449671811472995) node {$O$};
\draw [fill=xdxdff] (1.,0.) circle (1.5pt);
\draw[color=qqqqff] (1.0289225236906367,-0.037458190083370016) node {$A$};
\draw [fill=qqqqff] (1.,1.) circle (1.5pt);
\draw[color=qqqqff] (1.0289225236906367,1.0564418765044343) node {$B$};
\draw [fill=xdxdff] (0.,0.) circle (1.5pt);
\draw [fill=qqqqff] (1.,1.) circle (1.5pt);
\draw [fill=qqqqff] (0.221063,0.778937) circle (1.5pt);
\draw[color=qqqqff] (0.25058358671055486,0.8347103814587704) node {$Q$};
\draw [fill=qqqqff] (0.778937,0.221063) circle (1.5pt);
\draw[color=qqqqff] (0.8053995469168697,0.2859029395243534) node {$S$};
\draw [fill=qqqqff] (0.778937,0.778937) circle (1.5pt);
\draw[color=qqqqff] (0.8053995469168697,0.8347103814587704) node {$P$};
\draw [fill=qqqqff] (0.221063,0.221063) circle (1.5pt);
\draw[color=qqqqff] (0.25058358671055486,0.2859029395243534) node {$R$};
\draw [fill=qqqqff] (0.7774591748201484,0.5034173980262237) circle (1.5pt);
\draw [fill=qqqqff] (0.4940639721248366,0.21603071360280918) circle (1.5pt);
\draw [fill=qqqqff] (0.21865173288573084,0.49942591629812066) circle (1.5pt);
\draw [fill=qqqqff] (0.5020469355810426,0.4954344345700177) circle (1.5pt);
\draw[color=qqqqff] (0.5799873076777638,0.5513151787634589) node {$M$};
\draw [fill=qqqqff] (0.9969906698658124,0.49942591629812066) circle (1.5pt);
\draw[color=qqqqff] (1.0729140054187398,0.5513811056758709) node {$M_1$};
\draw [fill=qqqqff] (0.5,1.) circle (1.5pt);
\draw [fill=qqqqff] (-8.797621599331807E-4,0.5034173980262237) circle (1.5pt);
\draw [fill=qqqqff] (0.5020469355810426,0.7748381555372262) circle (1.5pt);
\draw [fill=xdxdff] (0.7774591748201488,1.) circle (1.5pt);
\draw[color=qqqqff] (0.5140213807653519,1.054273730329258) node {$M_2$};
\draw[] (0.50096496167399,-0.160438200585347174) node { $(b)$ };
\end{scriptsize}
\end{tikzpicture}
\caption{$(a)$ Five points $M$, $P$, $Q$, $R$ and $S$ in Case 1 (not an optimal configuration); $(b)$ five points $M$, $P$, $Q$, $R$ and $S$ in Case 2 (optimal configuration).} \label{FigG}
\end{figure}

In this subsection, we calculate the optimal set of five-means. By Conjecture~\ref{conj2}, the following two cases can arise:

 \tbf{Case 1.}  As shown in Figure~\ref{FigG}$(a)$, in this case we assume that one point in an optimal set of five-means is at the center $M (\frac 12, \frac 12)$ of the
square, and the other four lie on the lines $x_1=\frac 12$ and $x_2=\frac 12$, and are equidistant from the center of the square.
Then, for some positive scalar $\ga$, we can consider the optimal set of 5-means
as $\set{M (\frac 12, \frac 12), \, P (\frac 1 2 +\ga, \frac 12), \, Q (\frac 12, \frac 12 +\ga), \, R (\frac 1 2 -\ga, \frac 12), \, S(\frac 12, \frac 12 -\ga)}$. Thus, the distortion error in the triangle $MAB$ due to the points $M$ and $P$ is given by
 \begin{align*}
&\int _{\frac{1}{2}}^{\frac{\alpha +1}{2}}\int _{1-x_1}^{x_1}((x_1-\frac{1}{2}){}^2+(x_2-\frac{1}{2}){}^2)dx_2dx_1+\int _{\frac{\alpha +1}{2}}^1\int _{1-x_1}^{x_1}((x_1-(\alpha +\frac{1}{2})){}^2+(x_2-\frac{1}{2}){}^2)dx_2dx_1\\
&=\frac{1}{24} (-2 \alpha ^4+6 \alpha ^2-4 \alpha +1),
 \end{align*}
which is minimum when $\ga=0.366025$, and the minimum value is $0.0126603$.
Thus, if $V_5(\te{Case }1)$ is the distortion error in this case, we have $V_5(\te{Case }1)=4\times 0.0126603=0.0506413.$

 \tbf{Case 2.}   As shown in Figure~\ref{FigG}$(b)$, in this case we assume that one point lies at the center of the square, and the other four lie on the two diagonals of the square and are equidistant from $(\frac 12, \frac 12)$. Then, for some positive scalar $\ga$, we can consider the optimal set of 5-means as
$\set{M(\frac 12, \frac 12), \, P(\ga, \ga), \, Q(1-\ga, \ga), \, R(1-\ga, 1-\ga), \, S(\ga, 1-\ga)}$. The boundary of the Voronoi regions of the points $M$ and $P$ is given by the equation $x_2=-x_1+\frac{1+2\ga}{2}$. Thus, the distortion error in the quadrilateral $MM_1BM_2$ due to the points $M$ and $P$ is given by
 \begin{align*}
&\int _{\frac{1}{2}}^{\alpha }\int _{\frac{1}{2}}^{\frac{1}{2} (2 \alpha +1)-x_1}((x_1-\frac{1}{2}){}^2+(x_2-\frac{1}{2}){}^2)dx_2dx_1+\int _{\frac{1}{2}}^{\alpha }\int _{\frac{1}{2} (2 \alpha +1)-x_1}^1((x_1-\alpha ){}^2+(x_2-\alpha ){}^2)dx_2dx_1\\
&+\int _{\alpha }^1\int _{\frac{1}{2}}^1((x_1-\alpha ){}^2+(x_2-\alpha ){}^2)dx_2dx_1\\
&=-\frac{\alpha ^4}{2}+\frac{5 \alpha ^3}{3}-2 \alpha ^2+\frac{1}{24} \left(-16 \alpha ^3+42 \alpha ^2-37 \alpha +11\right)+\frac{25 \alpha }{24}+\frac{1}{96} (1-2 \alpha )^4-\frac{19}{96},
 \end{align*}
which is minimum when $\ga=0.778937$, and the minimum value is $0.00881742$. Thus, if $V_5(\te{Case }2)$ is the distortion error in this case, we have $V_5(\te{Case }2)=4\times 0.00881742=0.0352697.$

 Since $V_5(\te{Case }2)< V_5(\te{Case }1)$, the points in Case 2 give the optimal set five-means. Thus, we deduce the following theorem.
\begin{theorem}
For the uniform distribution on the unit square with vertices $(0, 0)$, $(1, 0)$, $(1, 1)$ and $(0, 1)$, the set $\set{(0.5, 0.5) , \, (0.778937, 0.778937), \, (0.221063, 0.778937), \, (0.221063, 0.221063), \, \\ (0.778937, 0.221063)}$ forms a unique optimal set of five-means with quantization error $0.0352697$.
\end{theorem}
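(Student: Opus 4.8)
The plan is to combine Conjecture~\ref{conj2}(iii) with the two one-parameter minimizations already carried out above. By that part of the conjecture, every optimal set of five-means consists of the center $M=(\tfrac{1}{2},\tfrac{1}{2})$ together with four further quantizers that are equidistant from $M$ and lie either on $\{x_1=\tfrac{1}{2}\}\cup\{x_2=\tfrac{1}{2}\}$ (Case 1) or on the two diagonals of $J$ (Case 2); the equidistance requirement places two of the four on each of the two lines, symmetrically about $M$, so each family is indexed by a single positive scalar $\alpha$. Every such configuration is invariant under the quarter-turn of $J$ about $M$, which fixes $M$ and cyclically permutes the outer four quantizers; hence the Voronoi diagram has one quarter-turn-symmetric central cell and four mutually congruent outer cells, and the five-means cost equals $4$ times the cost contributed by one outer quantizer together with the corresponding quarter of $M$'s cell. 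In Case 1 that combined region is the triangle $MAB$ with $A=(1,0)$, $B=(1,1)$, split by the bisector $x_1=\tfrac{1}{2}+\tfrac{\alpha}{2}$, and minimizing the resulting quartic $\tfrac{1}{24}(-2\alpha^4+6\alpha^2-4\alpha+1)$ over $(0,\tfrac{1}{2}]$ gives $\alpha=\tfrac{\sqrt{3}-1}{2}$ and $V_5(\text{Case }1)=0.0506413$; in Case 2 that combined region is the quarter square $[\tfrac{1}{2},1]^2$, split by the bisector $x_2=-x_1+\tfrac{1+2\alpha}{2}$, and minimizing the resulting degree-four polynomial over $(\tfrac{1}{2},1)$ gives $\alpha=0.778937$ and $V_5(\text{Case }2)=0.0352697$, exactly as above.

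By Conjecture~\ref{conj2}(iii) an optimal set must be of one of the two forms above, and being optimal it must minimize the cost within its family; hence its cost is either $V_5(\text{Case }1)$ or $V_5(\text{Case }2)$. Since $V_5(\text{Case }2)<V_5(\text{Case }1)$, every optimal set is the Case 2 configuration attaining $V_5(\text{Case }2)$, namely the five-point set in the statement, and $V_5=0.0352697$. For uniqueness it then suffices to note that within Case 2 one may take $\alpha>\tfrac{1}{2}$ without loss of generality (the value $1-\alpha$ describes the same four-point set, the vertices of an axis-parallel square centered at $M$), that the Case 2 distortion polynomial has a unique global minimum on $(\tfrac{1}{2},1)$, and that the resulting five-point set is itself invariant under the quarter-turn about $M$, so that applying the symmetry group of $J$ to it produces no new configurations --- in contrast with the three-means case, where the optimal set was not quarter-turn invariant and four optimal sets appeared.

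I expect the only real labor to be in the Case 2 step: evaluating the piecewise integral of $\min(\|x-M\|^2,\|x-P\|^2)$ over $[\tfrac{1}{2},1]^2$ in three pieces (for $x_1\ge\alpha$ the whole vertical strip lies nearer $P$), reducing it to the degree-four polynomial in $\alpha$, and checking by elementary calculus that its global minimum on the admissible interval is at $\alpha=0.778937$ --- bookkeeping-heavy but routine. Once that is in place, the strict inequality $V_5(\text{Case }2)<V_5(\text{Case }1)$ and the symmetry argument above close the proof.
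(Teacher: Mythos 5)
Your proposal follows essentially the same route as the paper: invoke Conjecture~\ref{conj2}(iii) to reduce to the two one-parameter families, compute the per-cell distortion as a quartic in $\alpha$ for each (triangle $MAB$ split at $x_1=\tfrac{1+\alpha}{2}$ in Case 1, quarter square $MM_1BM_2$ split by $x_2=-x_1+\tfrac{1+2\alpha}{2}$ in Case 2), minimize, and conclude from $V_5(\text{Case }2)<V_5(\text{Case }1)$. Your added remark on uniqueness via quarter-turn invariance is a small refinement the paper leaves implicit, but the argument is the same.
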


\section{Optimal quantization for a nonuniform distribution}
In this section, due to calculation simplicity we determine the optimal sets of $n$-means and the $n$th quantization errors for all $1\leq n\leq 4$ for a nonuniform distribution $P$ on the real line with probability density function (pdf) $f(x)$ given by
\[f(x)=\left\{\begin{array}{ccc}
\frac 25 & \te{ for } 0\leq x\leq \frac 12, \\
\frac 85 & \te{ for } \frac 12< x\leq 1, \\
 0  & \te{ otherwise}.
\end{array}\right.
\]
 Notice that for the given pdf $f(x)$, we have  $P([0, \frac 12])=\frac 1 5$ and $P([\frac 12, 1])=\frac 45$. First, we give the following lemma.

\begin{lemma}
Let $X$ be a random variable with distribution $P$. Let $E(X)$ and $V(X)$ represent the expected value and the variance of the random variable $X$. Then, $E(X)=\frac{13}{20}$ and $V(X)=\frac{73}{1200}$.
\end{lemma}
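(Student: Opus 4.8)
The plan is to compute $E(X)$ and $E(X^2)$ directly from the density $f$, and then recover $V(X)$ through the identity $V(X)=E(X^2)-(E(X))^2$. Since $f$ is piecewise constant with a jump at $x=\frac12$, every integral against $f$ splits into a sum of two elementary polynomial integrals: one over $[0,\frac12]$ with weight $\frac25$, and one over $[\frac12,1]$ with weight $\frac85$. As a preliminary sanity check one notes $\int_0^1 f(x)\,dx=\frac25\cdot\frac12+\frac85\cdot\frac12=\frac15+\frac45=1$, confirming that $f$ is a genuine pdf and that $P([0,\frac12])=\frac15$ and $P([\frac12,1])=\frac45$, as already observed in the text.

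First I would evaluate
\[
E(X)=\int_0^{1} x f(x)\,dx=\frac25\int_0^{1/2} x\,dx+\frac85\int_{1/2}^{1} x\,dx=\frac25\cdot\frac18+\frac85\cdot\frac38=\frac{1}{20}+\frac{3}{5}=\frac{13}{20}.
\]
Next, the same splitting gives
\[
E(X^2)=\frac25\int_0^{1/2} x^2\,dx+\frac85\int_{1/2}^{1} x^2\,dx=\frac25\cdot\frac{1}{24}+\frac85\cdot\frac{7}{24}=\frac{1}{60}+\frac{7}{15}=\frac{29}{60}.
\]
Finally, substituting into the variance identity,
\[
V(X)=E(X^2)-(E(X))^2=\frac{29}{60}-\Bigl(\frac{13}{20}\Bigr)^2=\frac{29}{60}-\frac{169}{400}=\frac{580-507}{1200}=\frac{73}{1200},
\]
which is the claimed value.

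There is no substantive obstacle here; the proof is entirely a matter of careful bookkeeping with fractions. The two points most prone to slips are the upper-segment integrals $\int_{1/2}^{1}x\,dx=\frac12\bigl(1-\frac14\bigr)=\frac38$ and $\int_{1/2}^{1}x^2\,dx=\frac13\bigl(1-\frac18\bigr)=\frac{7}{24}$, and the final reduction of $\frac{29}{60}-\frac{169}{400}$ over the common denominator $1200$. Assembling these elementary steps yields the lemma.
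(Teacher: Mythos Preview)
Your proof is correct and follows exactly the same approach as the paper: compute $E(X)$ and $E(X^2)$ by splitting the integral at $x=\tfrac12$, then use $V(X)=E(X^2)-(E(X))^2$. You simply spell out the intermediate arithmetic that the paper leaves implicit.
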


\begin{proof} $E(X)=\mathop{\int}\limits_0^{\frac 12} \frac 25 xdx+\mathop{\int}\limits_{\frac 12}^1 \frac 85 xdx=\frac{13}{20}$ and $E(X^2)=\mathop{\int}\limits_0^{\frac 12} \frac 25 x^2dx+\mathop{\int}\limits_{\frac 12}^1 \frac 85 x^2dx=\frac{29}{60}$ yielding $V(X)=E(X^2)-[E(X)]^2=\frac{29}{60}-(\frac{13}{20})^2=\frac{73}{1200}$, which is the lemma.
\end{proof}

\begin{remark} For any $a\in \D R$, $E(X-a)^2=V(X)+(a-E(X))^2$ which yields the fact that the optimal set of one-mean is the expected value $\frac{13}{20}$ and the corresponding quantization error is the variance $V(X)$ of the random variable $X$.
\end{remark}

\begin{prop}\label{prop43}
Let $P$ be the probability distribution with pdf $f(x)$. Then, $\set{\frac{11}{32}, \frac{25}{32}}$ forms a unique optimal set of two-means with quantization error $\frac{317}{15360}=0.020638$.
\end{prop}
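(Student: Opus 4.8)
The plan is to use Proposition~\ref{prop10} to collapse the optimization to a one-parameter problem. Suppose $\set{a_1,a_2}$ with $a_1<a_2$ is an optimal set of two-means; since $\te{supp}(P)=[0,1]$ we have $a_1,a_2\in[0,1]$, and by Proposition~\ref{prop10} the two Voronoi regions are the half-lines cut off by the midpoint $m:=\frac{a_1+a_2}{2}$, both carry positive $P$-measure (so $m\in(0,1)$), and $a_1=E(X\mid X\le m)$, $a_2=E(X\mid X\ge m)$. Hence $a_1=a_1(m)$ and $a_2=a_2(m)$ are determined by $m$ alone, and the single scalar equation $a_1(m)+a_2(m)=2m$ is forced. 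So it suffices to list all solutions $m\in(0,1)$ and then compare the distortions of the resulting configurations.

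First I would split into $m\le\frac12$ and $m\ge\frac12$, since $f$ is constant on each of $[0,\frac12]$ and $[\frac12,1]$. In each case I compute $P([0,m])$ and $\int_0^m x f(x)\,dx$ in closed form, using $\int_0^1 x f(x)\,dx=E(X)=\frac{13}{20}$ (from the preceding lemma) to read off the tail integral. On $(0,\frac12]$ this gives $a_1(m)=\frac m2$ and $a_2(m)=\frac{13/20-m^2/5}{1-2m/5}$, and on $[\frac12,1)$ it gives $a_1(m)=\frac{16m^2-3}{4(8m-3)}$ and $a_2(m)=\frac{1+m}{2}$. Substituting into $a_1(m)+a_2(m)=2m$ yields one quadratic per interval: $8m^2-30m+13=0$ on the left (roots $\frac12$ and $\frac{13}{4}$, the latter discarded as out of range) and $32m^2-34m+9=0$ on the right (roots $\frac12$ and $\frac{9}{16}$). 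Thus there are exactly two admissible stationary configurations, $m=\frac12$ giving $\set{\tfrac14,\tfrac34}$ and $m=\frac{9}{16}$ giving $\set{\tfrac{11}{32},\tfrac{25}{32}}$.

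To finish I would evaluate the distortion of each candidate via $V(P;\set{a_1,a_2})=E(X^2)-a_1^2\,P([0,m])-a_2^2\,P([m,1])$, with $E(X^2)=\frac{29}{60}$ and $P([0,\frac{9}{16}])=\frac3{10}$. This produces $V(P;\set{\tfrac14,\tfrac34})=\frac1{48}=\frac{320}{15360}$ and $V(P;\set{\tfrac{11}{32},\tfrac{25}{32}})=\frac{317}{15360}$. Since $\int x^2\,dP<\infty$ an optimal set exists, it must be one of these two, and $\frac{317}{15360}<\frac{320}{15360}$; hence $\set{\tfrac{11}{32},\tfrac{25}{32}}$ is the unique optimal set of two-means, with quantization error $\frac{317}{15360}=0.020638$.

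All of this is essentially bookkeeping; the only delicate point — and the easiest place to slip — is the treatment of the discontinuity of $f$ at $x=\frac12$. One must confirm that $m=\frac12$ really is Voronoi-consistent (it is: $\frac14=E(X\mid X\le\frac12)$, $\frac34=E(X\mid X\ge\frac12)$, with midpoint $\frac12$), and one must check that the two quadratics together exhaust all stationary $m\in(0,1)$, so that no competing configuration is missed before the final comparison of costs.
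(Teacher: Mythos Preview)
Your proof is correct, and it takes a genuinely different route from the paper's.

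The paper treats the distortion as a function of the pair $(a,b)$, uses the centroid identity only to note that $a<\tfrac{13}{20}<b$, then splits into three cases according to the position of $a$ and $\tfrac{a+b}{2}$ relative to $\tfrac12$, writes out and minimizes the explicit two-variable distortion integral in each case, and finally compares the three case minima. You instead push Proposition~\ref{prop10} further: since each quantizer must be the conditional mean of its Voronoi half-line, the whole configuration is determined by the single midpoint $m$, and the self-consistency $a_1(m)+a_2(m)=2m$ becomes a quadratic on each of $(0,\tfrac12]$ and $[\tfrac12,1)$. This yields exactly the two Voronoi-consistent configurations $\{\tfrac14,\tfrac34\}$ and $\{\tfrac{11}{32},\tfrac{25}{32}\}$, and existence of an optimizer then forces it to be one of these. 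Your distortion identity $V=E(X^2)-a_1^2P([0,m])-a_2^2P([m,1])$ is valid precisely because the $a_i$ are conditional means, which holds for both candidates. The paper's approach is more hands-on and produces an extra (boundary-constrained, non-centroidal) candidate $\{\tfrac12,\tfrac56\}$ that is quickly discarded; your approach is cleaner and more structural, at the cost of leaning more heavily on the necessary conditions of Proposition~\ref{prop10}. Your caution about the junction $m=\tfrac12$ is well placed: it is the common root of both quadratics, and you correctly verify that it is genuinely Voronoi-consistent rather than a spurious boundary artifact.
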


\begin{proof} Let $\ga:=\set{a, b}$ be an optimal set of two-means such that $a<b$. Since the optimal quantizers are the centroids of their own Voronoi regions, we have $0<a<b<1$. By the properties of centroids, we have
\[a P(M(a|\ga))+b P(M(b|\ga))=E(X)=\frac{13}{20}=0.65,\]
which implies that the two optimal quantizers $a$ and $b$ lie in the opposite sides of the point $\frac{13}{20}$.
Thus, the following three cases can arise:

Case~1. $\frac 12 \leq a<b<1$.

Then, the distortion error is given by
\begin{align*}
&\int_0^{\frac{1}{2}} \frac{2}{5} (x-a)^2 \, dx+\int_{\frac{1}{2}}^{\frac{a+b}{2}} \frac{8}{5} (x-a)^2 \, dx+\int_{\frac{a+b}{2}}^1 \frac{8}{5} (x-b)^2 \, dx\\
& =\frac{2 a^3}{5}+\frac{2 a^2 b}{5}-\frac{3 a^2}{5}-\frac{2 a b^2}{5}+\frac{3 a}{10}-\frac{2 b^3}{5}+\frac{8 b^2}{5}-\frac{8 b}{5}+\frac{29}{60},
\end{align*}
which is minimum when $a=\frac 12$ and $b=\frac 56$, and the minimum value is $\frac{13}{540}=0.0240741$.

Case~2. $0<a\leq \frac 12\leq \frac{a+b}{2}<b<1$.

Then, the distortion error is given by
\begin{align*}
&\int_0^{\frac{1}{2}} \frac{2}{5} (x-a)^2  dx+\int_{\frac{1}{2}}^{\frac{a+b}{2}} \frac{8}{5} (x-a)^2  dx+\int_{\frac{a+b}{2}}^1 \frac{8}{5} (x-b)^2  dx\\
& =\frac{2 a^3}{5}+\frac{2 a^2 b}{5}-\frac{3 a^2}{5}-\frac{2 a b^2}{5}+\frac{3 a}{10}-\frac{2 b^3}{5}+\frac{8 b^2}{5}-\frac{8 b}{5}+\frac{29}{60},
\end{align*}
which is minimum when $a=\frac{11}{32}$ and $b=\frac{25}{32}$, and the minimum value is $\frac{317}{15360}=0.020638$.

Case~3. $0<a<\frac{a+b}{2}\leq \frac 12\leq b<1$.

Then, the distortion error is given by
\begin{align*}
&\int_0^{\frac{a+b}{2}} \frac{2}{5} (x-a)^2  dx+\int_{\frac{a+b}{2}}^{\frac{1}{2}} \frac{2}{5} (x-b)^2  dx+\int_{\frac{1}{2}}^1 \frac{8}{5} (x-b)^2  dx\\
&=\frac{a^3}{10}+\frac{a^2 b}{10}-\frac{a b^2}{10}-\frac{b^3}{10}+b^2-\frac{13 b}{10}+\frac{29}{60},
\end{align*}
which is minimum when $a=\frac{1}{4}$ and $b=\frac{3}{4}$, and the minimum value is $\frac{1}{48}=0.0208333$.

Comparing the distortion errors in all the above possible cases, we see that $\set{\frac{11}{32}, \frac{25}{32}}$ forms a unique optimal set of two-means with quantization error $\frac{317}{15360}=0.020638$. Thus, the proposition is yielded.
\end{proof}

\begin{prop} \label{prop44}
Let $P$ be the probability distribution with pdf $f(x)$. Then, \\
$\set{0.200339, 0.601018,  0.867006}$ forms a unique optimal set of three-means with quantization error $0.00739237$.
\end{prop}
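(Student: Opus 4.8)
The plan is to imitate the argument of Proposition~\ref{prop43}. Let $\ga:=\set{a_1,a_2,a_3}$ with $a_1<a_2<a_3$ be an optimal set of three-means. By Proposition~\ref{prop10} the Voronoi regions are the intervals $[0,m_1]$, $[m_1,m_2]$, $[m_2,1]$, where $m_1:=\frac{a_1+a_2}{2}$ and $m_2:=\frac{a_2+a_3}{2}$; each has positive $P$-measure, and each $a_i$ is the conditional mean of $X$ over its own region. Since $X$ restricted to an interval of positive length inside $[0,1]$ has mean strictly interior to that interval, one gets $0<a_1<m_1<a_2<m_2<a_3<1$. The only point of discontinuity of $f$, namely $\frac12$, therefore lies in exactly one of these three intervals (or, in the degenerate situations, on $\set{m_1}$ or $\set{m_2}$), which splits the problem into the three cases (i) $m_1\geq\frac12$, (ii) $m_1\leq\frac12\leq m_2$, (iii) $m_2\leq\frac12$. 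In each case two of the three Voronoi regions lie inside a single constant-density piece $[0,\frac12]$ or $[\frac12,1]$, so for those two the identity ``$a_i=$ centroid of its region'' is linear in $a_1,a_2,a_3$, while the third (straddling) region contributes the remaining equation.

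I would treat case (ii) first, since it contains the claimed answer. Here $[0,m_1]\ci[0,\frac12]$ forces $a_1=\frac12 m_1$, i.e.\ $a_2=3a_1$, while $[m_2,1]\ci[\frac12,1]$ forces $a_3=\frac12(m_2+1)$, i.e.\ $a_3=a_1+\frac23$; hence $m_1=2a_1$, $m_2=2a_1+\frac13$, and the case constraints $m_1\le\frac12\le m_2$ become $a_1\in[\frac1{12},\frac14]$. Imposing the centroid condition on the straddling region $[m_1,m_2]$ (density $\frac25$ on $[m_1,\frac12]$, density $\frac85$ on $[\frac12,m_2]$) and clearing denominators reduces, after routine algebra, to
\[864\,a_1^2-228\,a_1+11=0,\]
whose roots are $\frac{19\pm\sqrt{97}}{144}$; only $a_1=\frac{19+\sqrt{97}}{144}=0.200339\ldots$ lies in $[\frac1{12},\frac14]$ (the other root is $<\frac1{12}$), and it gives $a_2=3a_1=0.601018\ldots$ and $a_3=a_1+\frac23=0.867006\ldots$. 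Evaluating the three piecewise-polynomial distortion integrals at these values yields distortion error $0.00739237\ldots$.

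For the remaining two cases I would run the same reduction and check that neither produces an admissible critical configuration. In case (iii), $[0,m_1]$ and $[m_1,m_2]$ lie in $[0,\frac12]$, giving $a_2=3a_1$, $a_3=5a_1$, hence $m_2=4a_1$ and the constraint $a_1\le\frac18$; the centroid condition on $[m_2,1]$ reduces to $96\,a_1^2-100\,a_1+13=0$, whose roots both exceed $\frac18$, so there is no solution. In case (i), $[m_1,m_2]$ and $[m_2,1]$ lie in $[\frac12,1]$, giving $2a_2=a_1+a_3$ and $a_3=\frac13(a_2+2)$, hence $a_2=\frac15(3a_1+2)$, $m_1=\frac15(4a_1+1)$, and the constraint $a_1\ge\frac38$; the centroid condition on $[0,m_1]$ reduces to $384\,a_1^2-268\,a_1+59=0$, which has negative discriminant, so again no solution. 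Finally, the two boundary possibilities ($m_1=\frac12$, which forces $a_1=\frac14$, $a_2=\frac34$; and $m_2=\frac12$, which forces $a_3=\frac34$, $a_2=\frac14$) are eliminated by a direct check: in each one the centroid condition on the adjacent interior Voronoi region cannot hold (it would force a coordinate outside $(0,1)$ or unequal to its prescribed value). Since an optimal set of three-means exists and, by Proposition~\ref{prop10}, satisfies all three centroid conditions, it must be the configuration of case (ii); in particular it is unique, which is the assertion.

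The only genuinely delicate point beyond bookkeeping is this exhaustiveness step — being sure that every configuration satisfying all three centroid conditions falls in one of the three open cases or on one of the two boundary surfaces, and that the spurious algebraic roots are correctly discarded by the case constraints on $m_1,m_2$ relative to $\frac12$ together with the ordering $a_1<a_2<a_3$. The rest is mechanical: each distortion is a piecewise cubic in $(a_1,a_2,a_3)$, and within each case one only has to track which density piece each sub-interval of integration belongs to, reduce to the displayed quadratic, and evaluate.
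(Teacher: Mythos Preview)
Your argument is correct, and it is organised differently from the paper's proof in a way worth noting. The paper splits into five cases according to where each of $a_1,a_2,a_3,m_1,m_2$ falls relative to $\tfrac12$ (its Cases~2--3 and Cases~4--5 in fact share the same distortion integrand, so this is finer than necessary), writes the full distortion as a cubic in three variables in each case, minimises it there, and then compares the five minima. You instead keep only the three cases governed by the position of $\tfrac12$ relative to $m_1,m_2$, use the centroid conditions on the two non-straddling Voronoi cells to reduce immediately to a one-parameter family, and then impose the remaining centroid condition to obtain a single quadratic in $a_1$. This is cleaner: it turns the three-variable minimisation into solving one quadratic per case, and the exhaustiveness step (every stationary configuration must satisfy all three centroid conditions, hence lies on one of your three curves) gives uniqueness directly rather than by comparing five numerical values. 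The algebra checks out in each case: $864a_1^2-228a_1+11=0$ with the admissible root $a_1=(19+\sqrt{97})/144$ in case~(ii), $96a_1^2-100a_1+13=0$ with both roots above $\tfrac18$ in case~(iii), and $384a_1^2-268a_1+59=0$ with negative discriminant in case~(i). Your separate boundary discussion for $m_1=\tfrac12$ and $m_2=\tfrac12$ is in fact already absorbed by the limiting behaviour of the case~(ii) quadratic (it does not vanish at $a_1=\tfrac14$ or $a_1=\tfrac1{12}$), but treating it explicitly does no harm.
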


\begin{proof} Let $\ga:=\set{a, b, c}$ be an optimal set of three-means.  Since the optimal quantizers are the centroids of their own Voronoi regions, we have $0<a<b<c<1$. By the properties of centroids, we have
\[a P(M(a|\ga))+b P(M(b|\ga))+cP(M(c|\ga))=E(X)=\frac{13}{20}=0.65,\]
which implies that all the optimal quantizers can not lie in one side of the point $\frac{13}{20}$. Thus, the following cases can arise:

Case~1. $0<a<\frac{a+b}{2}<b<\frac {b+c}{2}\leq \frac 12 \leq c<1$.

Then, the distortion error is given by
\begin{align*}
&\int_0^{\frac{a+b}{2}} \frac{2}{5} (x-a)^2  dx+\int_{\frac{a+b}{2}}^{\frac{b+c}{2}} \frac{2}{5}  (x-b)^2  dx+\int_{\frac{b+c}{2}}^{\frac{1}{2}} \frac{2}{5} (x-c)^2  dx+\int_{\frac{1}{2}}^1 \frac{8}{5} (x-c)^2  dx\\
& =\frac{a^3}{10}+\frac{a^2 b}{10}-\frac{a b^2}{10}+\frac{b^2 c}{10}-\frac{b c^2}{10}-\frac{c^3}{10}+c^2-\frac{13 c}{10}+\frac{29}{60},
\end{align*}
which is minimum when $a=0.0874081, \, b=0.262224$, and $c=0.737776$, and the minimum value is $0.0188458$.

Case~2.  $0<a<\frac{a+b}{2}<b\leq \frac{1}{2}\leq \frac{b+c}{2}<c<1$.

Then, the distortion error is given by
\begin{align*}
&\int_0^{\frac{a+b}{2}} \frac{2}{5} (x-a)^2 \, dx+\int_{\frac{a+b}{2}}^{\frac{1}{2}} \frac{2}{5} (x-b)^2 \, dx+\int_{\frac{1}{2}}^{\frac{b+c}{2}} \frac{8}{5} (x-b)^2 \, dx+\int_{\frac{b+c}{2}}^1 \frac{8}{5} (x-c)^2 \, dx\\
& =\frac{a^3}{10}+\frac{a^2 b}{10}-\frac{a b^2}{10}+\frac{3 b^3}{10}+\frac{2 b^2 c}{5}-\frac{3 b^2}{5}-\frac{2 b c^2}{5}+\frac{3 b}{10}-\frac{2 c^3}{5}+\frac{8 c^2}{5}-\frac{8 c}{5}+\frac{29}{60},
\end{align*}
which is minimum when $a=\frac 1{6}, \, b=\frac 1{2}$, and $c=\frac 56$, and the minimum value is $\frac{1}{108}=0.00925926$.

Case~3.  $0<a<\frac{a+b}{2}\leq \frac{1}{2}\leq b< \frac{b+c}{2}<c<1$.

Then, the distortion error is given by
\begin{align*}
&\int_0^{\frac{a+b}{2}} \frac{2}{5} (x-a)^2 \, dx+\int_{\frac{a+b}{2}}^{\frac{1}{2}} \frac{2}{5} (x-b)^2 \, dx+\int_{\frac{1}{2}}^{\frac{b+c}{2}} \frac{8}{5} (x-b)^2 \, dx+\int_{\frac{b+c}{2}}^1 \frac{8}{5} (x-c)^2 \, dx\\
& =\frac{a^3}{10}+\frac{a^2 b}{10}-\frac{a b^2}{10}+\frac{3 b^3}{10}+\frac{2 b^2 c}{5}-\frac{3 b^2}{5}-\frac{2 b c^2}{5}+\frac{3 b}{10}-\frac{2 c^3}{5}+\frac{8 c^2}{5}-\frac{8 c}{5}+\frac{29}{60},
\end{align*}
which is minimum when $a=0.200339$, $b=0.601018$, and $c=0.867006$, and the minimum value is $0.00739237$.

Case~4.  $0<a\leq \frac{1}{2}\leq \frac{a+b}{2}<b< \frac{b+c}{2}<c<1$.

Then, the distortion error is given by
\begin{align*}
&\int_0^{\frac{1}{2}} \frac{2}{5} (x-a)^2 \, dx+\int_{\frac{1}{2}}^{\frac{a+b}{2}} \frac{8}{5} (x-a)^2 \, dx+\int_{\frac{a+b}{2}}^{\frac{b+c}{2}} \frac{8}{5} (x-b)^2 \, dx+\int_{\frac{b+c}{2}}^1 \frac{8}{5} (x-c)^2 \, dx\\
& =\frac{2 a^3}{5}+\frac{2 a^2 b}{5}-\frac{3 a^2}{5}-\frac{2 a b^2}{5}+\frac{3 a}{10}+\frac{2 b^2 c}{5}-\frac{2 b c^2}{5}-\frac{2 c^3}{5}+\frac{8 c^2}{5}-\frac{8 c}{5}+\frac{29}{60},
\end{align*}
which is minimum when $a=0.325207$, $b=0.674793$, and $c=0.891598$, and the minimum value is $0.0101842$.

Case~5.  $0<\frac 12 \leq a <\frac{a+b}{2}<b< \frac{b+c}{2}<c<1$.

Then, the distortion error is given by
\begin{align*}
&\int_0^{\frac{1}{2}} \frac{2}{5} (x-a)^2 \, dx+\int_{\frac{1}{2}}^{\frac{a+b}{2}} \frac{8}{5} (x-a)^2 \, dx+\int_{\frac{a+b}{2}}^{\frac{b+c}{2}} \frac{8}{5} (x-b)^2 \, dx+\int_{\frac{b+c}{2}}^1 \frac{8}{5} (x-c)^2 \, dx\\
& =\frac{2 a^3}{5}+\frac{2 a^2 b}{5}-\frac{3 a^2}{5}-\frac{2 a b^2}{5}+\frac{3 a}{10}+\frac{2 b^2 c}{5}-\frac{2 b c^2}{5}-\frac{2 c^3}{5}+\frac{8 c^2}{5}-\frac{8 c}{5}+\frac{29}{60},
\end{align*}
which is minimum when $a=\frac 12$, $b=\frac 7{10}$, and $c=\frac 9{10}$, and the minimum value is $\frac{29}{1500}=0.0193333$.

Comparing the distortion errors in all the above possible cases, we see that \\
$\set{0.200339, 0.601018,  0.867006}$ forms a unique optimal set of three-means with quantization error $0.00739237$. Hence, the proof of the proposition is complete.
\end{proof}

\begin{prop} \label{prop45}
Let $P$ be the probability distribution with pdf $f(x)$. Then, $\set{\frac{59}{332}, \frac{177}{332},   \frac{239}{332},  \frac{301}{332}}$ forms a unique optimal set of four-means with quantization error $\frac{1465}{330672}=0.00443037$.
\end{prop}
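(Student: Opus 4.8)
The plan is to follow the template of Propositions~\ref{prop43} and~\ref{prop44}. Let $\ga:=\set{a, b, c, d}$ with $a<b<c<d$ be an optimal set of four-means. By Proposition~\ref{prop10} each of $a,b,c,d$ is the centroid of its own Voronoi region, and these four regions tile $[0,1]$ with positive probability, so $0<a<b<c<d<1$; the Voronoi boundaries are the midpoints $p_1:=\frac{a+b}2$, $p_2:=\frac{b+c}2$, $p_3:=\frac{c+d}2$, giving Voronoi regions $[0,p_1]$, $[p_1,p_2]$, $[p_2,p_3]$, $[p_3,1]$. The centroid identity $aP(M(a|\ga))+bP(M(b|\ga))+cP(M(c|\ga))+dP(M(d|\ga))=E(X)=\frac{13}{20}$ shows that the four means do not all lie to the left of $\frac{13}{20}$; using in addition that $[\frac12,1]$ carries mass $\frac45$ (so the centroid of any Voronoi region containing $[\frac12,1]$ is at least $\frac35$), one checks that $d>\frac12$ in every admissible configuration.

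Next I would enumerate cases according to which of the four Voronoi regions contains the density jump point $\frac12$, and, within that, whether the generator of that region lies to the left or to the right of $\frac12$; exactly as in Proposition~\ref{prop44} this yields a finite list of cases, the rightmost region contributing only the subcase with its generator exceeding $\frac12$. In each case $V(P;\ga)$ is an explicit polynomial in $a,b,c,d$, obtained by splitting the four Voronoi integrals at $\frac12$ and assigning the density $\frac25$ or $\frac85$ to each resulting piece. I would then minimize: differentiating under the integral sign the endpoint terms cancel because the integrand is continuous across each region boundary, so the stationarity conditions say precisely that each of $a,b,c,d$ equals the centroid of its region. In the case where $\frac12$ falls in the second Voronoi region $[p_1,p_2]$ with $a<\frac12<b$, these read $p_1=2a$, $b$ equals the $f$-weighted centroid of $[p_1,p_2]$, $2c=b+d$, $3d=c+2$, a triangular system whose unique solution is $\set{\frac{59}{332},\frac{177}{332},\frac{239}{332},\frac{301}{332}}$, with distortion $\frac{1465}{330672}=0.00443037$.

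Finally I would record the minimum distortion in each of the remaining cases and verify that each exceeds $\frac{1465}{330672}$; since the admissible configurations are covered by the case list, this establishes optimality. For uniqueness I would note that in the winning case the stationary point above is the only solution of the centroid system and lies in the interior of the parameter region defining that case, so the constrained minimum is attained there rather than on a boundary already accounted for by a neighbouring case, and that the distortion polynomial is strictly convex near it; since the density $f$ possesses no nontrivial symmetry, there is no second optimal set.

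The main obstacle is not a single deep idea but the organization: one must check that the case list is genuinely exhaustive and, in each case, that the critical point produced by the centroid equations actually satisfies the strict inequalities defining that case — otherwise the constrained minimum lies on the interface between two cases and must be shown to be recaptured by the adjacent one — and one must confirm that the stationary point in the claimed optimal case is a true global minimum of its polynomial rather than merely a critical point, together with dispatching the measure-zero degenerate configurations in which $\frac12$ coincides with some $a,b,c,d$ or some $p_i$.
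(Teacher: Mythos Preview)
Your plan is correct and would work, but the paper actually departs here from the exhaustive case-enumeration template of Propositions~\ref{prop43} and~\ref{prop44} that you propose to follow. Instead of minimizing the distortion polynomial separately over every case determined by the position of $\tfrac12$ relative to the $a_i$ and the midpoints, the paper first computes the candidate $\set{\tfrac{59}{332},\tfrac{177}{332},\tfrac{239}{332},\tfrac{301}{332}}$ and its distortion $\tfrac{1465}{330672}$, thereby obtaining the a priori bound $V_4\le 0.00443037$, and then uses this bound to \emph{eliminate} the remaining configurations by cruder lower-bound estimates rather than by full optimization. For example, $a_3\le\tfrac12$ is ruled out because already $\int_{1/2}^1\min_{a\in\{1/2,a_4\}}\tfrac85(x-a)^2\,dx\ge\tfrac{1}{135}>V_4$; similarly $a_1\ge\tfrac{21}{64}$ is excluded by a single one-term integral, and $\tfrac12\le\tfrac12(a_1+a_2)$ by a two-term estimate. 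This reduces the full optimization to essentially the single winning case.

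What each approach buys: your enumeration is conceptually uniform with the earlier propositions and, once carried out, yields the comparison values in every case; its cost is the bookkeeping you flag at the end --- checking that constrained minima on case boundaries are picked up by adjacent cases, and that each interior critical point is a genuine minimum. The paper's bound-then-eliminate argument is shorter and avoids most of that bookkeeping, at the price of needing a few ad hoc inequalities; it also scales better, which is why the paper remarks that the same method, ``though cumbersome,'' extends to $n=5,6,7$.
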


\begin{proof}
Let $\set{a_1, a_2, a_3, a_4}$ be an optimal set of four-means such that $a_1<a_2<a_3<a_4$. Since the optimal quantizers are the centroids of their own Voronoi regions, we have $0<a_1<a_2<a_3<a_4<1$. As in the previous two propositions, all the optimal quantizers can not lie in one side of the point $\frac {13}{20}$. Thus, $\frac 12<\frac{13}{20}\leq a_4$. We show that $0<a_1<\frac 12(a_1+a_2)< \frac 12<a_2<a_3<a_4<1$. If $0<a_1<\frac 12(a_1+a_2)< \frac 12<a_2<a_3<a_4<1$, then the distortion error is
\begin{align*}
&\int_0^{\frac{a_1+a_2}{2}} \frac{2}{5} (x-a_1)^2  dx+\int_{\frac{a_1+a_2}{2}}^{\frac{1}{2}} \frac{2}{5}  (x-a_2)^2 dx+\int_{\frac{1}{2}}^{\frac{a_2+a_3}{2}} \frac{8}{5} (x-a_2)^2 dx+\int_{\frac{a_2+a_3}{2}}^{\frac{a_3+a_4}{2}} \frac{8}{5} (x-a_3)^2  dx\\
&\qquad +\int_{\frac{a_3+a_4}{2}}^1 \frac{8}{5} (x-a_4)^2  dx\\
&=\frac{a_1^3}{10}+\frac{a_1^2 a_2}{10}-\frac{a_1 a_2^2}{10}+\frac{3 a_2^3}{10}+\frac{2 a_2^2 a_3}{5}-\frac{3 a_2^2}{5}-\frac{2 a_2 a_3^2}{5}+\frac{3 a_2}{10}+\frac{2 a_3^2 a_4}{5}-\frac{2 a_3 a_4^2}{5}-\frac{2 a_4^3}{5}\\
&\qquad +\frac{8 a_4^2}{5}-\frac{8 a_4}{5}+\frac{29}{60}.
\end{align*}
Now, obtain the partial derivatives of the above expression with respect to $a_1$, $a_2$, $a_3$, and $a_4$, and equating them to zero, we obtain a system of four equations. Solving the system of four equations, we obtain
$\set{a=\frac{59}{332},  b= \frac{177}{332},  c= \frac{239}{332},  d= \frac{301}{332}}$ with distortion error $\frac{1465}{330672}=0.00443037$. Since $V_4$ is the quantization error for four-means, we have $V_4\leq 0.00443037$. If $a_3\leq \frac 12$, then
\[V_4\geq \int_{\frac 12}^1 \min_{a\in \set{\frac 12, a_4}}\frac 85(x-a)^2 dx=-\frac{2 a_4^3}{5}+\frac{7 a_4^2}{5}-\frac{3 a_4}{2}+\frac{31}{60},\]
which is minimum when $a_4=\frac 56$, and the minimum value is $\frac{1}{135}=0.00740741>V_4$ which yields a contradiction. So, we can assume that $\frac 12<a_3$. We now show that $\frac 12 <a_2$. For the sake of contradiction, assume that $a_2\leq \frac 12$. Then, the following two cases can arise:

Case~1. $\frac 12 (a_2+a_3)\leq \frac 12<a_3$.

Then, the distortion error is given by
\begin{align*}
&\int_0^{\frac{a_1+a_2}{2}} \frac{2}{5}  (x-a_1)^2 dx+\int_{\frac{a_1+a_2}{2}}^{\frac{a_2+a_3}{2}} \frac{2}{5}  (x-a_2)^2  dx+\int_{\frac{a_2+a_3}{2}}^{\frac{1}{2}} \frac{2}{5}  (x-a_3)^2 dx+\int_{\frac{1}{2}}^{\frac{a_3+a_4}{2}} \frac{8}{5} (x-a_3)^2 dx\\
& \qquad +\int_{\frac{a_3+a_4}{2}}^1 \frac{8}{5} (x-a_4)^2  dx\\
&=\frac{a_1^3}{10}+\frac{a_1^2 a_2}{10}-\frac{a_1 a_2^2}{10}+\frac{a_2^2 a_3}{10}-\frac{a_2 a_3^2}{10}+\frac{3 a_3^3}{10}+\frac{2 a_3^2 a_4}{5}-\frac{3 a_3^2}{5}-\frac{2 a_3 a_4^2}{5}+\frac{3 a_3}{10}-\frac{2 a_4^3}{5}+\frac{8 a_4^2}{5}-\frac{8 a_4}{5}\\
& \qquad +\frac{29}{60}.
\end{align*}
Now, obtain the partial derivatives of the above expression with respect to $a_1$, $a_2$, $a_3$, and $a_4$, and equating them to zero, we obtain a system of four equations. Solving the system of four equations, we have
$\set{a_1= \frac{1}{8}, a_2= \frac{3}{8}, a_3= \frac{5}{8}, a_4=\frac{7}{8}}$ with distortion error $\frac{1}{192}=0.00520833>V_4$ which leads to a contradiction.

Case 2. $a_2\leq \frac 12\leq \frac 12(a_2+a_3)$.

Then, the distortion error is given by
\begin{align*}
&\int_0^{\frac{a_1+a_2}{2}} \frac{2}{5} (x-a_1)^2 \, dx+\int_{\frac{a_1+a_2}{2}}^{\frac{1}{2}} \frac{2}{5} (x-a_2)^2 \, dx+\int_{\frac{1}{2}}^{\frac{a_2+a_3}{2}} \frac{8}{5} (x-a_2)^2 \, dx+\int_{\frac{a_2+a_3}{2}}^{\frac{a_3+a_4}{2}} \frac{8}{5} (x-a_3)^2 \, dx\\
& \qquad +\int_{\frac{a_3+a_4}{2}}^1 \frac{8}{5} (x-a_4)^2 \, dx\\
&=\frac{a_1^3}{10}+\frac{a_1^2 a_2}{10}-\frac{a_1 a_2^2}{10}+\frac{3 a_2^3}{10}+\frac{2 a_2^2 a_3}{5}-\frac{3 a_2^2}{5}-\frac{2 a_2 a_3^2}{5}+\frac{3 a_2}{10}+\frac{2 a_3^2 a_4}{5}-\frac{2 a_3 a_4^2}{5}-\frac{2 a_4^3}{5}\\
& \qquad +\frac{8 a_4^2}{5}-\frac{8 a_4}{5}+\frac{29}{60}
\end{align*}
which is minimum when $a_1= \frac{1}{6}$, $a_2=\frac{1}{2}$, $a_3=\frac{7}{10}$,  $a_4= \frac{9}{10}$, and the minimum value is $\frac{61}{13500}=0.00451852>V_4$, which is a contradiction.

By Case~1 and Case~2, we can assume that $\frac 12<a_2$. If $\frac {21}{64}\leq a_1$, then
\[V_4\geq \int_0^{\frac {21}{64}} \frac{2}{5} (x-\frac {21}{64})^2 \, dx=\frac{3087}{655360}=0.00471039>V_4,\]
which leads to a contradiction, and so $a_1<\frac {21}{64}$. We now show that $\frac 12(a_1+a_2)<\frac 12$. For the sake of contradiction, assume that
$\frac 12\leq \frac 12(a_1+a_2)$. Then, $a_2\geq 1-a_1>1-\frac {21}{64}=\frac{43}{64}$, and so
\begin{align*}
V_4&\geq \int_0^{\frac{1}{2}} \frac{2}{5} (x-a_1)^2 \, dx+\int_{\frac{1}{2}}^{\frac{a_1+a_2}{2}} \frac{8}{5} (x-a_2)^2 \, dx+\int_{\frac{a_1+a_2}{2}}^{a_2} \frac{8}{5} (x-a_2)^2 \, dx\\
&=\frac{7 a_1^3}{15}+\frac{a_1^2 a_2}{5}-\frac{3 a_1^2}{5}-\frac{a_1 a_2^2}{5}-\frac{1}{15} (a_1-a_2)^3+\frac{3 a_1}{10}+\frac{a_2^3}{15}-\frac{1}{20},
\end{align*}
which is minimum when $a_1=\frac {21}{64}$ and $a_2=\frac{43}{64}$ ($P$-almost surely), and the minimum value is $\frac{3979}{491520}=0.0080953>V_4$, which gives a contradiction. Hence, we can assume that $\frac 12(a_1+a_2)<\frac 12$. Therefore, we can conclude that $0<a_1<\frac 12(a_1+a_2)< \frac 12<a_2<a_3<a_4<1$ which gives the optimal set of four-means as $\set{\frac{59}{332}, \frac{177}{332},   \frac{239}{332},  \frac{301}{332}}$, and the corresponding quantization error is $\frac{1465}{330672}=0.00443037$. Thus, the proof of the proposition is complete.
\end{proof}

\begin{remark} Proceeding similarly as the proof of Proposition~\ref{prop45}, though cumbersome, one can obtain the optimal sets of $n$-means for the probability distribution $P$ with density function $f(x)$ for $n=5, 6, 7$, etc. The problem of two uniform regions of different densities has two parameters, the ratio of the densities and the ratio of the lengths. The general formula to obtain an optimal set of $n$-means, for any $n\in \D N$, in this direction is still not known.
\end{remark}

\nd\tbf{Acknowledgement:} The author is grateful to Professor Carl P. Dettmann of University of Bristol for helpful discussions.

\end{document}